\newtheorem{thm}{Theorem}[section]
\newtheorem{thmx}{Theorem}
\newtheorem{lem}[thm]{Lemma}
\newtheorem{prop}[thm]{Proposition}
\newtheorem{cor}[thm]{Corollary}
\newenvironment{proof}{\noindent\emph{Proof.}}{\hfill$\square$\medskip}
\newenvironment{rmk}{\medskip\noindent\emph{Remark.}}{\medskip}
\newcommand{\M}[1]{\mathcal{#1}}
\newcommand{\sph}{\mathbb{S}}
\newcommand{\de}{\partial}
\newcommand{\D}{\Delta}
\newcommand{\vp}{\varphi}
\newcommand{\R}{\mathbb{R}}
\newcommand{\ve}{\varepsilon}
\newcommand{\cl}[1]{\overline{#1}}
\renewcommand{\(}{\left(}
\renewcommand{\)}{\right)}
\DeclareMathOperator{\av}{av}
\DeclareMathOperator{\loc}{loc}
\author{Ali Hyder\thanks{The author is partially supported by SERB SRG/2022/001291} \\ {\small Tata Institute of Fundamental Research}\\ {\small \texttt{hyder@tifrbng.res.in}} \and Luca Martinazzi\thanks{The author is partially supported by Fondazione CARIPLO and Fondazione CDP grant n. 2022-2118, and by the PRIN Project 2022PJ9EFL \emph{Geometric Measure Theory: Structure of Singular Measures, Regularity Theory and Applications in the Calculus of Variations}.}\\  {\small Sapienza, Universit\`a di Roma} \\ {\small \texttt{luca.martinazzi@uniroma1.it}}}
\title{One-dimensional half-harmonic maps into the circle and their degree}
\begin{document}

\maketitle

\begin{abstract}
Given a half-harmonic map $u\in \dot H^{\frac{1}{2},2}(\R,\sph^1)$ minimizing the fractional Dirichlet energy under Dirichlet boundary conditions in $\R\setminus I$, we show the existence of a second half-harmonic map, minimizing the fractional Dirichlet energy in a different homotopy class. This is based on the study of the degree of fractional Sobolev maps and a sharp estimate \`a la Brezis-Coron. We give examples showing that it is in general not possible to minimize in every homotopy class and show a contrast with the 2-dimensional case.
\end{abstract}

\section{Introduction}
Minimizing fractional Sobolev energies of maps between manifolds, as pioneered e.g. in \cite{mirpis}, leads naturally to the notion of (fractional) $s$-harmonic maps ($0<s<1$). The model case $s=\frac12$ (corresponding to the so called half-harmonic maps) was first studied by Da Lio and Rivi\`ere \cite{DLR,DLR2} as critical points of the $L^2$-norm of the $\tfrac14$-Laplacian. In this paper we will focus on the special case of half-harmonic maps from $\R$ into the circle $\sph^1$. More precisely, consider the space $\dot H^{\frac12,2}(\R, \mathbb{C})$ of measurable maps $u$ from $\R$ into the complex plane  such that 
$$[u]_{H^{\frac{1}{2},2}(\R)}^2:= \int_{\R}\int_{\R}\frac{|u(x)-u(\xi)|^2}{(x-\xi)^2}dxd\xi<\infty,$$
and its non-convex subset
$$\dot H^{\frac12,2}(\R, \sph^1) = \left\{u\in \dot H^{\frac12,2}(\R, \mathbb{C}): |u(x)|=1 \text{ for a.e. }x\in\R\right\},$$
endowed with the same seminorm. 

For such maps, following \cite{DLR}, we introduce the fractional energy
\begin{equation}\label{12energy}
E(u):=\int_{\R}\left|(-\Delta)^\frac14u\right|^2dx, \quad u\in \dot H^{\frac12,2}(\R, \sph^1).
\end{equation}
This energy can be characterized in different equivalent ways, as we shall briefly recall. First of all, for $ u\in \dot H^{\frac12,2}(\R, \mathbb{C})$, its Poisson harmonic extension is well defined on $\R^2_+:=\R\times(0,\infty)$ as
$$\tilde u(x,y):=\frac{1}{\pi}\int_{\R}\frac{yu(\xi)}{y^2+(x-\xi)^2}d\xi.$$
Notice that if $u$ takes values into $\sph^1$, $\tilde u$ takes values into the closure of the unit disk
$$D^2:=\{(x,y)\in \R^2:x^2+y^2<1\}.$$
Then we have (see the Appendix, \eqref{FL2} and \eqref{equivnorms2} in particular) 
\begin{equation}\label{equiv1}
\|(-\Delta)^\frac14 u\|_{L^2(\R)}^2= \int_{\R^2_+}|\nabla \tilde u|^2 dxdy =\frac{1}{2\pi}[u]_{H^{\frac{1}{2},2}(\R)}^2,\quad u\in \dot H^{\frac12,2}(\R, \mathbb{C}),
\end{equation}
and we can equivalently use any of these formulas to define $E(u)$. 
While the characterizations of $E(u)$ in terms of the extension $\tilde u$ or in terms of a double integral are more convenient to do estimates, the original definition of $E(u)$ as in \eqref{12energy} makes the analogy with harmonic maps more evident. Indeed, interpreting $(-\Delta)^\frac14 u$ as a half-derivative of $u$, $E(u)$ can be seen as the fractional analog for functions in $\dot H^{\frac{1}{2},2}(\R,\sph^1)$ of the classical Dirichlet integral
$$E_2(u):=\int_{\Omega}|\nabla u|^2 dxdy,\quad u\in H^{1,2}(\Omega,M),$$
where $\Omega\subset\R^2$, $M\subset\R^N$ is an embedded Riemannian manifold, and
$$H^{1,2}(\Omega,M):=\left\{u\in H^{1,2}(\Omega,\R^N): u(x)\in M \text{ for a.e. }x\in \Omega\right\}.$$
In this analogy,
an important property that $E$ shares with $E_2$, and that will play an important role is the conformal invariance:  
$$E(u)=E(u\circ \phi)\quad \text{for } u\in \dot H^{\frac12,2}(\R, \sph^1),$$
where $\phi:\R\to\R$ is the restriction of a conformal map sending $\R^2_+$ into itself and its boundary to itself.

Concerning $E_2$, one of the model problems in the calculus of variations is the study of its minimizers or, more generally, its critical points under suitable constraints, leading to harmonic maps (harmonic functions, in the special case $M=\R$). In particular, the following uniqueness question was raised by Giaquinta and Hildebrandt \cite{GH}: Consider $M=\sph^2$, $\Omega=D^2\subset\R^2$, $g\in C^\infty(\de D^2,\sph^2)$, and let $u_0$ be a minimizer of $E_2$ in
$$H^{1,2}_g(D^2, \sph^2):=\{u\in H^{1,2}(D^2, \sph^2): u|_{\partial D^2}=g\}.$$
Assume that $g$ is nonconstant. Does $E_2$ necessarily have a second critical point $\bar u\ne u_0$? The question was answered in the affirmative by Brezis and Coron, and independently by Jost \cite{jost}:

\begin{thmx}[Brezis-Coron \cite{bc}, Jost \cite{jost}]\label{thmBC} If $g\in C^\infty(\partial D^2,\sph^2)$ is not constant, then $E_2$ has at least two critical points in $H^{1,2}_g(D^2,\sph^2)$.
\end{thmx}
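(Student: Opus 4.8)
The plan is to follow the strategy of Brezis and Coron (and, independently, Jost). Starting from the given minimizer $u_0$, one isolates a second ``relative homotopy class'' of competitors in which $E_2$ still attains its infimum; the minimizer produced there is a harmonic map distinct from $u_0$. That this minimization is not obstructed by energy concentration is guaranteed by a sharp energy estimate, and it is exactly in this estimate that the nonconstancy of $g$ is used.

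First I would set up the relative degree. Any two maps $u,v\in H^{1,2}_g(D^2,\sph^2)$ agree on $\de D^2$, so gluing $u$ on an upper hemisphere with $v$, with reversed domain orientation, on a lower hemisphere yields a map $\sph^2\to\sph^2$ whose Brouwer degree equals $Q(u)-Q(v)\in\mathbb{Z}$, where $Q(w):=\frac{1}{4\pi}\int_{D^2}w\cdot(w_x\wedge w_y)\,dx\,dy$ is the normalized algebraic area swept out by $w$. Since $Q$ is continuous on $H^{1,2}_g(D^2,\sph^2)$ for the strong $H^{1,2}$ topology, the classes $\mathcal{C}_k:=\{u\in H^{1,2}_g(D^2,\sph^2):Q(u)-Q(u_0)=k\}$, $k\in\mathbb{Z}$, are open and closed, with $u_0\in\mathcal{C}_0$ and $E_2(u_0)=m_0:=\inf_{H^{1,2}_g}E_2$. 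If $m_1:=\inf_{\mathcal{C}_1}E_2$ is attained at some $\bar u$, then $\bar u$ is a local minimizer of $E_2$ in $H^{1,2}_g(D^2,\sph^2)$, hence a weakly harmonic map, and $\bar u\ne u_0$ because the two maps lie in different classes; the same works with $\mathcal{C}_{-1}$ in place of $\mathcal{C}_1$.

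Next I would prove the sharp estimate $m_1<m_0+8\pi$, or its analogue for $m_{-1}$, where $8\pi$ is the Dirichlet energy of a degree $\pm1$ harmonic two-sphere. Since $g$ is nonconstant, $u_0$ is nonconstant, so one can choose a point $p\in D^2$ where $u_0$ is ``sufficiently nondegenerate'' and, at a small scale $\la\to0$, glue into $u_0$ near $p$ a $\la$-rescaled and suitably target-rotated standard degree $\pm1$ bubble built from stereographic projection, obtaining a competitor $u_\la\in\mathcal{C}_{\pm1}$ with $E_2(u_\la)=m_0+8\pi-\delta_\la$ and $\delta_\la>0$. The strict gain $\delta_\la$ comes from the cross term between $\nabla u_0$ and the bubble in the cut-off computation, whose favourable sign is obtained by choosing the rotation of the target sphere; this is the precise point where $\nabla u_0\not\equiv0$ (i.e. $g$ nonconstant) is needed. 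In particular $\mathcal{C}_{\pm1}\ne\emptyset$ and $m_{\pm1}<m_0+8\pi$ for at least one sign, which I relabel as $+1$.

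Finally I would run the concentration-compactness analysis. Let $(u_j)\subset\mathcal{C}_1$ be a minimizing sequence for $E_2$. After extraction, $u_j\weak u_\infty$ weakly in $H^{1,2}$ with $u_\infty\in H^{1,2}_g(D^2,\sph^2)$, and by the bubbling theory for finite-energy maps into $\sph^2$ (Sacks--Uhlenbeck, Brezis--Coron) one gets the energy identity $m_1=E_2(u_\infty)+8\pi\sum_{i=1}^N|d_i|$ together with $u_\infty\in\mathcal{C}_{1-\sum_i d_i}$, where $d_i\in\mathbb{Z}\setminus\{0\}$ are the degrees of the $N$ bubbles that form. If $N\ge1$, then $E_2(u_\infty)\ge m_0$ and $8\pi\sum_i|d_i|\ge8\pi N\ge8\pi$, so $m_1\ge m_0+8\pi$, contradicting the sharp estimate. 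Hence $N=0$, $u_j\to u_\infty$ strongly in $H^{1,2}$, so $u_\infty\in\mathcal{C}_1$ with $E_2(u_\infty)=m_1$, and $\bar u:=u_\infty$ is the required second critical point. The main obstacle is the sharp estimate: building the comparison map and checking that the interaction correction is strictly negative is the only place where the hypothesis on $g$ genuinely enters (and the statement is indeed false for constant $g$); the compactness step, while technical, rests on the by-now standard $8\pi$-quantization of energy for harmonic maps from surfaces into $\sph^2$.
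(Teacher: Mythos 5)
This is a quoted theorem: the paper does not prove it, but cites \cite{bc} and \cite{jost}; what the paper does prove (Theorems \ref{mainthm}--\ref{mainthm2}) is the fractional analogue, by exactly the three-step scheme you describe. Your reconstruction is indeed the Brezis--Coron argument: relative degree $Q$ with $Q(u)-Q(v)\in\mathbb{Z}$ for maps sharing the boundary value, the classes $\mathcal{C}_k$ open and closed in the strong topology so that a minimizer in $\mathcal{C}_{\pm1}$ is a genuine critical point distinct from $u_0$, the strict estimate $\min(m_1,m_{-1})<m_0+8\pi$ obtained by grafting a rescaled, target-rotated degree-$\pm1$ bubble at a point where $\nabla u_0\ne 0$ (this is \cite[Lemma 2]{bc}, mirrored here by Lemma \ref{strictestimate} with $2\pi$ in place of $8\pi$), and finally the exclusion of concentration. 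So the architecture is right and matches both the original proof and the paper's fractional version.

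The one step I would push back on is the last. You invoke a Sacks--Uhlenbeck/Brezis--Coron energy identity $m_1=E_2(u_\infty)+8\pi\sum_i|d_i|$ for a \emph{minimizing sequence}, but bubbling with integer degrees and an exact energy identity is a statement about sequences of (almost-)harmonic maps, not about arbitrary minimizing sequences, which can lose energy in a non-quantized way; as stated this step is not something you can cite. What is actually available, and all that is needed, is the one-sided quantized lower semicontinuity: the functionals $F_\pm(u)=E_2(u)\pm 8\pi Q(u)$ are weakly lower semicontinuous on $H^{1,2}_g(D^2,\sph^2)$ (pointwise $|u\cdot(u_x\wedge u_y)|\le\tfrac12|\nabla u|^2$, and the cross terms vanish in the limit), which gives
$E_2(u_\infty)\le\liminf_j E_2(u_j)-8\pi|k-\ell|$ whenever $u_j\in\mathcal{C}_k$ converges weakly to $u_\infty\in\mathcal{C}_\ell$. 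This is inequality (15) of \cite{bc} and is precisely what Proposition \ref{p:jump} reproduces in the half-harmonic setting (with $F_\pm=E\pm 2\pi\deg$). With that substitution your dichotomy closes exactly as written: either $\ell=1$ and $u_\infty$ minimizes in $\mathcal{C}_1$, or $\ell\ne1$ and $m_1\ge m_0+8\pi$, contradicting the strict estimate. So the proof is correct once step three is rerouted through the weak lower semicontinuity of $E_2\pm8\pi Q$ rather than an energy identity.
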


The first aim of this paper is to show that a similar result holds for the fractional Dirichlet energy $E$.
More precisely, let $I=(-1,1)$ and, for a given $g\in \dot H^{\frac12,2}(\R, \sph^1)$ ,set
$$\mathcal{E}_g= \left\{u\in \dot H^{\frac12,2}(\R, \sph^1):u=g\text{ in }\R\setminus I\right\}.$$
By direct methods it is straightforward to see that there exists $u_0\in \M{E}_g$ such that
$$E(u_0)=\min_{\M{E}_g}E.$$
By a first variation argument (see e.g. \cite[Sec. 1]{milsir}), one sees that $u_0$ is a half-harmonic map, i.e. it satisfies
\begin{equation}\label{eq12harm}
(-\Delta)^\frac12 u_0 \perp T_{u_0}\sph^1\quad \text{in } I.
\end{equation}
Moreover, any solution to \eqref{eq12harm} is smooth in $I$, as shown by Da Lio and Rivi\`ere \cite{DLR} (see also \cite{MazSch0} for a more recent proof and \cite{DLS}, \cite{milpeg}, \cite{milpegsch}, \cite{milsir}, \cite{milsiryu} for the rapidly developing regularity theory of fractional harmonic maps).

We now want to show that, unless $g$ is constant in $\R\setminus I$, such minimizer is never unique (for the case $g$ constant, see Theorem \ref{thmconst} below).

\begin{thm}\label{mainthm} Let $g\in \dot H^{\frac12,2}(\R, \sph^1)$ be non-constant in $\R\setminus I$, and let $u_0\in \mathcal{E}_g$ be a minimizer of the functional $E:\mathcal{E}_g\to \R$. Then $E$ has a second critical point $u^*\ne u_0$.
\end{thm}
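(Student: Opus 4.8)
The plan is to follow the strategy of Brezis–Coron, transplanted to the fractional/half-harmonic setting via the Poisson extension and the conformal invariance of $E$. The minimizer $u_0$ lies in some homotopy class (measured by a degree) of maps agreeing with $g$ outside $I$; the goal is to produce a second critical point $u^*$ in a \emph{different} class by a mountain-pass/bubbling argument. Concretely, I would set up a minimization problem over the homotopy class obtained by "adding one bubble" to $u_0$: let $d_0 := \deg(u_0)$ (the degree of the $\sph^1$-valued map obtained from $u_0$, well-defined since $u_0\in \dot H^{1/2,2}(\R,\sph^1)$ — this is exactly where the "degree of fractional Sobolev maps" machinery promised in the abstract is needed), and consider
\[
\M{E}_g^{d_0+1} := \left\{u\in \M{E}_g : \deg(u)=d_0+1\right\}.
\]
If one can show $\inf_{\M{E}_g^{d_0+1}} E$ is attained, the minimizer $u^*$ is a half-harmonic map solving \eqref{eq12harm}, and it is distinct from $u_0$ because it lies in a different homotopy class. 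So the whole proof reduces to a compactness statement for this constrained minimization.

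The key step, and the main obstacle, is ruling out loss of compactness along a minimizing sequence $u_k\in \M{E}_g^{d_0+1}$. By concentration-compactness (applied to $|\nabla \tilde u_k|^2\,dxdy$ on $\R^2_+$, using \eqref{equiv1}), a minimizing sequence either converges — in which case we are done — or it "bubbles": part of the energy escapes into a conformal rescaling concentrating at some point of $\overline{I}$, carrying exactly one unit of degree, and the residual weak limit $v$ lies back in $\M{E}_g^{d_0}$. This is where the \emph{sharp} Brezis–Coron–type energy estimate comes in: a nontrivial bubble (a half-harmonic map $\R\to\sph^1$ of degree $\pm 1$) carries energy at least $E_{\mathrm{bubble}}$, the infimum of $E$ over degree-one half-harmonic maps from $\R$ to $\sph^1$, and this infimum must be computed exactly — presumably it equals the energy of the "Möbius" degree-one half-harmonic map, namely $2\pi$ in the normalization of \eqref{equiv1} (i.e. $[u]^2_{H^{1/2,2}} = (2\pi)^2$), by a symmetrization or Poisson-extension conformal argument. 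The decomposition then gives
\[
\inf_{\M{E}_g^{d_0+1}} E \;\geq\; E(v) + E_{\mathrm{bubble}} \;\geq\; E(u_0) + E_{\mathrm{bubble}}.
\]
To get a contradiction with bubbling (and hence force compactness), I must exhibit a \emph{competitor} in $\M{E}_g^{d_0+1}$ with energy \emph{strictly below} $E(u_0) + E_{\mathrm{bubble}}$. This is the heart of the Brezis–Coron argument: one glues a small, highly-concentrated bubble onto $u_0$ near a well-chosen point of $\de I = \{\pm1\}$ (or an interior point, whichever is optimal) at scale $\ve\to0$; the bubble contributes $E_{\mathrm{bubble}} + o(1)$, the cross/interaction terms with $u_0$ are estimated, and — crucially — because $g$ (equivalently $u_0$) is \emph{non-constant} in $\R\setminus I$, so $u_0$ is genuinely non-constant, the gluing can be arranged so that the leading-order interaction term is strictly negative, giving a net energy $< E(u_0) + E_{\mathrm{bubble}}$. (If $u_0$ were constant this gain would vanish — consistent with the hypothesis and with Theorem \ref{thmconst}.) The delicate points are: (i) the bubble-gluing must respect the boundary condition $u=g$ on $\R\setminus I$ exactly, which on the extension side means a careful cutoff in $\R^2_+$ and a projection back to $\sph^1$ — here the nonlocality of $(-\Delta)^{1/2}$, unlike the local $-\Delta$ of Brezis–Coron, makes the interaction/cross terms genuinely long-range and is the source of the main technical difficulty; (ii) verifying that the glued map indeed has degree $d_0+1$ and not $d_0$ (i.e. the bubble does not "cancel"), which follows from choosing the orientation of the bubble consistently with the sign in $\deg(u_0)+1$; and (iii) the sharp value of $E_{\mathrm{bubble}}$ must match so that equality in the concentration-compactness inequality is \emph{excluded}, not merely made implausible.

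Once compactness is established, the regularity theory of Da Lio–Rivière quoted after \eqref{eq12harm} upgrades $u^*$ to a smooth solution of \eqref{eq12harm} in $I$, and $\deg(u^*)=d_0+1\ne d_0=\deg(u_0)$ guarantees $u^*\ne u_0$, completing the proof. I expect the entire difficulty to be concentrated in the interaction-energy estimate for the glued competitor in the fractional setting — controlling the nonlocal cross terms $\int_{\R^2_+}\nabla\tilde u_0\cdot\nabla(\text{bubble extension})$ — together with the exact determination of $E_{\mathrm{bubble}}$; everything else is a fairly standard concentration-compactness package adapted from the classical two-dimensional case.
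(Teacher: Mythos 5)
Your overall architecture --- minimize $E$ in a neighboring homotopy class, quantify the energy cost $2\pi$ of a degree jump along a weakly convergent minimizing sequence, and rule the jump out by exhibiting a glued competitor with energy strictly below $\inf_{\M{E}_g}E+2\pi$ --- is exactly the paper's strategy (Proposition \ref{p:jump} combined with Lemma \ref{strictestimate}). However, there is a genuine error in your choice of the homotopy class. You fix the target class $\deg(u)=d_0+1$ and assert that the bubble's orientation can always be chosen so that the competitor lands there \emph{and} carries the strict energy gain. This is false: the strict gain in Lemma \ref{strictestimate} is obtained only when the concentrated circle is glued with orientation \emph{opposite} to $u_0'(x_0)$, so the accessible class is $k=\deg(u_0)-\mathrm{sign}\bigl(u_0(x_0)\wedge u_0'(x_0)\bigr)$, which may well be $d_0-1$ rather than $d_0+1$. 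The paper's Theorem \ref{thmex} is an explicit counterexample to your choice: for $g=g_\lambda$ with $\lambda\ge\lambda^*$ the minimizer is $u_0=g_\lambda$ with $d_0=1$, one has $\inf_{\M{E}_{g_\lambda,2}}E=4\pi=E(u_0)+2\pi$ \emph{exactly} (so no competitor of degree $2$ beats the bubbling threshold), and $E$ has no minimizer at all in $\M{E}_{g_\lambda,2}$; the second critical point lives instead in $\M{E}_{g_\lambda,0}$. Your compactness argument therefore breaks down precisely at the step you identify as the heart of the matter, and the sign issue is not a technicality one can wave away by ``choosing the orientation consistently with $d_0+1$.''

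Two further points. First, the gluing must take place at an \emph{interior} point $x_0\in I$ (to preserve $u=g$ on $\R\setminus I$), so you need $u_0'(x_0)\ne 0$ for some $x_0\in I$, not merely that $u_0$ is non-constant somewhere on $\R$; deducing this from ``$g$ non-constant in $\R\setminus I$'' requires a unique continuation statement for half-harmonic maps (the paper's Lemma \ref{l:unicont}: a half-harmonic map constant on $I$ is constant on all of $\R$), which your sketch omits. Second, a methodological comparison: where you invoke a concentration-compactness/profile decomposition to obtain $\inf\ge E(v)+2\pi$, the paper proves the cleaner fact that $F_\pm(u)=E(u)\pm 2\pi\deg(u)$ is weakly lower semicontinuous (via the Hilbert-transform representation \eqref{degformulaH} of the degree), which gives $E(u)\le\liminf_{n}E(u_n)-2\pi|k-\ell|$ directly without isolating bubbles or computing $E_{\mathrm{bubble}}$ as the energy of an extracted profile; your route could be made to work but carries extra burden (one unit of degree and at least $2\pi$ of energy per bubble) that the paper's argument avoids.
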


In order to prove Theorem \ref{mainthm} we shall use the same strategy of  Brezis-Coron \cite{bc} of minimizing in homotopy classes, although this will lead us to a geometrically different approach based on holomorphic maps. 

We start by defining the Brower degree of $u\in \dot H^{\frac12,2}(\R, \sph^1)$ as
\begin{equation}\label{deguPi}
\deg(u):=\deg(u\circ\Pi_+),
\end{equation}
where
\begin{equation}\label{defpi+}
\Pi_+:\sph^1\setminus\{i\}\to\R,\quad \Pi_+(x+iy)=\frac{x}{1-y}
\end{equation}
is the stereographic projection of $\sph^1\subset\mathbb{C}$ from the North pole $i$ and $\deg(u\circ\Pi_+)$ is the Brouwer degree of a map in $H^{\frac{1}{2},2}(\sph^1,\sph^1)$, see e.g. \cite{BN} and Section \ref{a:deg}.\footnote{Of course, if we used the stereographic projection from the South pole
$$ \Pi_-(x+iy)= \frac{x}{1+y}$$
we would obtain the opposite degree. With our choice, if $u(x)$ covers $\sph^1$ once in the counter-clockwise direction as $x$ goes from $-\infty$ to $+\infty$, then $\deg(u)=1$.}

Notice that $\Pi_+$ can be extended to a biholomorphic map $\Psi$ from $\cl{D^2}\setminus \{i\}$ onto $\bar \R^2_+$  (we identify $\R^2$ with $\mathbb{C}$), with inverse $\Phi$, see \eqref{Phi}-\eqref{Psi} below. Because of the conformal invariance of the Dirichlet integral we have
$$E(u)=\int_{\R^2_+}|\nabla \tilde u|^2 dx dy=\int_{D^2}|\nabla (\tilde u\circ \Psi) |^2 dxdy=:E(u\circ \Pi_+),$$ 
and under these trasformations we can identify $\dot H^{\frac12,2}(\R, \sph^1)$ and $H^{\frac12,2}(\sph^1, \sph^1)$. In fact, all the results of this paper can be stated equivalently for maps on $\R$ or on $\sph^1$.
With this in mind, \eqref{deguPi} is well defined for $u \in \dot H^{\frac12,2}(\R, \sph^1)$. 

Now define
$$\mathcal{E}_{g,k}=\{u\in \mathcal{E}_g: \deg(u)= k\}.$$
Then Theorem \ref{mainthm} follows immediately from the following:

\begin{thm}\label{mainthm2} Let $u_0$ minimize $E$ in $\mathcal{E}_g$ for some $g\in \dot H^{\frac12,2}(\R, \sph^1)$ non-constant in $\R\setminus I$. Then there exists $x_0\in I$ such that $u_0'(x_0)\ne 0$, and setting $k:=\deg(u_0)-\mathrm{sign}(u_0(x_0)\wedge u_0'(x_0))$,\footnote{for two vectors $v,w\in \R^2\simeq \mathbb{C}$ we define $v\wedge w:=v_1w_2-v_2w_1$.} $E$ has a minimizer $u^*$ in $\M{E}_{g,k}$. 
\end{thm}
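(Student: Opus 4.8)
The plan is to mimic the Brezis--Coron strategy: minimize $E$ in the homotopy class $\mathcal{E}_{g,k}$ for a carefully chosen $k$, and then show that the natural minimizing sequence cannot concentrate (``bubble off'') a full $\sph^2$-bubble's worth of energy, so that a minimizer exists in $\mathcal{E}_{g,k}$; the choice of $k$ guarantees $u^\ast \ne u_0$ because the two maps have different degree. The first point is to justify the existence of $x_0 \in I$ with $u_0'(x_0) \ne 0$: since $g$ is non-constant in $\R \setminus I$, the minimizer $u_0$ cannot be constant in $I$ either (its trace must match a non-constant boundary datum, and a constant extension would not minimize), and by the interior smoothness of half-harmonic maps from \cite{DLR} we get a genuine point with $u_0'(x_0) \ne 0$. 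The sign $\sigma := \mathrm{sign}(u_0(x_0) \wedge u_0'(x_0)) \in \{\pm 1\}$ records the local orientation in which $u_0$ wraps around $\sph^1$ near $x_0$.

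The key quantitative input is a sharp energy estimate \`a la Brezis--Coron, which the paper advertises in the abstract. The plan is to compare the infimum of $E$ over $\mathcal{E}_{g,k}$ with $E(u_0)$: one should prove
\begin{equation*}
\inf_{\mathcal{E}_{g,k}} E < E(u_0) + 4\pi
\end{equation*}
when $k = \deg(u_0) - \sigma$. The competitor is built by gluing onto $u_0$ a small ``half-bubble'' localized near $x_0$: using the conformal invariance of $E$ and the biholomorphic identification $\Psi$ of $\cl{D^2}\setminus\{i\}$ with $\cl{\R^2_+}$, one inserts a scaled holomorphic (or anti-holomorphic, depending on $\sigma$) disk that changes the degree by exactly $\mp 1$ while adding energy close to $4\pi$ (the area of $\sph^2$, equivalently twice the area of $D^2$ counted via the extension). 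The precise gain below $4\pi$ comes from the interaction term: because $u_0'(x_0)\ne 0$, the linear part of $u_0$ near $x_0$ cancels some of the bubble's energy, exactly as the non-constancy of $g$ is used in \cite{bc}. This is the step I expect to be the main obstacle: getting the strict inequality requires a delicate expansion of $E$ on the glued map, controlling cross terms between $u_0$ and the inserted bubble in the $\dot H^{1/2}$ (or extension) norm, and making sure the $\sph^1$-constraint is respected after gluing (one typically glues at the level of the disk-valued extensions and then projects, or works with the stereographic coordinate and a cutoff).

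With the strict estimate in hand, the concentration-compactness dichotomy finishes the argument: take a minimizing sequence $u_j$ for $E$ in $\mathcal{E}_{g,k}$; up to the conformal (M\"obius) action, either it converges strongly to a minimizer $u^\ast \in \mathcal{E}_{g,k}$, or it loses energy by bubbling, in which case at least one bubble of energy $\ge 4\pi$ splits off (a non-constant half-harmonic sphere, by the classification underlying the fractional theory, carries energy a multiple of $4\pi$), forcing $\inf_{\mathcal{E}_{g,k}} E \ge E(\text{weak limit}) + 4\pi \ge E(u_0) + 4\pi$ since the weak limit lies in $\mathcal{E}_g$ and hence has energy $\ge E(u_0)$. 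This contradicts the sharp estimate, so bubbling is excluded and $u^\ast$ exists. Finally $u^\ast$ is a critical point of $E$ (it minimizes in its homotopy class, hence solves \eqref{eq12harm}), and $\deg(u^\ast) = k \ne \deg(u_0)$ shows $u^\ast \ne u_0$, giving Theorem \ref{mainthm2} and thereby Theorem \ref{mainthm}. One should also check the degree is well-defined and continuous enough along the minimizing sequence for ``$\deg = k$'' to pass to the limit away from bubbling points, which is where the degree theory for $H^{1/2,2}(\sph^1,\sph^1)$ maps from Section \ref{a:deg} is used.
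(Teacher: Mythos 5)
Your overall architecture is the right one (it is the paper's): prove a strict energy estimate for the class $\mathcal{E}_{g,k}$ with $k=\deg(u_0)-\sigma$, and combine it with the quantization of the energy lost under a degree jump along a weakly convergent minimizing sequence. But there is a genuine quantitative gap: the energy quantum in this problem is $2\pi$, not $4\pi$. A degree-one bubble here is (the restriction of) a M\"obius/Blaschke map $\sph^1\to\sph^1$, whose energy is $\int_{D^2}|\nabla H|^2=2\int_{D^2}JH=2\pi$ (Lemma \ref{l:Blas}); correspondingly, Proposition \ref{p:jump} gives $E(u)\le\liminf E(u_n)-2\pi|k-\ell|$, and the remark following it notes that $2\pi$ is sharp. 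Your figure of $4\pi$ seems imported from the $\sph^2$-valued Brezis--Coron setting (where a bubble carries the energy $8\pi$ of a conformal sphere); for $\sph^1$-valued half-harmonic maps the classification of minimal-energy maps in a homotopy class is by Blaschke products with $E=2\pi|k|$. Consequently, proving only $\inf_{\mathcal{E}_{g,k}}E<E(u_0)+4\pi$ does \emph{not} exclude the loss of a single degree-one bubble, which costs just $2\pi$: your concentration-compactness step would not close. You must prove the sharper inequality $\inf_{\mathcal{E}_{g,k}}E<E(u_0)+2\pi$, which is exactly what the paper's Lemma \ref{strictestimate} does, with an explicit gain $(1-\ln 2)|u_0'(x_0)|^2\ve^2+o(\ve^2)$ obtained by gluing a rescaled \emph{anti}-holomorphic disk $\bar\Phi_\lambda$ (orientation opposite to $u_0'(x_0)$) near $x_0$ and optimizing the concentration parameter $\mu$; the degree change is then certified by the surgery formula of Proposition \ref{p:degjump2}, not by a soft homotopy argument.

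Two smaller points. First, your justification that $u_0$ is non-constant in $I$ (``a constant extension would not minimize'') is not an argument: the issue is whether a half-harmonic map can be constant on $I$ while $g$ is non-constant outside, and ruling this out requires unique continuation (the paper's Lemma \ref{l:unicont}, via the Hilbert/harmonic-extension reflection). Second, the paper does not need a M\"obius renormalization or a bubbling dichotomy: it suffices to observe that $F_\pm(u)=E(u)\pm 2\pi\deg(u)$ is weakly lower semicontinuous (Proposition \ref{p:jump}), take a weak limit $u^*\in\mathcal{E}_{g,\ell}$ of the minimizing sequence, and note that $\ell\ne k$ would force $E(u^*)<\inf_{\mathcal{E}_g}E$, a contradiction. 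Once the constant is corrected to $2\pi$ and the strict estimate is actually proved at that level, your scheme becomes the paper's proof.
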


The main difficulty in Theorem \ref{mainthm2}, as already shown in \cite{mirpis}, is that the weak convergence in $H^{\frac12,2}$ does not preserve the degree, so that a minimizing sequence in $\M{E}_{g,k}$ can concentrate at some points, and  weakly converge to a function in $\M{E}_{g,\ell}$ with $\ell\ne k$, see also Proposition  \ref{p:jump}.

The choice of $k$ in Theorem \ref{mainthm2} has the following geometric meaning: If $u_0$ at $x_0$ is moving along $\sph^1$ in a given direction, then the degree of $u^*$  is obtained by turning around $\sph^1$ once in the opposite direction. More precisely, in Lemma \ref{strictestimate} we shall glue near $x_0$ a concentrated circle with orientation opposite to $u'(x_0)$. This choice of orientation is necessary since, in general we cannot expect $E$ to have a minimizer in $\M{E}_{g,k}$ for every $k\in\mathbb{Z}$, as the following example shows.

\begin{thm}\label{thmex}
Consider for $\lambda>0$ the map $g_\lambda:\R\to \sph^1$ given by
\begin{equation}\label{defglambda}
g_\lambda(x)=\Pi_+^{-1}\left(\frac{x}{\lambda}\right),
\end{equation}
where $\Pi_+$ is as in \eqref{defpi+}.
Then:
\begin{enumerate}
\item[(i)] for every $\lambda>0$, $u_0=g_\lambda$ is the only minimizer of $E$ in $\M{E}_{g_\lambda,1}$;
\item[(ii)] there exists $\lambda^*>1$ such that for $\lambda\ge \lambda^*$, $u_0=g_\lambda$ is the only minimizer of $E$ in $\M{E}_{g_\lambda}$;
\item[(iii)] for every $\lambda>0$, $E$ has a minimizer $u^*$ in $\M{E}_{g_\lambda,0}$, and there exists $\lambda_0>1$ such that for $\lambda\in (0,\lambda_0)$, $u^*$ is an absolute mimimizer, i.e. it mimimizes $E$ in $\M{E}_{g_\lambda}$, while $g_\lambda$ does not minimize $E$ in $\M{E}_{g_\lambda}$;
\item[(iv)] for every $\lambda>0$, $E$ has no minimizer in $\M{E}_{g_\lambda,k}$ for $k>1$.
\end{enumerate}
\end{thm}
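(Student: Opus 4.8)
The plan is to use the conformal invariance of $E$ together with the sharp inequality $E(u)\ge 2\pi|\deg(u)|$ (whose equality case is holomorphic); all four claims then reduce to a few explicit computations.

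\medskip\noindent\emph{Set-up, and parts (i),(iv).} For $u\in\dot H^{\frac12,2}(\R,\sph^1)$ put $w=u\circ\Pi_+$ and let $\tilde w\colon D^2\to\overline{D^2}$ be its Poisson extension; then $E(u)=\int_{D^2}|\nabla\tilde w|^2=2\int_{D^2}\big(|\partial_z\tilde w|^2+|\partial_{\bar z}\tilde w|^2\big)$, while $\int_{D^2}\big(|\partial_z\tilde w|^2-|\partial_{\bar z}\tilde w|^2\big)=\int_{D^2}\det(D\tilde w)=\pi\deg(u)$ (Stokes and the definition of the degree). Hence $E(u)\ge 2\pi|\deg(u)|$, with equality, when $\deg(u)\ge 0$, exactly when $\tilde w$ is holomorphic, i.e.\ a finite Blaschke product with $\deg(u)$ factors (conjugate-holomorphic if $\deg(u)<0$). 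Since $g_\lambda\circ\Pi_+$ is the boundary trace of the degree-one Blaschke factor $z\mapsto\Phi(\Psi(z)/\lambda)$, one has $\deg(g_\lambda)=1$ and $E(g_\lambda)=2\pi$. Writing $m_k(\lambda):=\inf_{\mathcal E_{g_\lambda,k}}E$, we get $m_1(\lambda)=2\pi$ and $m_k(\lambda)\ge 2\pi|k|$. For (i): any $u\in\mathcal E_{g_\lambda,1}$ has $E(u)\ge 2\pi=E(g_\lambda)$, and if $E(u)=2\pi$ then $u\circ\Pi_+$ is a degree-one Blaschke factor agreeing with $g_\lambda\circ\Pi_+$ on the arc $\Pi_+^{-1}(\R\setminus I)$, hence everywhere, so $u=g_\lambda$. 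For (iv): gluing $k-1$ shrinking degree-one bubbles onto $g_\lambda$ inside $I$ (as in Lemma \ref{strictestimate}, here without the strict gain) gives $m_k(\lambda)\le 2\pi k$, so $m_k(\lambda)=2\pi k$ for $k\ge 2$; a minimizer $u^*$ would force $u^*\circ\Pi_+$ to be a degree-$k$ Blaschke product coinciding with the degree-one $g_\lambda\circ\Pi_+$ on an arc — impossible.

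\medskip\noindent\emph{Existence in $\mathcal E_{g_\lambda,0}$ (first half of (iii)).} Let $u_0$ minimize $E$ on $\mathcal E_{g_\lambda}$. Since $m_j(\lambda)\ge 2\pi$ for $j\ne 0$ and $m_1(\lambda)=2\pi$, $\deg(u_0)\in\{0,1\}$. If $\deg(u_0)=0$, $u_0$ is the desired minimizer. If $\deg(u_0)=1$, then $u_0=g_\lambda$ by (i), and Theorem \ref{mainthm2} yields a minimizer $u^*$ in $\mathcal E_{g_\lambda,k}$ with $k=1-\mathrm{sign}\big(g_\lambda(x_0)\wedge g_\lambda'(x_0)\big)$; since $g_\lambda$ runs counter-clockwise along $\sph^1$ we have $g_\lambda\wedge g_\lambda'>0$, so $k=0$.

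\medskip\noindent\emph{The threshold $\lambda_0>1$ (second half of (iii)), and part (ii).} First, $\lambda\mapsto m_k(\lambda)$ is non-decreasing: for $\lambda_1<\lambda_2$ one has $g_{\lambda_1}=g_{\lambda_2}\circ\sigma$ with $\sigma(x)=x\lambda_2/\lambda_1$ a conformal automorphism of $\R^2_+$, so $u\mapsto u\circ\sigma^{-1}$ is an $E$-isometry carrying $\mathcal E_{g_{\lambda_1},k}$ onto $\{w:\ w=g_{\lambda_2}\ \text{off}\ (-\lambda_2/\lambda_1,\lambda_2/\lambda_1),\ \deg w=k\}\supset\mathcal E_{g_{\lambda_2},k}$, whence $m_k(\lambda_1)\le m_k(\lambda_2)$. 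So it suffices to show $m_0(1)<2\pi$. I would take $u(x)=\Pi_+^{-1}(f(x))$ with $f(x)=x$ for $|x|\ge 1$ and $f(x)=1/x$ for $0<|x|<1$: then $u=g_1$ off $I$, and using $\Pi_-=1/\Pi_+$ and the identity $\Pi_+^{-1}\circ\Pi_-(p)=\bar p$ one gets $u\circ\Pi_+(x+iy)=x+i|y|$, the fold of $\sph^1$ onto its upper half, so $\deg(u)=0$ and $u\in\mathcal E_{g_1,0}$. Its Poisson extension has components $x$ and the harmonic extension of $|\sin\theta|$; using $|\sin\theta|=\tfrac2\pi-\tfrac4\pi\sum_{k\ge1}\tfrac{\cos 2k\theta}{4k^2-1}$, that the harmonic extension of $\cos n\theta$ has Dirichlet energy $\pi n$, and $\sum_{k\ge1}\tfrac{k}{(4k^2-1)^2}=\tfrac18$ (telescoping), one finds $E(u)=\pi+\tfrac4\pi<2\pi$. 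Hence $m_0(\lambda)\le m_0(1)<2\pi$ for $\lambda\le 1$, and (by upper semicontinuity of $m_0$) also slightly beyond; with $\lambda_0:=\sup\{\lambda>0:m_0(\lambda)<2\pi\}>1$, for $\lambda\in(0,\lambda_0)$ we have $m_0(\lambda)<2\pi\le m_j(\lambda)$ ($j\ne 0$), so the minimizer $u^*\in\mathcal E_{g_\lambda,0}$ realizes $\min_{\mathcal E_{g_\lambda}}E$ while $g_\lambda$ does not. For (ii) it remains to prove $m_0(\lambda)>2\pi$ for large $\lambda$: if $m_0(\lambda_n)\le 2\pi$ with $\lambda_n\to\infty$ and $u_n$ a minimizer, the rescalings $v_n(x)=u_n(\lambda_n x)$ satisfy $E(v_n)\le 2\pi$, $\deg(v_n)=0$, and $v_n=g_1$ off $(-1/\lambda_n,1/\lambda_n)$, so $v_n\rightharpoonup g_1$ (of degree $1$); the concentration--compactness for $H^{\frac12,2}$-maps (the degree-loss mechanism of \cite{mirpis} and Proposition \ref{p:jump}) gives $\liminf_n E(v_n)\ge E(g_1)+\sum_j E(b_j)$ with bubbles satisfying $1+\sum_j\deg(b_j)=0$, hence $\sum_j E(b_j)\ge 2\pi$ and $2\pi\ge\liminf_n E(v_n)\ge 4\pi$, a contradiction. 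Taking $\lambda^*=\lambda_0$ (monotonicity keeps $m_0>2\pi$ for $\lambda\ge\lambda^*$), for $\lambda\ge\lambda^*$ one has $\min_{\mathcal E_{g_\lambda}}E=2\pi=m_1(\lambda)$, attained only in degree one, hence only by $g_\lambda$ by (i).

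\medskip\noindent\emph{Main obstacle.} The delicate point is the second half of (iii): producing a degree-zero competitor with energy strictly below $2\pi$. The small-$\lambda$ regime is soft (energies $\to 0$), but reaching all $\lambda\le 1$ forces the explicit fold-map construction together with the exact evaluation $E=\pi+4/\pi$ — precisely where a sharp, Brezis--Coron-type estimate is unavoidable. The second substantive ingredient is the bubbling/energy-quantization analysis behind (ii), which is also what makes the degree a subtle invariant in this setting.
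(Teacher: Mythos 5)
Your argument is correct and reaches all four conclusions, but it departs from the paper's proof in several places, mostly to its advantage. Parts (i) and (iv) follow the paper's scheme (the sharp bound $E\ge 2\pi|\deg|$ with Blaschke-product rigidity, plus the fact that no Blaschke product of degree $k\ne 1$ can agree with $g_\lambda$ on $\R\setminus I$). Your treatment of (iii) replaces the paper's two separate constructions (for $\lambda=1$ the qualitative observation that the fold $V(z)=z$ resp.\ $\bar z$ is not harmonic, and for $\lambda<1$ an area argument with a modified $V_\lambda$) by the monotonicity of $\lambda\mapsto\inf_{\mathcal{E}_{g_\lambda,k}}E$ under the dilation $\sigma(x)=x\lambda_2/\lambda_1$ --- a clean observation not in the paper --- together with the exact evaluation $E=\pi+4/\pi$ of the fold map via Fourier series, which sharpens the paper's strict inequality $E<2\pi$ (your fold map is, after stereographic projection, literally the map $v$ of Proposition \ref{propex}). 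For (ii), the paper splits the double integral over $(\R\setminus I)^2$ and its complement and runs a surgery estimate; your rescaling $v_n(x)=u_n(\lambda_n x)$ followed by a single application of Proposition \ref{p:jump} (with $k=0$, $\ell=1$, giving $\liminf_n E(v_n)\ge E(g_1)+2\pi=4\pi$) is shorter and equally rigorous --- you do not even need the full bubbling decomposition you allude to.

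Three points need a word of care. First, in (iv) you cannot apply Lemma \ref{strictestimate} to $g_\lambda$ as stated: since $g_\lambda(x)\wedge g_\lambda'(x)>0$, the lemma only produces competitors of degree $\deg-1$, i.e.\ it lowers the degree. To raise it you must either first perturb $g_\lambda$ near the points $x_1,\dots,x_{k-1}$ so that the curve runs clockwise there (this is exactly the paper's step with $v_\rho=\Pi_+^{-1}\big(\Pi_+\circ g_\lambda+\sum_j\eta_\rho(\cdot-x_j)\big)$), or redo the gluing with a conformal rather than anticonformal bubble; either way the upper bound $\inf_{\mathcal{E}_{g_\lambda,k}}E\le 2\pi k$ survives. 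Second, the ``upper semicontinuity of $m_0$'' at $\lambda=1$ is not free, since the constraint set changes with $\lambda$; one must deform the $\lambda=1$ competitor so as to match $g_\lambda$ off $I$ at small energy cost (the paper is equally terse here, saying only ``by continuity''). Third, for the final uniqueness claim in (ii) you should note explicitly that $\inf_{\mathcal{E}_{g_\lambda,-1}}E=2\pi$ cannot be attained, since an anticonformal Blaschke product of degree $-1$ cannot agree with $g_\lambda$ on $\R\setminus I$; your phrase ``attained only in degree one'' needs this. None of these affects the validity of the overall scheme.
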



Theorem \ref{thmex} can be seen as a $1$-dimensional counterpart of Theorem 2 in \cite{bc}, where for the boundary datum 
\begin{equation}\label{gGH}
g_R:\partial D^2\to \sph^2,\quad g_R(x,y):=(Rx,Ry,\sqrt{1-R^2}), \quad 0<R<1,
\end{equation}
it was shown that:
\begin{enumerate}
\item the absolute minimizer of $E_2(u)=\int_{D^2}|\nabla u|^2dxdy$ in $H^{1,2}_{g_R}(D^2,\sph^2)$ is
$$\underline{u}(x,y)=\frac{2\lambda}{\lambda^2+x^2+y^2}(x,y,\lambda)+(0,0,-1),\quad \lambda:=\frac{1}{R}+\sqrt{\frac{1}{R^2}-1} ;$$
\item the function
$$\overline{u}(x,y)=\frac{2\mu}{\mu^2+x^2+y^2}(x,y,-\mu)+(0,0,1),\quad \mu:=\frac{1}{R}-\sqrt{\frac{1}{R^2}-1}$$
minimizes in the homotopy class obtained by lowering the Brouwer degree (suitably definied) by $1$, hence it is a second harmonic map in $H^{1,2}_{g_R}(D^2,\sph^2)$;
\item there is no minimizer in \emph{any} other homotopy class.
\end{enumerate}

The analogy between Theorem \ref{thmex} and Theorem 2 of \cite{bc} is very strong if one considers that the functions $\overline u$ and $\underline u$ above (and their restriction $g_R$ in \eqref{gGH}) are rescalings of the inverse stereographic projection from the South and North pole respectively. 

On the other hand, in contrast with \cite[Thm. 2]{bc}, 
 in Theorem \ref{thmex} (iii), for $\lambda$ slightly bigger than $1$ (which would be the analog of $R$ slightly smaller than $1$ in \eqref{gGH}) we do not have that $u_0=g_\lambda$ is the absolute minimizer of $E$ in $\M{E}_{g_\lambda}$, as
$$\inf_{\M{E}_{g_\lambda,0}} E < E(g_\lambda)=\min_{\M{E}_{g_\lambda,1}}E,$$
see Proposition \ref{propex}, so that in part (ii) of Theorem \ref{thmex} one cannot take $\lambda^*=1$. 
Moreover, it is open whether in part (iv) of Theorem \ref{thmex}, $E$ has minimizers in $\M{E}_{g_\lambda,k}$ for $k<0$.

The first of these facts might sound a bit surprising at first, since for $\lambda>1$, $g_\lambda|_{\R\setminus I}$ covers more than half of the circle, hence any map in $\M{E}_{g_\lambda,0}$ must cover more than half the circle at least \emph{twice} (in opposite directions), while maps in $\M{E}_{g_\lambda,1}$ can cover the whole circle only once. The non-locality of the energy, when $\lambda$ is not much bigger than $1$ makes the first option more convenient. Of course this is not the case with the local energy $\int_{\R}| u'|dx$, as in this case we have 
$$\inf_{u\in \M{E}_{g_\lambda,0}}\int_{\R}|u'|dx>2\pi =\inf_{u\in \M{E}_{g_\lambda,1}}\int_{\R}|u'|dx =\int_{\R}|g_\lambda'|dx\quad \text{for }\lambda>1.\footnote{These integrals are set to be $+\infty$ when $u'$ is not integrable.}$$

Also the second contrast between Theorem 2 of \cite{bc} and Theorem \ref{thmex} (part iv) above arises from the nonlocal nature of the fractional energy $E$. Indeed the method used by Brezis and Coron of gluing functions $u_1$ and $u_2$ via a Kelvin transformation and expressing $E_2$ of the glued functions as $E_2(u_1)+E_2(u_2)$, does not apply to the energy $E$.

\medskip

We also address the case when $g\equiv P\in \sph^1$ is constant. While it is easy to see that in this case $E$ has no minimizer in $\mathcal{E}_{g,k}$ for any $k\ne 0$ and $u\equiv P$ is the only minimizer in $\mathcal{E}_{g,0}$ (see Lemma \ref{l:Blas} below), a priori there could be other critical points. We will show that this is not the case. Because of its independent interest, we shall state the result for general target manifolds.

\begin{thm}\label{thmconst} Given a smooth closed manifold $\mathcal{N}$ isometrically embedded in $\R^N$ and $$u\in \dot H^{\frac12,2}(\R,\mathcal{N}):=\left\{u\in \dot H^{\frac12,2}(\R, \R^N): u(x)\in \mathcal{N} \text{ for a.e. }x\in\R\right\},$$
if $u(x)\equiv P$ in $\R\setminus I$ for some $P\in \mathcal{N}$ and $u$ is half-harmonic in $I$, i.e.
\begin{equation}\label{eq12harmN}
(-\Delta)^\frac{1}{2} u(x):=-\frac{\de \tilde u(x,0)}{\de y} \perp T_{u(x)} \mathcal{N} \quad \text{for }x\in I,
\end{equation}
where $\tilde u:\R^2_+\to \R^N$ is the Poisson harmonic extension, then $u\equiv P$ in $\R$.
\end{thm}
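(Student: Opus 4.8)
The plan is to pass to the Poisson harmonic extension $\tilde u=(\tilde u_1,\dots,\tilde u_N):\R^2_+\to\R^N$ of $u$ and study the associated holomorphic Hopf differential. Identifying $\R^2_+$ with the upper half-plane in $\mathbb C$, each $\tilde u_j$ is harmonic, so $\partial_z\tilde u_j$ is holomorphic in $\R^2_+$ and hence so is
$$g:=\sum_{j=1}^N(\partial_z\tilde u_j)^2=\tfrac14\big(|\partial_x\tilde u|^2-|\partial_y\tilde u|^2\big)-\tfrac i2\,\partial_x\tilde u\cdot\partial_y\tilde u .$$
Two elementary observations drive the argument. First, $|g|\le\tfrac14|\nabla\tilde u|^2$, so $g\in L^1(\R^2_+)$ with $\|g\|_{L^1}\le\tfrac14E(u)<\infty$ (the energy is finite since $u\in\dot H^{\frac12,2}$). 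Second, because $\mathcal N$ is compact, $u-P$ is bounded with support in $\bar I$, so the Poisson kernel bounds give $|\nabla\tilde u(z)|=O(|z|^{-2})$ and thus $g(z)\to 0$ as $|z|\to\infty$. I would also record that $\tilde u$ is $C^\infty$ up to $(\R\setminus\{\pm1\})\times\{0\}$: near points of $I$ this is the interior regularity of half-harmonic maps recalled above, and near points of $\R\setminus\bar I$ it follows from the Poisson formula, since there $u-P$ vanishes in a neighbourhood while $\mathrm{supp}(u-P)$ stays away.

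Next I would identify the boundary trace of $g$. On $I$ one has $\partial_x\tilde u(x,0)=u'(x)\in T_{u(x)}\mathcal N$ and, by \eqref{eq12harmN}, $\partial_y\tilde u(x,0)=-(-\Delta)^{1/2}u(x)\perp T_{u(x)}\mathcal N$, so $\partial_x\tilde u\cdot\partial_y\tilde u=0$ there; on $\R\setminus I$, $u\equiv P$ forces $\partial_x\tilde u(x,0)=0$. In either case $\operatorname{Im}g=0$ on $(\R\setminus\{\pm1\})\times\{0\}$, so by the Schwarz reflection principle $g$ extends to a function $G$ holomorphic on $\mathbb C\setminus\{\pm1\}$ with $G\in L^1(\mathbb C)$ and $G(z)\to 0$ as $|z|\to\infty$. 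A holomorphic function which is $L^1$ near an isolated singularity has at most a simple pole there, so $G$ is rational with at worst simple poles at $\pm1$ and vanishing at infinity, i.e., $G(z)=\tfrac a{z-1}+\tfrac b{z+1}$. But any such function with $(a,b)\neq(0,0)$ decays only like $|z|^{-1}$ or $|z|^{-2}$ at infinity, hence is \emph{not} in $L^1(\mathbb C)$; therefore $a=b=0$ and $g\equiv 0$ on $\R^2_+$.

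To conclude, I would take the boundary trace of $g\equiv 0$ on, say, $(1,\infty)\times\{0\}$, where $\partial_x\tilde u=0$; this gives $|\partial_y\tilde u(x,0)|^2=0$, hence $\partial_z\tilde u_j=\tfrac12(\partial_x\tilde u_j-i\partial_y\tilde u_j)=0$ on $(1,\infty)\times\{0\}$ for every $j$. Since $\partial_z\tilde u_j$ is holomorphic in $\R^2_+$, continuous up to $(1,\infty)$ and vanishes there, Schwarz reflection together with the identity theorem forces $\partial_z\tilde u_j\equiv0$ in $\R^2_+$; as $\tilde u_j$ is real-valued, $\partial_{\bar z}\tilde u_j=\overline{\partial_z\tilde u_j}=0$ as well, so $\nabla\tilde u_j\equiv0$ and $\tilde u_j$ is constant on the connected set $\R^2_+$, with value $P_j$ read off on $(1,\infty)$. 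Hence $\tilde u\equiv P$, so $u=\tilde u(\cdot,0)\equiv P$ on $\R$.

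The delicate point — and the reason the statement is not completely trivial — is the behaviour at $x=\pm1$: the half-harmonic equation holds only in the open interval $I$, so a priori $\tilde u$, and with it $g$, could be singular at $(\pm1,0)$, which is precisely why $G$ need not be entire. I would avoid asking for any regularity there and instead exploit the global integrability $g\in L^1(\R^2_+)$, coming from the finiteness of the fractional energy, to kill the potential simple poles of $G$; the remaining ingredients — interior regularity of half-harmonic maps, Poisson-kernel decay, the Schwarz reflection principle, and the classification of $L^1$ isolated singularities — are standard.
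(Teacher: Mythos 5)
Your proposal is correct and follows essentially the same route as the paper: the Hopf differential of the Poisson extension (your $g$ is the paper's $H/4$), its vanishing imaginary part on $\R\setminus\{\pm1\}$ from the half-harmonicity and the constant boundary datum, Schwarz reflection, the $L^1$ classification of the isolated singularities at $\pm1$ as at-worst-simple poles, a Liouville-type argument to kill them, and finally conformality plus the Dirichlet data forcing $\nabla\tilde u$ to vanish on $\R\setminus[-1,1]$ and hence everywhere by unique continuation. The only (immaterial) differences are that you rule out the poles via the explicit decay of $\nabla\tilde u$ at infinity and non-integrability of $a/(z-1)+b/(z+1)$, where the paper applies a Cauchy estimate to the numerator $H_1$, and that you conclude via the identity theorem for $\partial_z\tilde u_j$ rather than by extending $\tilde u\equiv P$ to the lower half-plane.
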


While \eqref{eq12harmN} should be initially read in a weak sense, by the regularity result of Da Lio-Rivi\`ere \cite{DLR2} (see also \cite{MazSch0} for a more recent proof), any solution to \eqref{eq12harmN} is smooth in $I$.

Theorem \ref{thmconst} is the fractional analog of a well-known result of Lemaire \cite{Lem}: a harmonic map from the disk $D^2\subset\R^2$ into a closed smooth manifodl $\mathcal{N}\subset\R^N$ which is constant on $\partial D^2$, must be constant everywhere. We remark that Lemaire crucially uses that $D^2$ is simply connected and gives counterexamples on the annulus. It is open whether Theorem \ref{thmconst} holds true or fails on unions of intervals.

\medskip

Finally, we mention that, similar to Theorem \ref{thmBC} and Theorems \ref{mainthm} and \ref{mainthm2}, Isobe and Marini \cite{IsoMar} proved the existence of $SU(2)$ Yang-Mills connection on $B^4\subset\R^4$ having the same boundary value, but different Chern classes. Their work is based on a strict estimate of Taubes \cite{Tau}, in the same spirit of \cite[Lemma 2]{bc} and Lemma \ref{strictestimate} below. See \cite{MarRiv} for a new approach to the strict inequality, related topics and open problems.

\medskip

\noindent\textbf{Acknowledgements} The second author first learnt about degree theory in $H^{\frac12,2}(\sph^1,\sph^1)$ (more generally in $VMO(\sph^1,\sph^1)$) and the problem of minimizing in different homotopy classes during a very inspiring talk of H. Brezis in honour of G. Stampacchia held in Pisa in November 2010. He was later made aware of the potential applications in the study of the regularity theory of $2$-dimensional half-harmonic maps by T. Rivi\`ere. This will be developed in future works. We are also grateful to A. Pisante for useful comments. We would like to warmly thank the anonymous referee for careful reading and for several useful remarks.

\section{Degree for maps in $H^{\frac{1}{2},2}(\sph^1,\sph^1)$ and $\dot H^{\frac{1}{2},2}(\R,\sph^1)$}\label{a:deg}

We first recall some facts about the degree of maps in $H^{\frac{1}{2},2}(\sph^1,\sph^1)$ and $\dot H^{\frac{1}{2},2}(\R,\sph^1)$. To our knowledge, this was first introduced by L. Boutet de Monvel and O. Gabber back in 1984, see the discussion in \cite{BGP}. For a classical introduction to the topic we refer to \cite{BN}.

Given a function $u\in C^0(\sph^1,\sph^1)$, one typically defines its Brouwer degree via a lifting, i.e. a map $\varphi\in C^0(\R,\R)$ such that
\begin{equation}\label{liftu}
u(e^{i\theta})= e^{i\varphi(\theta)}.
\end{equation}
Since $u(e^{i\theta})=u(e^{i(\theta+2\pi)})$, it follows that $\varphi(2\pi)-\varphi(0)\in 2\pi\mathbb{Z}$, and one defines
\begin{equation}\label{degC0}
\deg(u):=\frac{\varphi(2\pi)-\varphi(0)}{2\pi}\in\mathbb{Z}.
\end{equation}
On the other hand, due to the nature of our problems, it will be more natural to work with integro-differential definitions of the Brouwer degree.
We start recalling that for $u\in C^\infty(\sph^1,\sph^1)$ its Brouwer degree can be computed via the de Rahm cohomology as 
\begin{equation}\label{degformula0}
\deg(u)=\frac{1}{2\pi}\int_{\sph^1} u^*(d\theta)=\frac{1}{2\pi}\int_{\sph^1}u\wedge\dot u\,d\theta=\frac{1}{2\pi i}\int_{\sph^1}\bar u \dot u\,d\theta.\footnote{With a slight abuse of notation, when integrating we identify $\sph^1$ with $\R/\mathbb{Z}$, so that for $u:\sph^1\subset\mathbb{C}\to \mathbb{C}$ we have $\int_{\sph^1} u d\theta=\int_{\sph^1}u(e^{i\theta})d\theta:=\int_{\theta_0}^{\theta_0+2\pi} u(e^{i\theta})d\theta$ for any $\theta_0\in\R$. Moreover, $f'$ will denote derivatives on $\R$ and $\dot f$ will denote derivatives of $S^1$, i.e. $\dot f(e^{i\theta})= \frac{df(e^{i\theta})}{d\theta}$.}
\end{equation}  Here we used that for vectors $v,w\in \R^2\simeq\mathbb{C}$, $\bar v w=v\cdot w+i( v\wedge w)$, where $\bar v w=(v_1-iv_2)(w_1+iw_2)$ is the usual product in $\mathbb{C}$, while $v\wedge w:=(v_1w_2-v_2w_1)$ and $v\cdot w:=v_1w_1+v_2 w_2$, and we used that $u\cdot \dot u=0$.

This definition is equivalent to the previous one, since for $\varphi\in C^\infty(\R)$ lifting $u$ in the sense of \eqref{liftu}, we have
$$\frac{1}{i}\int_{\sph^1}\bar u \dot u\,d\theta =\int_0^{2\pi} \varphi'(x)\,dx={\varphi(2\pi)-\varphi(0)}.$$
Further, writing $u$ as Fourier series
$$u(e^{i\theta})=\frac{1}{\sqrt{2\pi}}\sum_{k\in\mathbb{Z}}a_k e^{ik\theta},\quad a_k:=\frac{1}{\sqrt{2\pi}}\int_{\sph^1}u(e^{i\theta})e^{-ik\theta}d\theta,$$
 with a simple computation we obtain (still assuming $u$ to be smooth)
\begin{equation}\label{degformula01}
\frac{1}{i}\int_{\sph^1} \bar u \dot u\, d\theta=\sum_{k\in\mathbb{Z}}k|a_k|^2.
\end{equation}
While the integral in \eqref{degformula01} is naturally defined for $u\in C^1(\sph^1,\sph^1)$, the sum on the right-hand side is defined for any $u\in H^{\frac12,2}(\sph^1,\sph^1)$,\footnote{Actually, also the integral in \eqref{degformula01} can be defined for $u\in H^{\frac{1}{2},2}(\sph^1,\sph^1)$ using the duality between $H^{-\frac{1}{2},2}$ and $H^{\frac{1}{2},2}$, as observed by Boutet de Monvel and Gabber (see \cite[p. 224]{BN} and \cite[Sec. 3]{bre}).} (see the Appendix for the definitions of fractional Sobolev spaces and their seminorms), so that we can also define
\begin{equation}\label{degformula}
\deg(u):=\frac{1}{2\pi}\sum_{k\in\mathbb{Z}}k|a_k|^2, \quad u\in H^{\frac{1}{2},2}(\sph^1,\sph^1).
\end{equation}
It follows at once that
\begin{equation}\label{degest}
|\deg(u)|\le \frac{1}{2\pi}\sum_{k\in\mathbb{Z}}|k||a_k|^2 =\frac{1}{2\pi}[u]_{H^{\frac{1}{2},2}}^2.
\end{equation}
Moreover, with the Hilbert transform
$$\mathcal{H}(u)=\sum_{k\in\mathbb{Z}\setminus\{0\}}\frac{-i k}{|k|}a_ke^{ik\theta}, \quad u\in H^{\frac{1}{2},2}(\sph^1,\sph^1),$$
we can write
\begin{equation}\label{degformulaH}
\begin{split}
\sum_{k\in\mathbb{Z}} k|a_k|^2
&=\frac{1}{2\pi} \int_{\sph^1} \sum_{k\in\mathbb{Z}\setminus\{0\}} \frac{k}{|k|}\sqrt{|k|} \bar a_ke^{-ik\theta} \sum_{\ell\in\mathbb{Z}\setminus\{0\}}\sqrt{|l|}a_\ell e^{i\ell \theta}d\theta\\
&={-i} \int_{\sph^1} \mathcal{H}[(-\Delta)^{\frac14} \bar u] (-\Delta)^{\frac14} u d\theta,
\end{split}
\end{equation}
(see \eqref{lapl14} for the definition of $(-\Delta)^{\frac14}$) hence, from \eqref{degformula}, we have the equivalent definition
\begin{equation}\label{degformulaH}
\deg (u):= \frac{1}{2\pi i}\int_{\sph^1} \mathcal{H}[(-\Delta)^{\frac14} \bar u] (-\Delta)^{\frac14} u d\theta,  \quad u\in H^{\frac{1}{2},2}(\sph^1,\sph^1).
\end{equation}
Going back to \eqref{degformula0}, for $u\in C^\infty(\sph^1,\sph^1)$, which we write as $u=u_1+iu_2$, considering a smooth extension $U=U_1+iU_2$ to the unit disk $\cl{D^2}$, and using Stokes' theorem we get
\begin{equation*}
\begin{split}
\frac{1}{i }\int_{\sph^1} \bar u \dot u d\theta&=\frac{1}{i}\int_{\sph^1}\bar u d u=\frac{1}{ i} \int_{D^2} d(\bar U dU) =\frac{1}{ i} \int_{D^2} d[(U_1-iU_2)(dU_1+idU_2)]\\
&=2\int_{D^2} dU_1\wedge dU_2=2\int_{D^2}\(\frac{\partial U_1}{\partial x} \frac{\partial U_2}{\partial y}-\frac{\partial U_1}{\partial y}\frac{\partial U_2}{\partial x}\)dxdy=2\int_{D^2} JU dxdy.
\end{split}
\end{equation*}
Considering that $|JU|\le \tfrac{1}{2}|\nabla U|^2$, the right-hand side is well defined for any $u\in H^{\frac{1}{2},2}(\sph^1,\sph^1)$ choosing as $U$ any $H^1$ extension, for instance the harmonic extension $\tilde u\in H^{1,2}(D^2,\mathbb{C})$. Hence we can also define
\begin{equation}\label{degformula2}
\deg(u):=\frac{1}{\pi}\int_D JU dxdy,\quad u\in H^{\frac{1}{2},2}(\sph^1,\sph^1), \, U\in H^{1,2}(D^2,\mathbb{C}), \, U|_{\partial D^2}=u.
\end{equation}
In a similar fashion, if $u\in C^\infty(\R,\sph^1)$ is constant outside a bounded interval, say $u\equiv a$ in  $\R\setminus [-R,R]$, we can write
\begin{align}\label{46}\deg(u)=\frac{1}{2\pi}\int_{\R}u^*(d\theta)=\frac{1}{2\pi} \int_{\R} u\wedge u' dx= \frac{1}{2\pi i}\int_\R \bar u u' dx.\end{align}
Now, writing $w=u-a$, we have that $w$ is compactly supported and $\int_\R \bar u u' dx=\int_\R \bar w w' dx$.
Then, we can write  $w'$ as Fourier integral and compute   
 \begin{align}\label{degintR} 
\deg(u)&=\frac{1}{2\pi i}\int_\R \bar w w' dx\notag=\frac{1}{2\pi i}\int_\R \bar w(x) \left(\frac{i}{\sqrt{2\pi}}\int_\R e^{i\xi x} \xi \hat w(\xi) d\xi  \right)dx \notag\\&=\frac{1}{2\pi }\int_\R \xi\hat w(\xi) \left(\frac{1}{\sqrt{2\pi}}\int_\R e^{i\xi x} \bar w(x) dx  \right)d\xi \notag =\frac{1}{2\pi }\int_\R \xi\hat w(\xi) \check{\bar w}(\xi) d\xi\notag \\ &=\frac{1}{2\pi}\int_{\R}\xi |\hat w(\xi)|^2 d\xi,
\end{align} where in the last equality we have used that $\bar {\hat w}=\check{\bar w}$. 
On the other hand
 \begin{align}  \label{degintR2}
 \int_{\R}\mathcal{H}[(-\Delta)^\frac{1}{4}\bar w]& (-\Delta)^\frac{1}{4} w dx=\int_{\R}\mathcal{F}^{-1}\left(\frac{-i \xi}{|\xi|}\mathcal F((-\Delta)^\frac{1}{4}\bar w)\right) (-\Delta)^\frac{1}{4} w dx\notag \\
 &=-i\int_{\R}\frac{\xi}{|\xi|}\mathcal F((-\Delta)^\frac{1}{4}\bar w) \mathcal{F}^{-1}\left((-\Delta)^\frac{1}{4} w\right) d\xi= -i\int_\R\frac{\xi}{|\xi|}\sqrt{|\xi|}\hat{\bar w}(\xi)\sqrt{ |\xi|}\check w(\xi) d\xi\notag\\&=-i\int_\R \xi \bar{\check w}(\xi) \check w(\xi) d\xi =-i\int_\R \xi|\check w(\xi)|^2d\xi=i\int_\R \xi|\hat w(\xi)|^2d\xi ,\end{align} 
where $(-\Delta)^\frac{1}{4}$ is defined in \eqref{fraclapl2} and $\M{H}$ denotes the Hilbert transform:  \begin{align}\label{def-Hilbert-R} \mathcal H ({v}):=\mathcal F^{-1}\left(\frac{-i \xi}{|\xi|}\hat {{v}}\right) . \end{align}
Putting \eqref{degintR} and \eqref{degintR2} together yields
\begin{equation}\label{degintR3}
\deg(u)=\frac{1}{2\pi i}\int_{\R}\mathcal{H}[(-\Delta)^\frac{1}{4}{\bar w}] (-\Delta)^\frac{1}{4} {w} dx=\frac{1}{2\pi i}\int_{\R}\mathcal{H}[(-\Delta)^\frac{1}{4}\bar u] (-\Delta)^\frac{1}{4} u dx,
\end{equation}
which, using Lemma \ref{l:approx}, can be extended by density to every $u\in \dot H^{\frac12,2}(\R,\sph^1)$.
It follows from \eqref{degintR3} and H\"older's inequality
\begin{equation}\label{stimadegR}
|\deg(u)|\le \frac{1}{2\pi}\|(-\Delta)^\frac14 u\|_{L^2}^2=\frac{1}{2\pi}E(u).
\end{equation}
Finally we remark that by composing with the conformal map $\Psi(z)=-i\frac{z+i}{z-i}$, sending $D^2$ onto $\R^2_+$ (see \eqref{Psi}), one infers from \eqref{degformula2} also
\begin{equation}\label{degformula4}
\deg(u)=\frac{1}{\pi}\int_{\R^2_+}JUdxdy,\quad U\in H^{1,2}(\R^2_+), \, U|_{\R\times\{0\}}=u.
\end{equation}

\begin{prop}\label{p:equivdeg}
Given $u\in H^{\frac12,2}(\sph^1,\sph^1)$ (resp. $u\in \dot H^{\frac12,2}(\R,\sph^1)$), the definitions of Brouwer degree given by \eqref{degformula}, \eqref{degformulaH} and \eqref{degformula2} (resp. \eqref{degintR}, \eqref{degintR3} and \eqref{degformula4}) are equivalent and take integer values. Moreover, given $u\in \dot H^{\frac12,2}(\R,\sph^1)$, $\deg(u)=\deg(u\circ\Pi_+)$, where $\Pi_+: \sph^1\setminus\{i\}\to \R$ is the stereographic projection as in \eqref{defpi+}. 
\end{prop}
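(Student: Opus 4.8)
\emph{Proof strategy.} The plan is to reduce every assertion to the case of \emph{smooth} maps, where all the identities in the statement, as well as the integrality of the degree, have already been established in the discussion above (via Stokes' theorem, the de Rham formula \eqref{degformula0} and the Fourier computations), the common value being the classical Brouwer degree. By Lemma \ref{l:approx} smooth maps are dense in $H^{\frac12,2}(\sph^1,\sph^1)$ and in $\dot H^{\frac12,2}(\R,\sph^1)$, so it suffices to show that each of the expressions \eqref{degformula}, \eqref{degformulaH}, \eqref{degformula2} (resp.\ \eqref{degintR}, \eqref{degintR3}, \eqref{degformula4}) is continuous in $u$ for the $H^{\frac12,2}$ topology. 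Granting this, integrality passes to the limit since $\mathbb{Z}$ is closed in $\R$, and any two of these expressions --- agreeing on the dense set of smooth maps, and each continuous --- agree everywhere.

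For the Fourier expressions there is little to do: the equalities \eqref{degformula}$\,=\,$\eqref{degformulaH} and \eqref{degintR}$\,=\,$\eqref{degintR3} are purely algebraic identities in frequency space, already carried out above and valid for every $u\in H^{\frac12,2}$. Continuity is immediate from Cauchy--Schwarz: recalling $\deg(u)=\frac1{2\pi}\sum_{k}k|a_k|^2$,
$$2\pi\,|\deg(u)-\deg(v)|=\Big|\sum_{k\in\mathbb{Z}}k\big(|a_k|^2-|b_k|^2\big)\Big|\le\sum_{k\in\mathbb{Z}}|k|\,|a_k-b_k|\,|a_k+b_k|\le[u-v]_{H^{\frac12,2}(\sph^1)}[u+v]_{H^{\frac12,2}(\sph^1)},$$
so \eqref{degformula} is locally Lipschitz, and the analogous bound holds for $u\mapsto\frac1{2\pi}\int_\R\xi\,|\hat w(\xi)|^2\,d\xi$ on $\dot H^{\frac12,2}(\R)$.

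For the Jacobian expressions \eqref{degformula2} and \eqref{degformula4} I would check two points. First, the Jacobian integral depends only on the boundary trace: if $U,V\in H^{1,2}$ have the same trace, then $U-V\in H^{1,2}_0$, and the null-Lagrangian identity $\int_\Omega JU\,dz=\tfrac12\int_{\partial\Omega}(U_1\,dU_2-U_2\,dU_1)$ (Stokes for smooth $U$, then approximation in $H^{1,2}$) gives $\int_\Omega JU=\int_\Omega JV$; so one may always use $U=\tilde u$. Second, $u\mapsto\int J\tilde u$ is continuous, because $u\mapsto\tilde u$ is linear and continuous into $H^{1,2}$ (resp.\ $\dot H^{1,2}$), and $\big|\int J\tilde u_n-\int J\tilde u\big|\le\|\nabla\tilde u_n-\nabla\tilde u\|_{L^2}\big(\|\nabla\tilde u_n\|_{L^2}+\|\nabla\tilde u\|_{L^2}\big)\to0$. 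Together with the smooth-case equalities $\tfrac1\pi\int_{D^2}J\tilde u=\deg u$ and $\tfrac1\pi\int_{\R^2_+}J\tilde u=\deg u$ and density, this settles \eqref{degformula2} and \eqref{degformula4}. (One can also bypass density here by the direct computation $\tfrac1\pi\int_{D^2}J\tilde u=\tfrac1{2\pi}\sum_k k|a_k|^2$, using $\tilde u(re^{i\theta})=\tfrac1{\sqrt{2\pi}}\sum_k a_k r^{|k|}e^{ik\theta}$, the pointwise identity $JU=|\partial_z U|^2-|\partial_{\bar z}U|^2$, and integration in polar coordinates.) Finally, for $\deg(u)=\deg(u\circ\Pi_+)$ I would let $\Psi:\overline{D^2}\setminus\{i\}\to\overline{\R^2_+}$ be the biholomorphic extension of $\Pi_+$ from \eqref{Phi}--\eqref{Psi} and put $U:=\tilde u\circ\Psi$: by conformal invariance of the Dirichlet integral, $U\in H^{1,2}(D^2,\mathbb{C})$ (it is bounded and $\int_{D^2}|\nabla U|^2=\int_{\R^2_+}|\nabla\tilde u|^2=E(u)<\infty$) with trace $u\circ\Pi_+$ on $\sph^1$; since $\Psi$ is holomorphic with Jacobian $|\Psi'|^2>0$, the change of variables $w=\Psi(z)$ gives $\int_{D^2}JU\,dz=\int_{\R^2_+}J\tilde u\,dw$, and applying \eqref{degformula2} to $u\circ\Pi_+$ (with the extension $U$) and \eqref{degformula4} to $u$ yields $\deg(u\circ\Pi_+)=\deg(u)$; in particular \eqref{deguPi} is consistent. (Alternatively, composition with $\Pi_+$ is a seminorm-preserving bijection between $\dot H^{\frac12,2}(\R,\sph^1)$ and $H^{\frac12,2}(\sph^1,\sph^1)$ taking smooth maps to smooth maps, so the identity follows from the smooth case --- where it is the functoriality of the Brouwer degree under the orientation-preserving diffeomorphism $\Pi_+$ --- by density.)

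The only genuinely non-algebraic ingredients are the extension-independence of the Jacobian integral (the null-Lagrangian property) and the density argument carrying the smooth-case identities and integrality over to general maps. I expect the main point requiring care to be this density step --- that is, verifying simultaneously the $H^{\frac12,2}$-continuity of all the formulas --- together with keeping track of orientations and conformal factors in the final change of variables, so that no sign is lost.
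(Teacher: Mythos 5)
Your proof is correct and follows essentially the same route as the paper: establish the identities and integrality for smooth maps via the computations preceding the proposition, then pass to general $u$ by density using the approximation results of the Appendix. The paper's own proof is a two-line sketch of exactly this argument; you have simply made explicit the continuity estimates (Cauchy--Schwarz for the Fourier/Hilbert-transform formulas, the bilinear bound for the Jacobian, and the null-Lagrangian property replacing the explicit use of Propositions \ref{l:approx2}--\ref{l:approx3} for arbitrary extensions) that the paper leaves implicit.
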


\begin{proof}
The equivalence of the above formulas for $u\in C^1(\sph^1,\sph^1)$ or for $u\in C^1(\R,\sph^1)$ constant outside an interval follows from the above computations and is an integer by classical theory. When $u\in H^{\frac{1}{2},2}(\sph^1,\sph^1)$ or $u\in \dot H^{\frac12,2}(\R,\sph^1)$, this follows by approximation using Propositions \ref{l:approx}, \ref{l:approx2} and \ref{l:approx3}.
\end{proof}

Finally, one might wonder whether for $u\in C^0\cap H^{\frac{1}{2},2}(\sph^1,\sph^1)$ the degree defined in \eqref{degC0} coincides with the one defined in  \eqref{degformula}, \eqref{degformulaH} and \eqref{degformula2}. That this is the case follows essentially from Lemma \ref{l:approx0}, since the averaging functions $u_{\av, \ve}$ converge uniformly to $u$ as $\ve\to 0$ for $u\in C^0(\sph^1,\sph^1)$, hence $\deg(u_{\av, \ve})=\deg(u)$ for $\ve$ small.

\section{Degree jumps and surgeries}

\subsection{Degree jumps under weak convergence}

The first type of degree jump that we will consider is due to the fact that the degree is not continuous with respect to the weak convergence in $H^{\frac12,2}(\sph^1,\sph^1)$, or in $\dot H^{\frac12,2}(\R,\sph^1)$.

We shall say that a sequence $(u_n)\subset \dot  H^{\frac{1}{2},2}(\R,\sph^1)$ 
 weakly converges to $u$ in $\dot H^{\frac{1}{2},2}(\R,\sph^1)$ if
$$\lim_{n\to \infty}\int_{\R}(-\Delta)^\frac14 u_n  (-\Delta)^\frac14 v dx=\int_{\R}(-\Delta)^\frac14 u  (-\Delta)^\frac14 v dx,\quad \text{for } v\in \dot H^{\frac{1}{2},2}(\R,\mathbb{C}).$$
As already mentioned, weakly converging sequences might jump from a homotopy class to another. The next proposition, which is the analog \cite[Lemma 3]{kuw},  and (15) in \cite{bc} (see also \cite{jost} and \cite[Lemma 1]{bc2}), quantifies the minimal energy cost of such jumps.

\begin{prop}\label{p:jump} Let $(u_n)\subset \mathcal{E}_k=\mathcal{E}_k(\R,\sph^1):=\{v\in \dot H^{\frac12,2}(\R,\sph^1):\deg(v)=k\}$, converge weakly in $\dot H^{\frac{1}{2},2}(\R,\sph^1)$ to $u\in \mathcal{E}_{\ell}$. Then $$E(u)\le \liminf_{n\to\infty} E(u_n)-2\pi |k-\ell|.$$
Equivalently, if $(u_n)\subset \mathcal{E}_k=\mathcal{E}_k(\sph^1,\sph^1):=\{v\in  H^{\frac12,2}(\sph^1,\sph^1):\deg(v)=k\}$ converge weakly in $\dot H^{\frac{1}{2},2}(\sph^1,\sph^1)$ to $u\in \mathcal{E}_{\ell}$, then $$E(u)\le \liminf_{n\to\infty} E(u_n)-2\pi |k-\ell|.$$ 
\end{prop}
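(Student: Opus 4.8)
The plan is to work with the harmonic extensions on the half-space (equivalently on the disk), exploiting the formula $E(u) = \int_{\R^2_+}|\nabla\tilde u|^2$ and the Jacobian formula \eqref{degformula4} for the degree. First I would normalize: passing to a subsequence we may assume $E(u_n)\to L:=\liminf_n E(u_n)$, and by weak lower semicontinuity of the Dirichlet energy $E(u)\le L$. The point is to improve this by the quantity $2\pi|k-\ell|$, which measures the degree that is "lost to concentration." To capture this, I would consider the harmonic extensions $\tilde u_n$ on $\R^2_+$; since $(\tilde u_n)$ is bounded in $H^{1,2}_{\loc}$ it converges weakly (up to a subsequence) to $\tilde u$, the harmonic extension of $u$. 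Write $v_n := \tilde u_n - \tilde u$, so $v_n \weak 0$ weakly in $H^1$ and $v_n\to 0$ strongly in $L^2_{\loc}$, and
$$
E(u_n) = \int_{\R^2_+}|\nabla\tilde u_n|^2 = \int_{\R^2_+}|\nabla\tilde u|^2 + \int_{\R^2_+}|\nabla v_n|^2 + 2\int_{\R^2_+}\nabla\tilde u\cdot\nabla v_n,
$$
and the cross term tends to $0$ by weak convergence. Hence $L = E(u) + \lim_n\int_{\R^2_+}|\nabla v_n|^2$ (along a further subsequence), so it suffices to show
$$
\liminf_{n\to\infty}\int_{\R^2_+}|\nabla v_n|^2 \ \ge\ 2\pi|k-\ell|.
$$

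The key step is then to relate $\int|\nabla v_n|^2$ to a degree. Using \eqref{degformula4} with the extension $\tilde u_n$, we have $k = \deg(u_n) = \frac1\pi\int_{\R^2_+}J\tilde u_n$ and $\ell = \deg(u) = \frac1\pi\int_{\R^2_+}J\tilde u$. The Jacobian is quadratic, $J(a+b) = Ja + Jb + (\text{bilinear cross terms})$, and I would show the cross terms integrate to something converging to $0$: these have the form $\partial_x\tilde u^i\,\partial_y v_n^j$ etc., which vanish in the limit since $\nabla v_n \weak 0$ weakly in $L^2$ against the fixed $L^2$ function $\nabla\tilde u$. Therefore
$$
\pi(k-\ell) = \int_{\R^2_+}J\tilde u_n - \int_{\R^2_+}J\tilde u = \int_{\R^2_+}Jv_n + o(1).
$$
Now I invoke the pointwise bound $|Jv_n|\le\frac12|\nabla v_n|^2$, giving $\pi|k-\ell| \le \frac12\int_{\R^2_+}|\nabla v_n|^2 + o(1)$. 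This only yields the constant $\pi|k-\ell|$ rather than the sharp $2\pi|k-\ell|$; the improvement by a factor of $2$ is exactly the à la Brezis–Coron sharpening.

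The main obstacle — and the heart of the argument — is recovering the sharp factor $2$. The bound $|Jv_n|\le\frac12|\nabla v_n|^2$ is an equality only when $v_n$ is (anti)holomorphic, and the whole of $v_n$ cannot be holomorphic since it has the wrong structure. The standard remedy (as in \cite[Lemma 3]{kuw} and \eqref{gGH}-style arguments of \cite{bc}): the degree defect is concentrated at finitely many points, and near each concentration point $v_n$, after rescaling, converges to a nonconstant harmonic map $\R^2\to\R^2$ with values in $\cl{D^2}$ and a definite degree; such a bubble, being a rescaled harmonic extension of an $\sph^1$-valued map covering the circle at least once, has Dirichlet energy at least $2\pi$ per unit of degree (this is the sharp constant, attained by $\pm$ the inverse stereographic projection / the bubble $\Phi$, and is where conformal invariance enters). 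Concretely I would: (i) extract concentration points via a Lions-type concentration–compactness / bubbling analysis on $\{\tilde u_n\}$, (ii) at each bubble point blow up to obtain a limiting harmonic map $\omega_j:\R^2\to\cl{D^2}$ with $\sum_j\deg(\omega_j) = k-\ell$ (with signs, but one then uses the triangle inequality), (iii) apply the sharp energy–degree inequality $\int_{\R^2}|\nabla\omega_j|^2\ge 2\pi|\deg(\omega_j)|$ for such maps — which itself follows from the pointwise Jacobian bound together with the fact that a harmonic $\cl{D^2}$-valued map realizing its energy bound must be holomorphic and hence, being bounded on $\R^2$, a finite Blaschke-type product — and (iv) sum the bubble energies, using that they are asymptotically disjoint from each other and from $\tilde u$, to conclude $\liminf_n\int_{\R^2_+}|\nabla v_n|^2 \ge \sum_j 2\pi|\deg(\omega_j)| \ge 2\pi|k-\ell|$. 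The equivalence of the $\R$ and $\sph^1$ statements is then immediate from the conformal identification $\Psi$ and Proposition \ref{p:equivdeg}.
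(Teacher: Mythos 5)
Your direct argument via the harmonic extension is essentially correct, and --- contrary to what you claim --- it already yields the sharp constant: the degree formula \eqref{degformula4} carries a factor $\tfrac1\pi$ (not $\tfrac1{2\pi}$), so from $\pi(k-\ell)=\int_{\R^2_+}Jv_n\,dxdy+o(1)$ and the pointwise bound $|Jv_n|\le\tfrac12|\nabla v_n|^2$ you get
\[
\pi|k-\ell|\le\frac12\int_{\R^2_+}|\nabla v_n|^2\,dxdy+o(1),
\qquad\text{i.e.}\qquad
\int_{\R^2_+}|\nabla v_n|^2\,dxdy\ge 2\pi|k-\ell|+o(1),
\]
which, combined with your energy splitting $\liminf_nE(u_n)=E(u)+\lim_n\int_{\R^2_+}|\nabla v_n|^2\,dxdy$, is exactly the assertion. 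The ``main obstacle'' you describe is therefore spurious, and the entire concentration--compactness/bubbling discussion in the second half of your proposal is unnecessary; this is fortunate, because as written that part is only a sketch and would require substantial work (boundary bubbles, no-neck energy identities) to make rigorous. The paper proves the proposition by a parallel but boundary-based route: it shows that the functionals $F_\pm(u)=E(u)\pm2\pi\deg(u)$ are weakly lower semicontinuous, using the Hilbert-transform representation \eqref{degformulaH} of the degree, Lemma \ref{Lem-Hilbert} to dispose of the cross terms for $w_n=u_n-u\weak0$, and the bound $\bigl|\int_{\sph^1}\mathcal{H}[(-\Delta)^{\frac14}\bar w_n](-\Delta)^{\frac14}w_n\,d\theta\bigr|\le E(w_n)$, which plays exactly the role your Jacobian inequality plays in the interior; the paper's second Remark after the proposition explicitly records your interior version as an alternative proof. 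To make your argument complete you should still justify that one may assume $\liminf_nE(u_n)<\infty$ and that $\nabla\tilde u_n\weak\nabla\tilde u$ in $L^2(\R^2_+)$ (which follows from the paper's definition of weak convergence via the polarization of \eqref{FL2}, boundedness of $\|\nabla\tilde u_n\|_{L^2}$, and harmonicity to identify the unique weak limit point).
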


In the proof of Proposition \ref{p:jump} we will use the following:

\begin{lem}\label{Lem-Hilbert} For $v,w\in H^{\frac12,2}(\sph^1,\mathbb{C})$, we have $\mathcal H((-\D)^\frac14 w) =(-\D)^\frac14(\mathcal H (w) )$ and
\begin{equation}\label{eqlemH}
\int_{\sph^1} \mathcal H((-\D)^\frac14 \bar v) (-\D)^\frac14w d\theta=-\overline{\int_{\sph^1}\mathcal (-\D)^\frac14  v (-\D)^\frac14 (\mathcal H (\bar w))d\theta}.
\end{equation}
\end{lem}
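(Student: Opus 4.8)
The plan is to work entirely on the Fourier side, since both the Hilbert transform $\mathcal H$ and the operator $(-\Delta)^{\frac14}$ are Fourier multipliers on $\sph^1$. Writing $w(e^{i\theta})=\frac{1}{\sqrt{2\pi}}\sum_{k}b_k e^{ik\theta}$, one has $(-\Delta)^{\frac14}w=\frac{1}{\sqrt{2\pi}}\sum_{k}\sqrt{|k|}\,b_k e^{ik\theta}$ and $\mathcal H(w)=\frac{1}{\sqrt{2\pi}}\sum_{k\ne 0}\frac{-ik}{|k|}b_k e^{ik\theta}$. The first assertion $\mathcal H((-\Delta)^{\frac14}w)=(-\Delta)^{\frac14}(\mathcal H(w))$ is then immediate: the $k$-th Fourier coefficient of both sides is $\frac{-ik}{|k|}\sqrt{|k|}\,b_k$ (and $0$ for $k=0$), because the two multipliers $\sqrt{|k|}$ and $\frac{-ik}{|k|}$ commute as scalars. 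One should note for later use that since $v\in H^{\frac12,2}$, the coefficients $\sqrt{|k|}\,a_k$ and $\sqrt{|k|}\,b_k$ are in $\ell^2$, so every series manipulation below (interchanging sums, applying Parseval) is justified by absolute convergence.

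For the identity \eqref{eqlemH}, I would expand both sides in Fourier coefficients. Let $v=\frac{1}{\sqrt{2\pi}}\sum_k a_k e^{ik\theta}$. The complex conjugate $\bar v$ has $k$-th coefficient $\overline{a_{-k}}$, so $(-\Delta)^{\frac14}\bar v$ has $k$-th coefficient $\sqrt{|k|}\,\overline{a_{-k}}$ and $\mathcal H((-\Delta)^{\frac14}\bar v)$ has $k$-th coefficient $\frac{-ik}{|k|}\sqrt{|k|}\,\overline{a_{-k}}$ for $k\ne 0$. Using Parseval on $\sph^1$ (with the convention $\int_{\sph^1}fg\,d\theta$ pairing the $k$-th coefficient of $f$ with the $(-k)$-th coefficient of $g$, since there is no conjugation in the integral), the left-hand side becomes
\begin{equation*}
\int_{\sph^1}\mathcal H((-\Delta)^{\frac14}\bar v)(-\Delta)^{\frac14}w\,d\theta=\sum_{k\ne 0}\frac{-ik}{|k|}\sqrt{|k|}\,\overline{a_{-k}}\cdot\sqrt{|{-k}|}\,b_{-k}=\sum_{k\ne 0}\frac{-ik}{|k|}|k|\,\overline{a_{-k}}\,b_{-k}.
\end{equation*}
Re-indexing $k\mapsto -k$ turns this into $\sum_{k\ne 0}\frac{ik}{|k|}|k|\,\overline{a_k}\,b_k$. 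For the right-hand side I compute $\int_{\sph^1}(-\Delta)^{\frac14}v\,(-\Delta)^{\frac14}(\mathcal H(\bar w))\,d\theta$ the same way: $(-\Delta)^{\frac14}v$ has $k$-th coefficient $\sqrt{|k|}\,a_k$, while $\mathcal H(\bar w)$ has $k$-th coefficient $\frac{-ik}{|k|}\overline{b_{-k}}$ and hence $(-\Delta)^{\frac14}(\mathcal H(\bar w))$ has $k$-th coefficient $\frac{-ik}{|k|}\sqrt{|k|}\,\overline{b_{-k}}$; pairing $k$ with $-k$ and re-indexing gives $\sum_{k\ne 0}\frac{ik}{|k|}|k|\,a_k\,\overline{b_k}$. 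Taking the complex conjugate of this last expression and negating reproduces exactly $\sum_{k\ne 0}\frac{ik}{|k|}|k|\,\overline{a_k}\,b_k$ (the conjugate of $\frac{ik}{|k|}$ is $\frac{-ik}{|k|}$, and the extra minus sign flips it back), which matches the left-hand side. This establishes \eqref{eqlemH}.

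The only genuinely delicate point — and the one I would be most careful about — is bookkeeping of conjugations and index reflections in the Parseval pairing, since the integral $\int_{\sph^1}fg\,d\theta$ carries no conjugate (unlike the $H^{\frac12,2}$ inner product), so it pairs the $k$-th mode of $f$ with the $(-k)$-th mode of $g$; a sign or conjugate error here is the obvious pitfall. A cleaner, essentially calculation-free alternative would be to observe that $\mathcal H$ is skew-adjoint for the bilinear pairing $\langle f,g\rangle:=\int_{\sph^1}fg\,d\theta$ (its multiplier $\frac{-ik}{|k|}$ satisfies $m(-k)=-m(k)$), that $(-\Delta)^{\frac14}$ is self-adjoint for the same pairing, and that $\overline{\langle f,g\rangle}=\langle \bar f,\bar g\rangle$; combining $\langle \mathcal H((-\Delta)^{\frac14}\bar v),(-\Delta)^{\frac14}w\rangle=-\langle(-\Delta)^{\frac14}\bar v,\mathcal H((-\Delta)^{\frac14}w)\rangle=-\langle(-\Delta)^{\frac14}\bar v,(-\Delta)^{\frac14}\mathcal H(w)\rangle$ with $\overline{\langle(-\Delta)^{\frac14}\bar v,(-\Delta)^{\frac14}\mathcal H(w)\rangle}=\langle(-\Delta)^{\frac14}v,(-\Delta)^{\frac14}\mathcal H(\bar w)\rangle$ yields \eqref{eqlemH} directly, using the first part of the lemma to move $(-\Delta)^{\frac14}$ past $\mathcal H$.
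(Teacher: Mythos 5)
Your proof is correct and follows essentially the same route as the paper: the paper also works with the Fourier-multiplier definitions, proving the commutation identity directly and then deducing \eqref{eqlemH} from $\overline{\mathcal H(f)}=\mathcal H(\bar f)$ together with the skew-adjointness $\int_{\sph^1}\mathcal H(\bar f)g\,d\theta=-\int_{\sph^1}\bar f\,\mathcal H(g)\,d\theta$ — precisely the ``calculation-free alternative'' you sketch at the end, of which your explicit coefficient computation is just the unwound version. Both the bookkeeping and the convergence justification via $\sqrt{|k|}a_k,\sqrt{|k|}b_k\in\ell^2$ are correct.
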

\begin{proof} The first identity follows easily from the definition of the Hilbert transform and $(-\Delta)^\frac14$ on $H^{\frac12,2}(\sph^1,\mathbb C)$. From this, and from the identities
$$\overline{\mathcal{H}(f)}=\mathcal{H}(\bar f),\quad \int_{\sph^1}\mathcal{H}(\bar f) gd\theta = -\int_{\sph^1}\bar f\mathcal{H}(g) d\theta,\quad f,g\in L^2(\sph^1),$$
we immediately get \eqref{eqlemH}. 
\end{proof}

\noindent\emph{Proof of Proposition \ref{p:jump}}
We prove the proposition for $(u_n)\subset \mathcal E_k(\sph^1,\sph^1)$. We define
$$F_\pm(u)=E(u)\pm 2\pi \deg (u),$$
and prove that $F_\pm$ is weakly lower semicontinuous in $ H^{\frac{1}{2},2}(\sph^1,\sph^1)$.
It will be convenient to use the degree formula \eqref{degformulaH}
Set $w_n:= u_n-u\rightharpoonup 0$. Using \eqref{degformulaH}, and Lemma \ref{Lem-Hilbert} we get
\begin{align*}
F_\pm (u_n)&=E(u)+E(w_n)+o(1) \pm \frac{1}{i}\int_{\sph^1}\mathcal{H}[(-\Delta)^\frac14(\bar u+\bar w_n)](-\Delta)^\frac14(u+w_n)d\theta\\
&=F_\pm (u)+ E(w_n)  \pm \frac{1}{i} \int_{\sph^1}\mathcal{H}[(-\Delta)^\frac14\bar w_n](-\Delta)^\frac14 w_n d\theta+o(1),
\end{align*}   provided \begin{align}\label{zero}\int_{\sph^1}\mathcal{H}[(-\Delta)^\frac14\bar w_n](-\Delta)^\frac14 u d\theta\to0\quad\text{and }\int_{\sph^1}\mathcal{H}[(-\Delta)^\frac14\bar u ](-\Delta)^\frac14 w_n d\theta\to0.\end{align}
Then, using that
$$\left|\int_{\sph^1}\mathcal{H}[(-\Delta)^\frac14\bar w_n](-\Delta)^\frac14 w_n d\theta\right|\le  \|(-\Delta)^\frac14\bar w_n\|_{L^2}^2=E(w_n)$$
we obtain $F_\pm(u)\le F_\pm(u_n)+o(1)$ as $n\to \infty$, and we conclude that $F_\pm$ is weakly lower-semicontinuous. Then the lemma follows from
$$E(u)=F_{\pm}(u)\mp 2\pi \ell \le \liminf_{n\to\infty}F_{\pm}(u_n)\mp2\pi \ell=\liminf_{n\to\infty} E(u_n) \pm 2\pi(k-\ell).$$ 
It follows from the definition of weak convergence and Lemma \ref{Lem-Hilbert} that \eqref{zero} hold true, and we conclude the proof of the proposition. 
 \hfill $\square$

\begin{rmk}
The constant $2\pi$ is sharp, as can be proven by considering a concentrating sequence $(u_n)$ of  (restrictions of) M\"obius functions with $\deg (u_n)=1$, $E(u_n)=2\pi$, $u_n\rightharpoonup u\equiv C$ as $n\to\infty$. More generally, using (restriction of) Blaschke products as in \eqref{Blas}, one can construct a sequence $(u_n)$ with $\deg(u_n)=k>0$, $E(u_n)=2\pi k$ (by Lemma \ref{l:Blas}), $u_n\rightharpoonup u$, $\deg(u)=\ell\in [0,k)$, $E(u)=2\pi \ell$.
\end{rmk}

\begin{rmk}
One can give an analogous proof of Proposition \ref{p:jump} by working with $\tilde u_k\rightharpoonup \tilde u$ in $H^{1,2}(\R^2_+)$ and using \eqref{degformula4}, together with $|J\tilde u|\le \frac{1}{2}|\nabla \tilde u|^2$.
\end{rmk}

\begin{rmk}
In attempting to formulate a minimization problem in $\dot W^{s,\frac{1}{s}}(\R,\sph^1)$ or in $W^{s,\frac{1}{s}}(\sph^1,\sph^1)$ with $s\ne \frac12$, in the spirit of \cite{MazSch, MazSch2}, one of the main obstacles would be finding the analog of the sharp constant $2\pi$.
\end{rmk}

\subsection{Surgeries}

Next we will consider a degree change due to surgery, namely, given a closed arc $A\subset \sph^1$ we will  replace a function $u\in H^{\frac{1}{2},2}(\sph^1,\sph^1)$ in $A$ with another map $v:A\to \sph^1$ such that the resulting map is still in $H^{\frac12,2}(\sph^1,\sph^1)$. Up to a M\"obius transformation, we can assume $A=\sph^1_+:=\sph^1\cap \{(x,y)\in \R^2:y\ge 0\}$.

 Set the gluing of $v$ to $u$ as
\begin{equation}\label{defw}
v\& u(e^{i\theta}):=
\begin{cases}
v(e^{i\theta}) &\text{for }0\le \theta\le \pi\\
u(e^{i\theta}) &\text{for } \pi <\theta< 2\pi,
\end{cases}
\end{equation}
and the gluing of $v$ to the reflection of $u$ as
\begin{equation}\label{defuv}
v\# u(e^{i\theta}):=
\begin{cases}
v(e^{i\theta}) &\text{for }0\le \theta\le \pi\\
u(e^{-i\theta}) &\text{for } \pi <\theta< 2\pi.
\end{cases}
\end{equation}
If $u\in C^0(\sph^1,\sph^1)$, $v\in C^0(\sph^1_+,\sph^1)$ and $u(\pm1)=v(\pm1)$, so that $v\&u, v\# u\in C^0(\sph^1,\sph^1)$, it is fairly easy to see that 
$$\deg(v\& u)= \deg(u)+\deg (v\#u).$$
In particular the degree change only depends on the values of $u$ and its replacement in $\sph^1_+$, as it is well known.

We want to prove the same formula for $u,v\in H^{\frac12,2}(\sph^1,\sph^1)$. Two main difficulties arise. First of all we do not have an analog of $u(\pm1)=v(\pm1)$ in terms of traces of functions in $H^{\frac12,2}$. As Proposition \ref{example2} shows, and contrary to what happens for maps in $H^{1,2}(\Omega,\sph^2)$ with $\Omega\Subset \R^2$, a map $u\in H^{\frac12,2}(\sph^1,\sph^1)$ might not have a trace on the boundary of an arc $A$ of $\sph^1$, and in fact, for some point $e^{i\theta_0}\in \sph^1$, $u(e^{i\theta})$  can wind infinitely many times around $\sph^1$ as $\theta\to \theta_0^+$ and infinitely many times, but with opposite orientation as $\theta\to \theta_0^-$. The heavy cancelation due to the opposite orientation of the windings allow the degree of $u$ to still be finite, but this suggests extra care in cutting, gluing and counting the degree of functions in $H^{\frac12,2}(\sph^1,\sph^1)$.

This might appear to be only a problem of trace (in particular of the unboundedness of functions in $H^{\frac12,2}(\sph^1,\sph^1)$), but problems arise even if one considers functions in $H^{\frac12,2}\cap C^0(\sph^1,\sph^1)$, as Proposition \ref{example1} shows. In fact, one can take a map $H^{\frac12,2}\cap C^0(\sph^1,\sph^1)$ and replace it locally by a constant map obtaining a new map that, in spite of being continuous, does not belong to $H^{\frac12,2}(\sph^1,\sph^1)$.  Both phenomena are essentially due to the nonlocal nature of the $H^{\frac12,2}$-seminorm, and in particular to cancelations in the double integral defining it. Because of them, it will be necessary to assume that $v\&u, v\#u\in H^{\frac12,2}(\sph^1,\sph^1)$:

\begin{prop}\label{p:degjump} Consider $u \in H^{\frac{1}{2},2}(\sph^1,\sph^1)$ and $v:\sph^1_+\to \sph^1$. Set $v\&u$ and $v\#u$ as in \eqref{defw} and \eqref{defuv} and assume that $v\&u, v\#u\in H^{\frac{1}{2},2}(\sph^1,\sph^1)$. Then
\begin{equation}\label{eq:jump}
\deg(v\&u)= \deg(u)+\deg (v\#u).
\end{equation}
\end{prop}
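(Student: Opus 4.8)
The plan is to reduce the general $H^{\frac12,2}$ statement to the classical continuous case by a careful approximation, exploiting the degree formula \eqref{degformula2} in terms of the Jacobian of a harmonic extension, which is robust under $H^{1,2}$-convergence of extensions. First I would fix $u$, $v$ and set $w:=v\&u$, $w':=v\#u$, all assumed to be in $H^{\frac12,2}(\sph^1,\sph^1)$, and note that the reflection map $\sigma(e^{i\theta})=e^{-i\theta}$ is an (orientation-reversing) isometry of $\sph^1$ fixing $\pm1$, so that $u\circ\sigma$ agrees with $w'$ on the lower arc and with $u$ on... more precisely, $w$ and $w'\circ\sigma$ agree on the lower semicircle $\{\pi\le\theta\le 2\pi\}$ (both equal $u$ there after the appropriate relabeling), while on the upper semicircle $w=v=w'$. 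The point of introducing $w'\circ\sigma$ is that it has the \emph{same} value as $v$ on $\sph^1_+$ and the same values as $u$ on $\sph^1_-$, just as $w$ does, but read through the reflection; in the smooth case one has $\deg(w')=\deg(v\#u)=-\deg(w'\circ\sigma)$ plus boundary bookkeeping, which is exactly the combinatorial identity $\deg(v\&u)=\deg(u)+\deg(v\#u)$.

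The technical core is the approximation. Since $w,w'\in H^{\frac12,2}(\sph^1,\sph^1)$, by Proposition \ref{l:approx0} (the averaging construction $u_{\av,\ve}$, convergence in $H^{\frac12,2}$ and, on continuity points, uniformly) applied on each closed arc separately, I would produce sequences $v_n\in C^1(\cl{\sph^1_+},\sph^1)$, $u_n\in C^1(\cl{\sph^1_-},\sph^1)$ with $v_n\to v$, $u_n\to u$ in the respective $H^{\frac12,2}$-seminorms on arcs, taking care that the traces match at $\pm1$ so that $v_n\&u_n, v_n\#u_n\in C^1(\sph^1,\sph^1)$ — this matching is the delicate step, since as emphasized in the text a generic $H^{\frac12,2}$ map need not have a trace at a point. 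Here one uses crucially the \emph{hypothesis} that $w=v\&u$ and $w'=v\#u$ already lie in $H^{\frac12,2}(\sph^1,\sph^1)$: this forces the cross double-integral of $v$ and $u$ over the two arcs to be finite, which is precisely the quantitative obstruction to the trace mismatch, and it lets one mollify $v$ and $u$ near $\pm1$ consistently (e.g. by first extending via Poisson to $D^2$, composing with a slightly shrunk disk to get $C^1$ maps into $\cl{D^2}$, projecting onto $\sph^1$, and using that $\|\nabla\tilde w\|_{L^2(D^2)}^2=\frac1{2\pi}[w]^2<\infty$ controls everything). For the approximants the classical identity $\deg(v_n\&u_n)=\deg(u_n)+\deg(v_n\#u_n)$ holds, and each term converges to the corresponding degree of $w$, $u$, $w'$ by \eqref{degformula} (or equivalently by $H^1$-stability of $\int_{D^2}J\tilde u$, using $|J U|\le\frac12|\nabla U|^2$ and that harmonic extensions converge in $H^{1,2}(D^2)$ when the boundary data converge in $H^{\frac12,2}$). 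Passing to the limit yields \eqref{eq:jump}.

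An alternative route I would keep in reserve, avoiding delicate trace matching of the approximants, is to work directly on the disk: harmonically extend $w$ to $\tilde w\in H^{1,2}(D^2,\mathbb C)$, split $D^2=D^2_+\cup D^2_-$ along the diameter $[-1,1]$, and observe that $\deg(w)=\frac1\pi\int_{D^2_+}J\tilde w+\frac1\pi\int_{D^2_-}J\tilde w$; the contribution of each half-disk depends, via Stokes, only on the boundary integral $\frac1{2\pi i}\int\bar W\,dW$ over the boundary arc plus the diameter, and one then compares the diameter contributions of $w$, $u$, and the reflected $w'$ — they cancel in the combination $\deg(w)-\deg(u)-\deg(w')$. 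The main obstacle in either approach is the same and is exactly the phenomenon highlighted in Propositions \ref{example1} and \ref{example2}: the nonlocal seminorm does not split over arcs, so neither the degree nor the $H^{\frac12,2}$-membership is ``local'', and one must genuinely use the standing hypothesis $v\&u,v\#u\in H^{\frac12,2}$ to tame the cross terms near the gluing points $\pm1$. I expect that controlling those cross double-integrals — equivalently, showing the mollified data can be made to agree at $\pm1$ with vanishing error in the seminorm — is where the real work lies; the combinatorial degree identity itself is elementary once everything is continuous.
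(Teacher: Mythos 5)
Your combinatorial picture is right and you correctly locate where the hypothesis $v\&u,v\#u\in H^{\frac12,2}(\sph^1,\sph^1)$ must enter, but neither of your two routes is carried out, and the one you describe most concretely (the ``alternative route'' on the disk) contains a step that fails. The degree formula \eqref{degformula2} is extension-independent, but once you commit to the \emph{harmonic} extensions $\tilde w,\tilde u,\tilde w'$ of $w=v\&u$, $u$, $w'=v\#u$, their traces on the diameter $[-1,1]\times\{0\}$ are three \emph{different} functions, each determined nonlocally by its full boundary datum; hence in the Stokes decomposition of $\int_{D^2_\pm}J\tilde w$ into an arc contribution plus a diameter contribution, the diameter terms in the combination $\deg(w)-\deg(u)-\deg(w')$ do not cancel. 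The paper's key device is precisely to avoid harmonic extensions: it prescribes a \emph{common} interface trace $u^*$ on the diameter, obtained by transplanting $u|_{\sph^1_-}$ via the parametrization $\gamma(x,0)=e^{i\frac{\pi}{2}(x-1)}$; it then proves, via the elementary distance comparisons \eqref{claimgamma0}--\eqref{claimgamma}, that the resulting boundary data $u^*_\pm$ on $\partial D^2_\pm$ and $v^*$ on $\partial D^2_+$ lie in $H^{\frac12,2}$ of those Lipschitz curves --- this is exactly where the finiteness of $[v\&u]_{H^{\frac12,2}(\sph^1)}$ is used, to control the cross integral of $v$ against $u$ over $\sph^1_+\times\sph^1_-$ --- and finally takes arbitrary $H^{1,2}$ extensions with this shared diameter trace (Proposition \ref{trace}). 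With a common trace the patched maps are genuine $H^{1,2}$ extensions of $v\&u$, $u$ and $v\#u$, the half-disk Jacobian integrals add and subtract exactly (the reflection $U_+(\bar z)$ flips the sign of the Jacobian), and \eqref{eq:jump} follows with no limiting argument at all.

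Your primary route, approximation by $C^1$ maps built from a common pair $(u_n,v_n)$ with matching values at $\pm1$, is plausible but the matching is exactly the hard part and is not constructed: averaging on each closed arc separately does not yield equal endpoint values, and mollifying $v\&u$ and $v\#u$ separately does not produce approximants assembled from the \emph{same} pair, which you need in order to invoke the classical identity for the approximants and pass all three degrees to the limit simultaneously. Saying that the finiteness of the cross double integral ``is precisely the quantitative obstruction to the trace mismatch'' is a heuristic, not an argument; if you tried to make it precise you would most likely be led back to an interface construction of the type the paper uses.
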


\begin{proof}
Consider $\gamma:[-1,1]\times\{0\}\to\sph^1_-$ given by $\gamma(x,0)=e^{i\frac{\pi}{2}(x-1)}$. Clearly $\gamma$ is a diffeomorphism and
\begin{equation}\label{diff}
|\gamma'|\equiv\frac{\pi}{2},\quad |(\gamma^{-1})'|\equiv\frac{2}{\pi}.
\end{equation}
On $[-1,1]\times \{0\}$ define the function
$$u^*(x,0)=u(\gamma(x,0))=u(e^{i\frac{\pi}{2}(x-1)}).$$
Now define the maps $u^*_\pm :\partial D^2_\pm \to \sph^1$
\[
\begin{cases}
u^*_\pm :=u&\text{on }\sph^1_\pm\\
u^*_\pm =u^*&\text{on } (-1,1)\times \{0\},
\end{cases}
\]
and $v^*:\partial D^2_+\to \sph^1$
\[
\begin{cases}
v^*=v&\text{on }\sph^1_+\\
v^*=u^*&\text{on } (-1,1)\times \{0\}.
\end{cases}
\]
Since  $|(x,0)-(\xi,0)| \ge \frac{2}{\pi}|\gamma(x,0)-\gamma(\xi,0)|$   for $x, \xi\in [-1,1]$, also using \eqref{diff}, we get 
\begin{equation}\label{stimau*}
\begin{split}
\int_{-1}^{1}\int_{-1}^{1}\frac{|u^*(x,0)-u^*(\xi,0)|^2}{|(x,0)-(\xi,0)|^2}dxd\xi
&\le \frac{\pi^2}{4} \int_{-1}^{1}\int_{-1}^{1}\frac{|u(\gamma(x,0))-u(\gamma(\xi,0))|^2}{|\gamma(x,0)-\gamma(\xi,0)|^2}dxd\xi\\
&=  \int_{\sph^1_-}\int_{\sph^1_-}\frac{|u(e^{is})-u(e^{it})|^2}{|e^{is}-e^{it}|^2}dsdt.
\end{split}
\end{equation}
We now claim
\begin{align}\label{claimgamma0}
|e^{is}-(x,0)|&\ge \frac{|e^{is}-\gamma(x,0)|}{2},\quad \text{for }e^{is}\in \sph^1_-, \, x\in [-1,1],\\
\label{claimgamma}
|e^{is}-(x,0)|&\ge \frac{|e^{is}-\gamma(x,0)|}{4},\quad \text{for }e^{is}\in \sph^1_+,\ x\in [-1,1].
\end{align}
To prove \eqref{claimgamma0}, by symmetry it suffices to consider the case $-\frac{\pi}{2}\le s\le 0$. We consider $2$ cases.

\noindent\textbf{Case 1:} $0\le \frac{\pi}{4}(1-x)\le -s\le\frac{\pi}{2}$. 
We have
\begin{equation}\label{eqgamma}
|e^{is}-(x,0)|\ge |\sin s| \ge \frac{2}{\pi}|s|\ge\frac{|s|}{2}
\end{equation}
(the second inequality follows from showing that $f(t)=\frac{\sin t}{t}$ is decreasing for $0<t\le \frac{\pi}{2}$).
Moreover, estimating $|e^{it_1}-e^{it_2}|\le |t_1-t_2|$, we get
$$|e^{is}-\gamma(x,0)|\le \left|s+\frac{\pi}{2}(1-x)\right|\le |s|\le 2  |e^{is}-(x,0)|.$$
\noindent\textbf{Case 2:} $-s\le \frac{\pi}{4}(1-x)\le \frac{\pi}{2}$. We have
\begin{equation}\label{eqgamma2}
|e^{is}-(x,0)|\ge 1-x,
\end{equation}
since the circle centred at $(x,0)$ of radius $1-x$ is contained $D^2$ and $e^{is}\in \partial D^2$. Then
$$|e^{is}-\gamma(x,0)|\le \left|s+\frac{\pi}{2}(1-x)\right|\le \frac{\pi}{2}(1-x) \le  2|e^{is}-(x,0)|.$$
Then \eqref{claimgamma0} is proven. In order to prove \eqref{claimgamma}, again by symmetry, it suffices to prove it for $0\le s\le \frac{\pi}{2}$ and we consider three cases.


\noindent\textbf{Case 1:} $0\le x\le 1$, $ \frac{\pi}{2}(1-x)\le s\le \frac{\pi}{2}$.
As before we have \eqref{eqgamma}.
Moreover $|e^{is}-\gamma(x,0)|\le s+\frac{\pi}{2}(1-x)$.  Hence, by our assumption,
$$\frac{|e^{is}-\gamma(x,0)|}{4}\le \frac{s}{4}+ \frac{\pi}{8}(1-x)\le \frac{s}{4}+\frac{s}{4}\le |e^{is}-(x,0)|.$$

\noindent\textbf{Case 2:} $0\le x\le 1$, $0\le s\le \frac{\pi}{2}(1-x)\le \frac{\pi}{2}$. We have \eqref{eqgamma2} as before. Then
$$|e^{is}-\gamma(x,0)|\le s+\frac{\pi}{2}(1-x)\le \pi (1-x)\le 4(1-x)\le 4 |e^{is}-(x,0)|.$$

\noindent\textbf{Case 3:} $-1\le  x< 0$, $0\le s\le \frac{\pi}{2}$. Since
$|e^{is}-(x,0)|\ge 1$, we have
$$|e^{is}-\gamma(x,0)|\le 2 \le  2 |e^{is}-(x,0)|. $$
Then also \eqref{claimgamma} is proven.

\medskip

Now \eqref{claimgamma0} together with \eqref{diff} gives
\[\begin{split}
\int_{[-1,1]\times \{0\}}\int_{\sph^1_-} \frac{|u^*_-(e^{is})-u^*_-(x,0)|^2}{|e^{is}-(x,0)|^2}ds dx&\le {4} \int_{[-1,1]\times \{0\}}\int_{\sph^1_-} \frac{|u(e^{is})-u(\gamma(x,0))|^2}{|e^{is}-\gamma(x,0)|^2}dsdx\\
&= \frac{8}{\pi} \int_{\sph^1_-}\int_{\sph^1_-}\frac{|u(e^{is})-u(e^{it})|^2}{|e^{is}-e^{it}|^2}dsdt.
\end{split}\]
With \eqref{stimau*}, it follows that
$$[u^*_-]_{H^{\frac12,2}(\partial D^2_-)}\le C[u]_{H^{\frac12,2}(\sph^1_-)}.$$
For \eqref{claimgamma} we infer
\[\begin{split}
\int_{[-1,1]\times \{0\}}\int_{\sph^1_+} \frac{|u^*_+(e^{is})-u^*_+(x,0)|^2}{|e^{is}-(x,0)|^2}dsdx& \le 16\int_{[-1,1]\times \{0\}}\int_{\sph^1_+} \frac{|u(e^{is})-u(\gamma(x,0))|^2}{|e^{is}-\gamma(x,0)|^2}dsdx\\
&=\frac{32}{\pi}  \int_{\sph^1_-}\int_{\sph^1_+}\frac{|u(e^{is})-u(e^{it})|^2}{|e^{is}-e^{it}|^2}dsdt.
\end{split}\]
Similarly
$$\int_{[-1,1]\times \{0\}}\int_{\sph^1_+} \frac{|v^*(e^{is})-v^*(x,0)|^2}{|e^{is}-(x,0)|^2}dsdx\le \frac{32}{\pi} \int_{\sph^1_-}\int_{\sph^1_+}\frac{|v(e^{is})-u(e^{it})|^2}{|e^{is}-e^{it}|^2}dsdt.$$
Together with \eqref{stimau*}, it follows that 
$$[u^*_+]_{H^{\frac12,2}(\partial D^2_+)}\le C[u]_{H^{\frac12,2}(\sph^1)},\qquad [v^*]_{H^{\frac12,2}(\partial D^2_+)}\le C[v\&u]_{H^{\frac12,2}(\sph^1)}.$$
Let us now call $U_+\in H^{1,2}(D^2_+,\mathbb{C})$, $U_-\in H^{1,2}(D^2_-,\mathbb{C})$,  $V\in H^{1,2}(D^2_+,\mathbb{C})$ any extensions of $u^*_+$, $u^*_-$ and $v^*$, respectively, which exist thanks to Proposition \ref{trace}.
Since $U_+$, $U_-$ and $V$ have the same trace $u^*$ on $(-1,1)\times \{0\}$ they can be patched together to obtain extensions $U$, $V\&U$ and $V\#U$ of $u$, $v\&u$ and  $v\#u$, respectively, namely
\[
U=\begin{cases}
U_+&\text{in }D^2_+\\
U_-&\text{in }D^2_-,
\end{cases}
\qquad
V\&U=\begin{cases}
V&\text{in }D^2_+\\
U_-&\text{in }D^2_-,
\end{cases}
\]
and
\[
V\#U(z)=\begin{cases}
V(z)&\text{for }z\in D^2_+\\
U_+(\bar z)&\text{for }z\in D^2_-
\end{cases}
\]
Then, thanks to \eqref{degformula2}
\[\begin{split}
\deg(u\&v)&=\frac{1}{\pi}\int_{D^2} J(V\&U)\, dxdy\\
&=\frac{1}{\pi}\left(\int_{D^2_+} JV\, dxdy-\int_{D^2_+} JU_+\, dxdy\right)+ \frac{1}{\pi}\left(\int_{D^2_+} JU_+\, dxdy+\int_{D^2_-} JU_-\, dxdy\right)\\
&=\frac{1}{\pi}\int_{D^2} J(V\#U)\, dxdy+ \frac{1}{\pi}\int_{D^2} JU\, dxdy\\
&=\deg(v\#u)+\deg(u).
\end{split}\]
\end{proof}

Composing with a suitable conformal map from $D^2$ to $\R^2_+$
Proposition \ref{p:degjump} implies:

\begin{prop}\label{p:degjump2} Consider $u,v \in \dot H^{\frac{1}{2},2}(\R,\sph^1)$ and $\ve>0$ such that for
\[v\&u(x):=
\begin{cases}
v(x) &\text{for }|x|\le \ve \\
u(x) &\text{for }|x|>\ve
\end{cases}\]
and
\[v\# u(x):=
\begin{cases}
v(x) &\text{for }|x|\le \ve\\
u\big(\frac{\ve^2}{x}\big) &\text{for }|x|>\ve,
\end{cases}\]
one has $v\&u, v\#u\in \dot H^{\frac{1}{2},2}(\R,\sph^1)$. Then
\begin{equation}\label{eq:jump}
\deg(v\&u)= \deg(u)+\deg (v\#u).
\end{equation}
\end{prop}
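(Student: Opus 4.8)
The plan is to deduce Proposition \ref{p:degjump2} from Proposition \ref{p:degjump} by conjugating the whole surgery construction through the conformal identification of $\R^2_+$ with $D^2$ already introduced in the paper, namely the biholomorphic map $\Psi(z)=-i\frac{z+i}{z-i}$ sending $\cl{D^2}\setminus\{i\}$ onto $\bar\R^2_+$ (with inverse $\Phi$), whose boundary restriction is the stereographic projection $\Pi_+$. First I would observe that, up to a M\"obius reparametrization of $\R$ (equivalently, of $\sph^1$), which preserves both the $H^{\frac12,2}$-seminorm and the degree by conformal invariance and by Proposition \ref{p:equivdeg}, one may assume the interval $\{|x|\le\ve\}$ is sent by $\Pi_+^{-1}$ exactly to the upper semicircle $\sph^1_+$. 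Concretely, one picks the conformal automorphism $\phi$ of $\R^2_+$ with $\phi(\{|x|<\ve\})=\Pi_+(\sph^1_+)$; since $E(u)=E(u\circ\phi)$ and $\deg(u)=\deg(u\circ\phi)$, it suffices to prove the identity after this normalization.

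Next I would translate the two gluings. Set $\hat u:=u\circ\Pi_+\in H^{\frac12,2}(\sph^1,\sph^1)$ and $\hat v:=v\circ\Pi_+|_{\sph^1_+}$. The key point is that the Euclidean gluing $v\&u$ on $\R$ corresponds under $\Pi_+$ precisely to the gluing $\hat v\&\hat u$ on $\sph^1$ as in \eqref{defw}: both simply replace the map on the chosen half by the new datum. For the reflected gluing one must check that the inversion $x\mapsto \ve^2/x$ of $\R$ fixing $\{|x|=\ve\}$ corresponds, under $\Pi_+$, to the reflection $e^{i\theta}\mapsto e^{-i\theta}$ of $\sph^1$ across the real axis (after the normalization above). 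This is a direct computation: an inversion of $\R$ across the endpoints of an arc lifts, via a conformal map of the half-plane to the disk, to the anticonformal reflection of $\sph^1$ across that arc's endpoints — because both are the unique anticonformal involutions of the respective boundary circles fixing the two marked points and swapping the two complementary arcs. Hence $(v\# u)\circ\Pi_+=\hat v\#\hat u$ in the sense of \eqref{defuv}.

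With these identifications in place, the hypotheses $v\&u,\,v\#u\in\dot H^{\frac12,2}(\R,\sph^1)$ become, via the conformal equivalence of $\dot H^{\frac12,2}(\R,\sph^1)$ and $H^{\frac12,2}(\sph^1,\sph^1)$ recalled in the introduction, exactly the hypotheses $\hat v\&\hat u,\,\hat v\#\hat u\in H^{\frac12,2}(\sph^1,\sph^1)$ of Proposition \ref{p:degjump}. Applying that proposition gives $\deg(\hat v\&\hat u)=\deg(\hat u)+\deg(\hat v\#\hat u)$, and undoing the identification via $\deg(w)=\deg(w\circ\Pi_+)$ (Proposition \ref{p:equivdeg}) yields \eqref{eq:jump} on $\R$.

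The main obstacle I anticipate is purely bookkeeping rather than conceptual: verifying that the Euclidean inversion $x\mapsto\ve^2/x$ really does intertwine, through $\Psi$/$\Phi$, with the circle reflection $e^{i\theta}\mapsto e^{-i\theta}$ after the chosen normalization — in particular fixing the correct pair of boundary points and respecting the orientation conventions built into $\Pi_+$ and into the definitions \eqref{defw}--\eqref{defuv}. One should be slightly careful that composing with the M\"obius normalization $\phi$ does not inadvertently flip an orientation and turn a $+$ into a $-$ in \eqref{eq:jump}; tracking the sign through \eqref{degformula2}/\eqref{degformula4} (the Jacobian formula, whose sign is conformally robust) is the cleanest way to be sure. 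Everything else — the invariance of the seminorm and of the degree under the conformal maps involved — has already been established in the excerpt.
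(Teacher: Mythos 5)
Your proposal is correct and takes essentially the same route as the paper, which derives Proposition \ref{p:degjump2} from Proposition \ref{p:degjump} precisely by composing with a conformal map from $D^2$ to $\R^2_+$; your verification that the inversion $x\mapsto\ve^2/x$ intertwines with the reflection $e^{i\theta}\mapsto e^{-i\theta}$ (via $\Psi(\bar z)=1/\overline{\Psi(z)}$, so that conjugation on $\sph^1$ becomes $x\mapsto 1/x$ on $\R$ after normalization) is exactly the bookkeeping the paper leaves implicit. The only minor caveat is that under $\Pi_+$ the interval $[-\ve,\ve]$ normalizes to the \emph{lower} semicircle rather than $\sph^1_+$, but this is absorbed by the M\"obius normalization already allowed in Proposition \ref{p:degjump}.
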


\section{Proof of Theorems \ref{mainthm} and \ref{mainthm2}}

Since Theorem \ref{mainthm} follows immediately from Theorem \ref{mainthm2}, we will only need to prove the latter.

The following lemma is the crucial energy estimate, and is the analog of Lemma 2 in \cite{bc}.

\begin{lem}\label{strictestimate} Let $u\in \dot H^{\frac{1}{2},2}(\R,\sph^1)$ be such that for some $x_0\in \R$, $u$ is smooth near $x_0$, $u'(x_0)\ne 0$ and 
\begin{equation}\label{12harm}
\partial_y \tilde u(x_0,0)\perp T_{u(x_0)}\sph^1,
\end{equation}
where $\tilde u$ is the Poisson harmonic extension of $u$.
Then for every $\ve>0$ sufficiently small there exists $v \in \dot H^{\frac{1}{2},2}(\R,\sph^1)$ such that $v=u$ in $\R\setminus [x_0-2\ve,x_0+2\ve]$, $\deg v= \deg u\pm1$ and
$$E(v)< E(u)+2\pi.$$ More precisely,
$\deg v= \deg u- \mathrm{sign}(u(x_0)\wedge u'(x_0))$ and we can choose $v$ (depending on $\ve>0$) such that
$$E(v)\le E(u)+2\pi -(1-\ln 2)|u'(x_0)|^2\ve^2+o(\ve^2),\quad \text{as }\ve\to 0.$$
\end{lem}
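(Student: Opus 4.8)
The plan is to construct $v$ by gluing a small concentrated "bubble" (a rescaled inverse stereographic projection, i.e. a Möbius map) onto $u$ near $x_0$, with orientation chosen opposite to $u'(x_0)$, and to compute the energy cost using the conformal invariance of $E$ together with the fact that a full bubble carries energy exactly $2\pi$. Without loss of generality I would first normalize: by conformal invariance and a rotation of $\sph^1$, assume $x_0=0$, $u(0)=1\in\sph^1$, and (using \eqref{12harm}, i.e. that $(-\Delta)^{1/2}u(0)$ is tangent to $\sph^1$) that the harmonic extension $\tilde u$ is, to first order near $(0,0)$, an affine map of the form $\tilde u(x,y)\approx 1 + i(ax - by) + O(|(x,y)|^2)$ with $a=u'(0)\wedge u(0)\ne 0$ and $b$ determined by the half-harmonic condition — the key point being that the normal derivative $\partial_y\tilde u(0,0)$ is parallel to $u(0)=1$, so the tangential first-order behavior of $\tilde u$ on $\R\times\{0\}$ and in the $y$-direction are governed by the single number $a$. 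Then I would define $v$ on $[-2\ve,2\ve]$ by composing a standard bubble $\phi_\mu(x) = \Pi_+^{-1}(x/\mu)$ (covering $\sph^1$ once, oriented so as to decrease the degree by $\mathrm{sign}(u(0)\wedge u'(0))$) with a cutoff interpolation to $u$ on the annulus $\ve\le |x|\le 2\ve$, choosing the concentration scale $\mu = \mu(\ve)$ comparable to $\ve^2$ (so the bubble "fits" inside $[-\ve,\ve]$ with a controlled neck).

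The key steps, in order: (1) set up the normalization and record the first-order expansion of $\tilde u$ near $x_0$; (2) define $v$ piecewise — equal to $u$ outside $[-2\ve,2\ve]$, equal to a rescaled Möbius bubble on $[-\ve,\ve]$, and a linear (in a lifting/angle sense) interpolation on the two neck annuli — and check $v\in\dot H^{1/2,2}(\R,\sph^1)$; (3) compute $\deg v$ using Proposition \ref{p:degjump2} (surgery formula): the glued map has degree $\deg u$ plus the degree of the bubble-piece reflected-glued to $u$, which by the orientation choice is $-\mathrm{sign}(u(x_0)\wedge u'(x_0))$; (4) estimate $E(v)$ by splitting the energy (via the $\R^2_+$ extension and conformal invariance) into: the contribution of $u$ away from the bubble ($\le E(u) + o(\ve^2)$, since we only modified a shrinking set and $u$ is smooth there), the energy of the bubble itself (exactly $2\pi$, by Lemma \ref{l:Blas}/the computation $E(\Pi_+^{-1})=2\pi$), and the neck energy; (5) carefully estimate the cross terms and neck contribution to extract the \emph{negative} correction $-(1-\ln 2)|u'(x_0)|^2\ve^2$, optimizing over the relation between $\mu$ and $\ve$.

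The main obstacle is step (4)–(5): getting the \emph{sharp} constant $-(1-\ln 2)$ and the right power $\ve^2$ in the expansion. The energy $E$ is nonlocal, so "splitting" it is not exact — one must control the interaction (double-integral) cross terms between the bubble region and the rest of $u$, and these are exactly what produce the $O(\ve^2)$ correction; the sign of this correction (that it is negative, making the glued map strictly below $E(u)+2\pi$) is the whole content of the à la Brezis–Coron estimate and relies essentially on the half-harmonicity \eqref{12harm} — which is what allows an integration-by-parts / first-variation argument to kill the would-be $O(\ve)$ and $O(\ve^2\log\ve)$ terms and leave a clean negative $O(\ve^2)$ term. Concretely I expect to pass to the harmonic extensions, write $\tilde v = \tilde u + (\text{bubble extension}) + (\text{correction})$, expand $|\nabla\tilde v|^2 = |\nabla\tilde u|^2 + |\nabla(\text{bubble})|^2 + 2\nabla\tilde u\cdot\nabla(\text{bubble}) + \dots$, and show the cross term, after integration by parts using that $\tilde u$ is harmonic and $\partial_y\tilde u(x_0,0)\perp T_{u(x_0)}\sph^1$, contributes precisely the asserted negative quantity; tracking the cutoff on the neck annulus $[\ve,2\ve]$ is what produces the $\ln 2$. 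The weaker statement $E(v)<E(u)+2\pi$ then follows for $\ve$ small, and $\deg v = \deg u \pm 1$ from step (3).
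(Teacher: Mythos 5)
Your skeleton matches the paper's: normalize at $x_0$, pass to the harmonic extension on $\R^2_+$, insert a concentrated anti-oriented bubble (a rescaled stereographic map with scale $\lambda=\mu\ve^2$) on $B_\ve^+(0)$, interpolate on the annulus $B_{2\ve}^+\setminus B_\ve^+$, keep $\tilde u$ outside, use that the Poisson extension minimizes the Dirichlet energy to bound $E(v)$, and get the degree from the surgery formula (Proposition \ref{p:degjump2}). But there is a genuine gap in your steps (4)--(5), i.e. exactly where the lemma lives. First, your worry that ``splitting is not exact'' dissolves once you work upstairs: the extension $U_\ve$ is defined piecewise on \emph{disjoint} regions of $\R^2_+$, so $\int_{\R^2_+}|\nabla U_\ve|^2$ splits exactly into three pieces and there are no cross terms at all; consequently the integration-by-parts / first-variation cancellation you are counting on to produce the negative $O(\ve^2)$ term is not the mechanism, and it is not clear it could be made to work. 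Second, your energy budget as stated cannot close: if the bubble contributes ``exactly $2\pi$'', the outside contributes ``$\le E(u)+o(\ve^2)$'', and the neck contributes a positive $O(\ve^2)$ amount, you get $E(v)\ge E(u)+2\pi+c\ve^2$, the wrong sign. The two sources of gain you are missing are (i) the energy of $\tilde u$ \emph{removed} from $B_{2\ve}^+(0)$, which equals $2\pi\ve^2(a^2+b^2)+o(\ve^2)$ by the first-order expansion, and (ii) the fact that the \emph{truncated} bubble on $B_\ve^+(0)$ has energy $2\pi-4\pi\mu^2\ve^2+o(\ve^2)$, strictly below $2\pi$, because being anticonformal its energy is twice the area of its image, which omits a region of area $2\pi\mu^2\ve^2$ of the disk. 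These two negative terms must then beat the explicit neck energy $2\pi\ve^2[a^2+b^2-2\mu^2+(a^2+b^2+8\mu^2-4a\mu-4b\mu)\ln 2]$ (the $\ln 2$ coming from $\int_\ve^{2\ve}r^{-1}dr$), and the final optimization $\mu=\max\{a/2,b/2\}$ is what yields the sharp $-(1-\ln 2)\max\{a^2,b^2\}\ve^2\le -(1-\ln2)a^2\ve^2$.

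Two smaller points. The hypothesis \eqref{12harm} is not used for any cancellation by parts: it is used only to guarantee that $\nabla\tilde u(x_0,0)$ is diagonal in the adapted frame (tangential derivative $(a,0)$, normal derivative $(0,b)$), which is what makes the componentwise affine-in-$r$ matching $A_\ell(\theta)r+B_\ell(\theta)$ on the neck solvable with the clean coefficients that produce the formula above. And you still owe a verification that the reflected-glued map $w\# u$ lies in $\dot H^{\frac12,2}$ before Proposition \ref{p:degjump2} can be invoked; the paper checks this by splitting the double integral and using the smoothness of $u$ near $x_0$ together with the inversion $x\mapsto 4\ve^2/x$.
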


\begin{proof} Using the harmonic extension, we are able to closely follow \cite[pp. 206-207]{bc}. Up to a translation and a rotation we can set $x_0=0$, $u(x_0)=-i=(0,-1)$. These assumptions and \eqref{12harm} give
\begin{equation}\label{nablatildeu}
\nabla\tilde u(0,0)=\begin{pmatrix}
a&0\\
0&b
\end{pmatrix}
\end{equation}
where $u'(0)=(a,0)\in T_{(0,-1)}\sph^1=\R\times\{0\}$, $\partial_y\tilde u(0,0)=(0,b)\perp T_{(0,-1)}\sph^1$. Up to a reflection, we can also assume $a>0$.

We now consider the inverse stereographic projection from the North pole
$$\Pi^{-1}_+:\R\to \sph^1,\quad \Pi^{-1}_+(x)=\left( \frac{2x}{1+x^2}, \frac{-1 +x^2}{1+x^2}\right),$$ which 
can be extended to the biholomorphic map
$\Phi:\cl{\R^{2}_+}\to \cl{D^2}\setminus\{i\},$
which, using the complex notation $z=x+iy$, is
\begin{equation}\label{Phi}
\Phi(z)=\frac{z-i}{1-iz}=\left( \frac{2x}{(1+y)^2+x^2}, \frac{-1+x^2+y^2}{(1+y)^2+x^2}\right).
\end{equation}
Notice that $\Phi(\cl{\R^2_+})= \cl{D^2}\setminus\{i\}$.
Its inverse is
\begin{equation}\label{Psi}
\Psi:\cl{D^2} \setminus\{i\}\to \cl{\R^{2}_+},\quad \Psi(z)=\Phi^{-1}(z)=-i\frac{z+i}{z-i}.
\end{equation}
Similarly, the inverse stereographic projection from the South pole
$$\Pi_-^{-1}(x)=\left( \frac{2x}{1+x^2}, \frac{1-x^2}{1+x^2}\right)$$
can be extended to the anti-biholomorphic map $\bar\Phi:\cl{\R^{2}_+}\to \cl{D^2}\setminus\{-i\}$,
$$
\bar \Phi(z)=\frac{\bar z+i}{1+i\bar z}=\left( \frac{2x}{(1+y)^2+x^2}, \frac{1-x^2-y^2}{(1+y)^2+x^2}\right).
$$
We now set for $\lambda=\mu\ve^2$, with $\ve>0$ sufficiently small and $\mu>0$ to be fixed depending on $a$ and $b$ only, 
\begin{equation*}
U_\ve(z)=
\begin{cases}
\bar \Phi_\lambda(z):=\bar \Phi(\tfrac{z}{\lambda})=\frac{\frac{\bar z}{\lambda}+i}{1+i\frac{\bar z}{\lambda}} & \text{in }B_\ve^+(0)\\
\Phi\begin{pmatrix}
A_1r+B_1\\
A_2r+B_2
\end{pmatrix}&\text{in } B_{2\ve}^+(0)\setminus B_{\ve}^+(0)\\
\tilde u(z)&\text{in }\R^2_+\setminus B_{2\ve}^+(0)
\end{cases}
\end{equation*}
with $A_1,A_2,B_1,B_2$ to be determined as functions of $\ve,\theta$ to make $U_\ve$ continuous. In particular, since $\Psi=\Phi^{-1}$, we want
\begin{equation}\label{eq2ve}
\begin{pmatrix}
A_1(\theta)r+B_1(\theta)\\
A_2(\theta)r+B_2(\theta)
\end{pmatrix} =\Psi(\tilde u(re^{i\theta})) \quad \text{for }r=2\ve,
\end{equation}
and
\begin{equation}\label{eqve}
\begin{pmatrix}
A_1(\theta)r+B_1(\theta)\\
A_2(\theta)r+B_2(\theta)
\end{pmatrix} =\Psi(\bar \Phi_\lambda(re^{i\theta}))\quad \text{for }r=\ve.
\end{equation}
Notice that this implies that $A_2(0)=A_2(\pi)=B_2(0)=B_2(\pi)=0$, since $\Psi\circ \tilde u$ and $\Psi\circ \bar \Phi_\lambda$ send $\R\times\{0\}$ into itself. It follows in particular that $U_\ve$ sends $\R\times \{0\}$ into $\sph^1$.

Using polar coordinates at $(0,0)$, from \eqref{nablatildeu} we get (identifying $i$ with $(0,-1)$, with a little abuse of notation) the Taylor expansion 
\begin{align*}
\tilde u(re^{i\theta})&=-i+\begin{pmatrix}a&0\\0&b\end{pmatrix}\begin{pmatrix}r\cos\theta\\ r\sin\theta\end{pmatrix}+O(r^2)=-i+\begin{pmatrix}ar\cos\theta\\br\sin\theta\end{pmatrix}+O(r^2).
\end{align*}
We then compute from \eqref{Psi}
$$\Psi(z)=\frac{1}{2}(z-(-i))+O((z-(-i))^2),\quad \text{as }z\to -i, $$
so that
$$\Psi(\tilde u(re^{i\theta}))=\frac{1}{2}\begin{pmatrix}ar\cos\theta\\br\sin\theta\end{pmatrix}+O(r^2)\quad \text{as }r\to 0.$$
This and \eqref{eq2ve} give
\begin{equation}\label{sys1}
\begin{cases}
2\ve A_1(\theta)+B_1(\theta)=a\ve\cos\theta+O(\ve^2)\\
2\ve A_2(\theta)+B_2(\theta)=b\ve\sin\theta+O(\ve^2).
\end{cases}
\end{equation}
Moreover
\begin{equation}\label{barPhi0}
\bar \Phi_\lambda(z)=-i+2\frac{\lambda}{\bar z}+O(\ve^2),\quad \text{for }|z|=\ve,
\end{equation}
hence 
$$\Psi(\bar  \Phi_\lambda(\ve e^{i\theta}))= \frac{\lambda}{\ve e^{-i\theta}}+O(\ve^2) =\mu\ve\begin{pmatrix}\cos\theta\\\sin\theta\end{pmatrix}+O(\ve^2),$$
and together with \eqref{eqve} we get
\begin{equation}\label{sys2}
\begin{cases}
\ve A_1(\theta)+B_1(\theta)=\mu\ve\cos\theta+O(\ve^2)\\
\ve A_2(\theta)+B_2(\theta)=\mu\ve\sin\theta+O(\ve^2).
\end{cases}
\end{equation}
Solving \eqref{sys1}-\eqref{sys2} yields
\begin{equation}\label{AB}
\begin{cases}
A_1(\theta)=(a-\mu)\cos\theta+O(\ve)\\
A_2(\theta)=(b-\mu)\sin\theta+O(\ve)\\
B_1(\theta)=\ve(2\mu-a)\cos\theta+O(\ve^2)\\
B_2(\theta)=\ve(2\mu-b)\sin\theta+O(\ve^2).
\end{cases}
\end{equation}
We now easily estimate
\begin{equation}\label{est1}
\begin{split}
\int_{\R^2_+\setminus B_{2\ve}^+(0)}|\nabla U_\ve|^2dxdy&= E(u)-\int_{B_{2\ve}^+(0)}|\nabla \tilde u|^2dxdy=E(u)-2\pi\ve^2(a^2+b^2)+o(\ve^2).
\end{split}
\end{equation}
Observing that $\bar\Phi_\lambda$ is anticonformal, so that $|\nabla\bar\Phi_\lambda|^2=2|J\bar\Phi_\lambda|$, considering that $\bar\Phi_\lambda$ is a diffeomorphism of $\R^2_+$ onto $D^2$, and taking \eqref{barPhi0} into account, we see that $\bar\Phi_\lambda(B_\ve^+(0))$ is the unit disk minus a region of area $2\pi\mu^2\ve^2+o(\ve^2)$, so that
\begin{equation}\label{est2}
\begin{split}
\int_{B_\ve^+(0)}|\nabla\bar\Phi_\lambda|^2dxdy&=2\mathrm{Area}\(\bar\Phi_\lambda(B_\ve^+(0))\) =2\pi-4\pi\mu^2\ve^2+o(\ve^2).
\end{split}
\end{equation}
From \eqref{Phi} we obtain $\nabla\Phi(z)= 2Id+o(1)$ as $z\to 0$, and by the chain rule
we finally obtain 
\begin{equation}\label{est3}
\begin{split}
\int_{B_{2\ve}^+(0)\setminus B_\ve^+(0)} |\nabla U_\ve|^2dxdy&=4(1+o(1))\int_{B_{2\ve}^+(0)\setminus B_\ve^+(0)}\left|\nabla \begin{pmatrix}
A_1(\theta)r+B_1(\theta)\\
A_2(\theta)r+B_2(\theta)
\end{pmatrix}\right|^2dxdy\\
&=4(1+o(1))\int_0^{\pi}\int_\ve^{2\ve}\sum_{\ell=1}^2\left[ A_\ell^2+\( A_\ell' +\frac{B_\ell'}{r}\)^2\right]rdrd\theta\\
&=2\pi\ve^2[a^2+b^2-2\mu^2+(a^2+b^2+8\mu^2-4a\mu-4b\mu)\ln 2]+o(\ve^2).
\end{split}
\end{equation}
Summing \eqref{est1}, \eqref{est2} and \eqref{est3} we get 
$$\int_{\R^2_+}|\nabla U_\ve|^2dxdy=E(u)+2\pi-2\pi\ve^2(4\mu^2-(8\mu^2+a^2+b^2-4a\mu-4b\mu)\ln2)+o(\ve^2). $$
Then, as in \cite{bc}, we can choose $\mu=\max\{a/2,b/2\}$ to get  
\[\begin{split}
4\mu^2-(8\mu^2+a^2+b^2-4a\mu-4b\mu)\ln2&=\max\{a^2,b^2\}-(a-b)^2\ln2\\
&\ge (1-\ln 2)\max\{a^2,b^2\}\ge (1-\ln 2)a^2>0.
\end{split}\]
Now, setting $v=u_\ve=U_\ve|_{\R\times\{0\}}$ and recalling \eqref{equiv1},
we conclude
$$E(v)\le  \int_{\R^2_+}|\nabla U_\ve|^2dxdy\le E(u)+2\pi-(1-\ln 2)a^2\ve^2+o(\ve^2)< E(u)+2\pi,$$
for some $\ve>0$ small enough. The finiteness of $E(v)$ implies that $v\in \dot H^{\frac12,2}(\R,\mathbb{C})$ and we have already noticed that $U_\ve|_{\R\times\{0\}}$ takes values into $\sph^1$, so that $v\in \dot H^{\frac12,2}(\R,\sph^1)$.

Let us now set
\begin{align*}
w\& u(x):=\left\{ \begin{array}{ll}w(x)&\quad\text{for }|x|\leq2\ve \\ u(x)&\quad\text{for }|x|\geq2\ve,  \end{array} \right. \quad w\# u(x):=\left\{ \begin{array}{ll}w(x)&\quad\text{for }|x|\leq2\ve\\ u(\frac{4\ve^2}{x})&\quad\text{for }|x|\geq2\ve,  \end{array} \right.    \end{align*} so that $v=w\&u$, where  $w(x):=U_\ve(x,0)$ for $|x|\leq 2\ve.$ 
Notice that $w\#u \in \dot H^{\frac12,2}(\R,\sph^1)$, since 
\begin{align*}
[w\#u]_{H^{\frac12,2}(\R)}^2&= \int_\R\int_\R\frac{|w\#u(x)-w\#u(y)|^2}{|x-y|^2}dxdy
\\
&\leq 2\left( \int_{I_{2\delta}}\int_{I_{2\delta}} +\int_{\R\setminus I_{\delta}}\int_{ \R\setminus I_\delta}  +\int_{\R\setminus I_{2\delta}}\int_{ I_\delta}     \right)\frac{|w\#u(x)-w\#u(y)|^2}{|x-y|^2}dxdy , \end{align*}
where $I_\delta:=(-\delta,\delta)$, $\delta>0$ is fixed such that $u$ is smooth in $I_{3\delta}$, and $\ve\in (0,\delta/2)$. Then $w\#u$ is Lipschitz continuous on $I_{2\delta}$, and in particular the first integral on the right-hand side above is finite. The finiteness of the second integral follows by changing the variable $x\mapsto \frac1x,\,y\mapsto\frac1y$, using that $w\#u(x)=u(\frac{4\ve^2}{x})$ for $x\in \R\setminus I_\delta$ and $u\in \dot H^{\frac12,2}(\R,\sph^1)$. For the convergence of the third integral, one can use $|x-y|\geq \frac{|y|}{2}$ for $(x,y)\in I_\delta\times(\R\setminus I_{2\delta})$, while the enumerator is bounded.  

Since $u$ is smooth near the origin, it follows from the estimates on $A_1$ and $B_1$ that $\sph^1_+$  is covered by the   function  $w\#u $ only once in clockwise direction,  and hence  $\deg(w\#u)=-1$. Therefore, by Proposition \ref{p:degjump2} $\deg(v)=\deg(u)-1.$
\end{proof}

\begin{cor}\label{c:est} Let $u_0$ minimize $E$ in $\M{E}_g$, with $g\in \dot H^{\frac12,2}(\R,\sph^1)$ not constant in $\R\setminus I$. Then there exists $x_0\in I$ such that $u'(x_0)\ne 0$, and setting $k=\deg(u_0)-\mathrm{sign}(u(x_0)\wedge u'(x_0))$, we have
\begin{equation}\label{eqc:est}
\inf_{\M{E}_{g,k}} E< \inf_{\M{E}_g}E+2\pi.
\end{equation}
\end{cor}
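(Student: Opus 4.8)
The plan is to deduce Corollary \ref{c:est} directly from Lemma \ref{strictestimate}. First I would argue that the minimizer $u_0$ cannot be constant in all of $\R$: if it were, then by the boundary condition $g=u_0$ would be constant in $\R\setminus I$, contrary to hypothesis. Since $u_0$ is half-harmonic in $I$ it is smooth there (by \cite{DLR}, as recalled after \eqref{eq12harm}), hence if $u_0'\equiv 0$ in $I$ then $u_0$ would be constant on the open interval $I$; combined with the boundary datum and the continuity/regularity of $u_0$ up to (and slightly beyond) $\partial I$ coming from the smoothness of $g$ near $\partial I$, one would get $u_0$ constant in $\R$, again a contradiction. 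Actually the cleanest way is: if $u_0'\equiv 0$ on $I$ then $u_0$ is constant on $I$, and since $u_0\in \dot H^{\frac12,2}(\R,\sph^1)$ with $u_0=g$ on $\R\setminus I$ and $g$ smooth near $\partial I$ by assumption in Theorem \ref{mainthm2} (or by the regularity theory), the trace of $u_0$ from inside $I$ at the endpoints $\pm1$ equals both the constant value and $g(\pm1)$; this forces $g$ to be constant in $\R\setminus I$, a contradiction. Hence there exists $x_0\in I$ with $u_0'(x_0)\ne 0$.

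Next, I would verify the hypotheses of Lemma \ref{strictestimate} at this $x_0$. Since $x_0\in I$, $u_0$ is smooth near $x_0$ and $u_0'(x_0)\ne 0$ by the previous step. The orthogonality condition \eqref{12harm}, namely $\partial_y\tilde u_0(x_0,0)\perp T_{u_0(x_0)}\sph^1$, is precisely the half-harmonic map equation \eqref{eq12harm} satisfied by the minimizer $u_0$, evaluated at the interior point $x_0$. Thus Lemma \ref{strictestimate} applies: setting $k=\deg(u_0)-\mathrm{sign}(u_0(x_0)\wedge u_0'(x_0))$, for every sufficiently small $\ve>0$ there is $v_\ve\in \dot H^{\frac12,2}(\R,\sph^1)$ with $v_\ve=u_0$ on $\R\setminus[x_0-2\ve,x_0+2\ve]$, $\deg v_\ve=\deg u_0-\mathrm{sign}(u_0(x_0)\wedge u_0'(x_0))=k$, and
\[
E(v_\ve)\le E(u_0)+2\pi-(1-\ln 2)|u_0'(x_0)|^2\ve^2+o(\ve^2)\quad\text{as }\ve\to 0.
\]

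Finally I would conclude. Since $v_\ve=u_0=g$ on $\R\setminus I$ (because $[x_0-2\ve,x_0+2\ve]\subset I$ for $\ve$ small, as $x_0\in I$ is an interior point), we have $v_\ve\in\M{E}_g$, and since $\deg v_\ve=k$ we in fact have $v_\ve\in\M{E}_{g,k}$. Therefore
\[
\inf_{\M{E}_{g,k}}E\le E(v_\ve)\le E(u_0)+2\pi-(1-\ln 2)|u_0'(x_0)|^2\ve^2+o(\ve^2).
\]
Choosing $\ve>0$ small enough that the error term $o(\ve^2)$ is dominated by $\tfrac12(1-\ln 2)|u_0'(x_0)|^2\ve^2$, the right-hand side is strictly less than $E(u_0)+2\pi=\inf_{\M{E}_g}E+2\pi$ (using that $u_0$ is a minimizer in $\M{E}_g$), which is exactly \eqref{eqc:est}.

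This corollary is essentially a packaging of Lemma \ref{strictestimate}, so there is no deep obstacle; the only point requiring a little care is the first step — ruling out that the minimizer $u_0$ is constant on $I$ — which needs the boundary datum and the interior regularity to interact correctly near the endpoints $\partial I$, and one should note that $[x_0-2\ve,x_0+2\ve]$ stays compactly inside $I$ so that the surgery does not disturb the boundary condition $u_0=g$ on $\R\setminus I$.
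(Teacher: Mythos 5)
Your application of Lemma \ref{strictestimate} and the concluding inequality are exactly the paper's argument, but your first step contains a genuine gap. You need to rule out that $u_0'\equiv 0$ on $I$, i.e.\ that $u_0$ is constant on $I$; your proposed argument is that the trace of $u_0$ at the endpoints $\pm 1$ would then equal both the constant value and $g(\pm 1)$, and that ``this forces $g$ to be constant in $\R\setminus I$.'' That last implication is a non sequitur: matching a constant value at the two endpoints says nothing about $g$ away from $\partial I$ (take $g$ equal to a constant $P$ near $\pm1$ but winding around $\sph^1$ far from $I$ --- then $u_0\equiv P$ on $I$ is perfectly compatible with your trace condition while $g$ is non-constant in $\R\setminus I$). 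Moreover, $g$ is only assumed to lie in $\dot H^{\frac12,2}(\R,\sph^1)$, so it need not be smooth near $\partial I$ nor even admit a trace there (cf.\ Proposition \ref{example2}).

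The correct statement you need is Lemma \ref{l:unicont}: a map that is both half-harmonic and constant on $I$ is constant on all of $\R$. This is not a local trace-matching fact but a unique continuation result exploiting the nonlocality of the equation: after a rotation one writes $u_0=v+iw$ with $u_0\equiv i$ on $I$, deduces $v=(-\Delta)^{\frac12}v=0$ on $I$, and then unique continuation for the fractional Laplacian (or, via the harmonic extension and Schwarz reflection, for harmonic functions) forces $v\equiv 0$ on $\R$, whence $w\equiv 1$ since $w\in\dot H^{\frac12,2}(\R,\R)$ takes values in $\{\pm1\}$. With that lemma in place, the remainder of your proof is correct and coincides with the paper's.
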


\begin{proof} Since $u_0$ is half-harmonic, it is smooth in $I$ by \cite{DLR}. By Lemma \ref{l:unicont} below, $u_0$ in not constant in $I$, hence there exists $x_0\in I$ such that $u'(x_0)\ne 0$. By \cite[Sec. 1]{milsir} $u_0$ satisfies condition \eqref{12harm}, hence Lemma \ref{strictestimate} applies.
\end{proof}

\noindent\emph{Proof of Theorem \ref{mainthm2} (completed).} Given $u_0$ minimizing $E$ in $\M{E}_g$, by Corollary \eqref{c:est} we can take $x_0\in I$ such that $u_0'(x_0)\ne 0$ and set $k=\deg(u_0)-\mathrm{sign}(u(x_0)\wedge u'(x_0))$. Consider a minimizing sequence $(u_n)\subset \M{E}_{g,k}$. Up to a subsequence $u_n\rightharpoonup  u^*$ weakly in $\dot H^{\frac12,2}(\R,\sph^1)$ where $ u^* \in  \M{E}_{g,\ell}$ for some $\ell\in\mathbb{Z}$. According to Proposition \ref{p:jump}
$$\inf_{\M{E}_g}E\le E(u^*)\le \liminf_{n\to\infty} E(u_n)-2\pi |k-\ell|= \inf_{\M{E}_{g,k}}E -2\pi |k-\ell|.$$
If $k\ne\ell$ this contradicts \eqref{eqc:est} of Corollary \ref{c:est}, hence $\ell=k$ and $ u^*\in \M{E}_{g,k}$ is the desired minimizer of $E$ in $\M{E}_{g,k}$.\hfill$\square$



\begin{lem}\label{l:unicont} Let $u\in \dot H^{\frac12,2}(\R,\sph^1)$ be both half-harmonic and constant in $I=(-1,1)$. Then $u$ is constant in all of  $\R$.
\end{lem}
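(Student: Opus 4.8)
The plan is to use a unique-continuation-type argument for the half-Laplacian via its harmonic extension. Suppose $u$ is half-harmonic in $I$ and $u\equiv P\in\sph^1$ in $I$. Since $u$ is half-harmonic in $I$, by the regularity theory of Da Lio--Rivi\`ere \cite{DLR} it is smooth in $I$; moreover, equation \eqref{eq12harm} reads $(-\Delta)^{\frac12}u=-\partial_y\tilde u(x,0)\perp T_{u(x)}\sph^1=T_P\sph^1$ for $x\in I$. Because $u$ is \emph{constant} equal to $P$ on $I$, for $x\in I$ we also have $\partial_x\tilde u(x,0)=u'(x)=0$, which lies in $T_P\sph^1$; combining this with the orthogonality, the full gradient of $\tilde u$ on $I\times\{0\}$ is constrained. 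The key point I would extract is that the harmonic extension $\tilde u:\R^2_+\to\overline{D^2}$, together with its normal derivative, has a prescribed, very degenerate behaviour on the open segment $I\times\{0\}$: writing $\tilde u=\tilde u_1+i\tilde u_2$ and rotating so that $P=-i$, one gets $\tilde u(x,0)\equiv(0,-1)$ and $\partial_y\tilde u(x,0)=(0,b(x))$ for $x\in I$, so in particular $\tilde u_1$ vanishes to first order (both Dirichlet and Neumann data vanish) on $I\times\{0\}$.

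Next I would invoke a Cauchy--Schwarz/holomorphic reflection argument: since $\tilde u_1$ is harmonic in $\R^2_+$ with $\tilde u_1=0=\partial_y\tilde u_1$ on the open segment $I\times\{0\}$, the odd reflection across $I\times\{0\}$ shows $\tilde u_1$ extends harmonically across $I$, and having vanishing Cauchy data on a segment forces $\tilde u_1\equiv 0$ in a neighbourhood, hence (by analyticity of harmonic functions) $\tilde u_1\equiv 0$ on all of $\R^2_+$, so $\tilde u$ is purely imaginary; since $|\tilde u|\le 1$ and $\tilde u$ is harmonic with boundary values in $\sph^1\subset\{|\cdot|=1\}$... wait, that forces $\tilde u_2\equiv$ constant? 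Here one must be careful: $\tilde u$ takes values in the \emph{closed disk}, and on the boundary $|\tilde u(x,0)|=1$. If $\tilde u_1\equiv0$, then $\tilde u=i\tilde u_2$ with $|\tilde u_2(x,0)|=1$ a.e. $x\in\R$, i.e. $u\equiv\pm i$ on all of $\R$ with, a priori, jumps; but $u\in\dot H^{\frac12,2}(\R,\sph^1)$ takes only the two values $\pm i$ and has finite fractional seminorm, which by the standard characterization of $\dot H^{\frac12,2}$ of $\{0,1\}$-valued functions forces $u$ to be a.e. constant. Thus $u\equiv P$ on $\R$.

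The main obstacle is the reflection/unique-continuation step: justifying that $\tilde u_1$ genuinely has vanishing Cauchy data on $I\times\{0\}$ (the Neumann datum $\partial_y\tilde u$ on $I\times\{0\}$ is a priori only an $H^{-1/2}$-type object, so one needs the smoothness of $u$ in $I$ from \cite{DLR} and elliptic boundary regularity of $\tilde u$ up to $I\times\{0\}$ to make $\partial_y\tilde u(\cdot,0)$ a genuine smooth function there), and then running the Schwarz reflection to get harmonicity across $I$ followed by real-analytic unique continuation from the segment to all of $\R^2_+$. An alternative, perhaps cleaner, route that I would also consider is purely spectral/non-local: work directly with $(-\Delta)^{\frac12}$ on $\R$ and the fact that $(-\Delta)^{\frac12}u\perp T_u\sph^1$ together with $u\equiv P$ on $I$ gives that $w:=u-P$ satisfies $(-\Delta)^{\frac12}w=0$ on $I$ (the component tangent to $T_P\sph^1$) in a suitable weak sense with $w\equiv0$ on $I$; then one applies the known unique continuation principle for the fractional Laplacian (e.g. via the Caffarelli--Silvestre extension), which again reduces to the same reflection argument. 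I expect the write-up to follow whichever of these is shortest; the harmonic-extension version seems most in the spirit of the rest of the paper, since the authors consistently work with $\tilde u$.
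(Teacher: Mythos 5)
Your proposal is correct and follows essentially the same route as the paper: after rotating so that $u\equiv P$ on $I$, one observes that the component of $u$ tangent to $T_P\sph^1$ has vanishing Dirichlet and Neumann data on $I\times\{0\}$ for its harmonic extension, extends harmonically across $I$ (you via odd reflection, the paper via extension by zero, which is the same mechanism), and vanishes identically by unique continuation for harmonic functions; the conclusion that the remaining $\{\pm1\}$-valued component of finite $\dot H^{\frac12,2}$ seminorm must be constant is exactly the paper's final step (citing \cite[Corollary 6.2]{BM}). You also correctly flag the only delicate point, namely that smoothness of $u$ in $I$ and elliptic boundary regularity are needed to make the Neumann datum a genuine function on $I\times\{0\}$.
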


\begin{proof} Up to a rotation, we may assume that $u\equiv i=(0,1)$ in $I$. Writing $u=v+iw$, with $v$ and $w$ real valued, we obtain from \eqref{eq12harm} that
\begin{equation}\label{v12harm}
v=(-\Delta)^\frac12 v=0\quad \text{in }I.
\end{equation}
This implies that $v\equiv 0$ everywhere in $\R$ by the unique continuation property of fractional Laplacians, see e.g. \cite[Thm. 1.2]{GSU}. An elementary proof of this fact, in this simple situation, can also be obtained by considering the Poisson harmonic extension $\tilde v$ of $v$ (which is smooth up to $I\times \{0\}$ by elliptic regularity).
Then by \eqref{v12harm}
$$\frac{\de \tilde v(x,y)}{\de y}\bigg|_{y=0}=0,\quad \text{for }x\in I.$$
Considering $B_-:=\{(x,y)\in \R^2: x^2+y^2<1,\,y<0\}$ and
\[
\tilde{\tilde v}=\begin{cases}
\tilde v & \text{in }\R^2_+\\
0& \text{in }B_{-},
\end{cases}
\]
we obtain that $\tilde{\tilde v}$ is harmonic, hence it vanishes identically in $\R^2_+\cup B_-$ by the unique continuation property of harmonic maps. Then $v\equiv 0$ in $\R$.

Since $v^2+w^2=1$ a.e., it follows that $w(x)\in \{-1,1\}$ for a.e.   $x$, and since $w\in \dot H^{\frac12,2}(\R,\R)$, $w$ must be constant, hence $w\equiv 1$ and $u\equiv (0,1)$,  see \cite[Corollary 6.2]{BM}.
\end{proof}

\section{Proof of Theorem \ref{thmex}}

Let us first recall a few facts about Blaschke products.
Blaschke products are holomorphic maps $H:\cl{D^2}\subset \mathbb{C}\to \cl{D^2} \subset \mathbb{C}$ of the form
$$H(z)= e^{i\theta_0}\prod_{j=1}^k \frac{z-a_j}{1-\bar a_j z},\quad a_1,\dots, a_k\in D^2,\, \theta_0\in \R.$$
They are the only holomorphic maps of $\cl{D^2}$ into itself sending $\sph^1$ into itself.
Then, by conformality, as in the degree formula \eqref{degformula2}
$$\int_D |\nabla H|^2 dx dy = 2 \int_{D} JH dxdy =2\pi \deg H|_{\sph^1}=2\pi k.$$
By invariance of the Dirichlet energy under conformal trasformations, we also have for    $h(x):=H\circ \Pi_+^{-1}$ and its Poisson extension $\tilde h =H\circ\Phi$
$$E(h)=\int_{\R^2_+}|\nabla \tilde h|^2 dxdy=\int_{D^2}|\nabla H|^2dxdy=2\pi k.$$
Every such map has the form
\begin{equation}\label{Blas}
\tilde h(z)= e^{i\theta_0}\prod_{j=1}^k\frac{z-\beta_j}{z-\bar \beta_j},\quad \beta_1,\dots\beta_k\in \R^2_+,\, \theta_0\in \R.
\end{equation}
We define for $k>0$ the set of Blaschke products
$$\M{B}_k=\left\{\tilde h\text{ as in }\eqref{Blas}\right\} $$
and the set of their conjugates
$$\M{B}_{-k}=\left\{\bar{\tilde h}: \tilde h \in \M{B}_k\right\}.$$
For $k=0$, $\M{B}_0$ are just constant maps into $\sph^1$.

\medskip

Notice that for $\tilde h\in \M{B}_{\pm k}$ and $h(x)=\tilde h(x,0)$, we have $\deg(h)=\pm k$, $E(h)=2\pi|k|$.

\medskip

The following lemma is well known, see e.g. \cite[Lemma 3.1]{BMRS} or \cite[Thm. 12.10]{BM}. We state it and sketch its proof for completeness.

\begin{lem}\label{l:Blas} Consider for $k\in\mathbb{Z}$
$$\M{E}_k:=\{v\in \dot H^{\frac12,2}(\R,\sph^1):\deg(v)=k\}.$$
Then
$$\inf_{\M{E}_k}E=2\pi|k|$$
and the infimum is attained only by the restrictions of  maps $\tilde h\in \M{B}_k$.  
\end{lem}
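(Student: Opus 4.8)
The plan is to prove Lemma \ref{l:Blas} by combining the energy lower bound \eqref{stimadegR} with a characterization of the equality case, which should force the harmonic extension of a minimizer to be (anti-)holomorphic, hence a Blaschke product.

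First I would establish the lower bound and exhibit minimizers. For any $v \in \M{E}_k$, the estimate $|\deg(v)| \le \frac{1}{2\pi} E(v)$ from \eqref{stimadegR} gives $E(v) \ge 2\pi|k|$, so $\inf_{\M{E}_k} E \ge 2\pi|k|$. On the other hand, for $\tilde h \in \M{B}_k$ (or $\M{B}_{-k}$ if $k<0$) the map $h(x) = \tilde h(x,0)$ lies in $\dot H^{\frac12,2}(\R,\sph^1)$, has $\deg(h) = k$, and by conformal invariance $E(h) = \int_{D^2}|\nabla H|^2\,dxdy = 2\pi|k|$, as recalled just above the statement. Hence the infimum equals $2\pi|k|$ and is attained on $\M{B}_k \cup \M{B}_{-k}$ (according to the sign of $k$; for $k=0$ the constants).

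Next I would show these are the only minimizers. Suppose $v \in \M{E}_k$ with $E(v) = 2\pi|k|$ and assume $k \ge 0$ (the case $k<0$ follows by conjugation). Work with the harmonic extension $U := \tilde v \in H^{1,2}(\R^2_+,\mathbb{C})$, for which $E(v) = \int_{\R^2_+}|\nabla U|^2$ and, by \eqref{degformula4}, $k = \deg(v) = \frac{1}{\pi}\int_{\R^2_+} JU\,dxdy$. Using the pointwise inequality $|JU| \le \frac12|\nabla U|^2$, with equality if and only if $U$ is holomorphic (or anti-holomorphic; the correct sign of $JU$ selects holomorphic when $k \ge 0$), the chain
$$2\pi k = 2\int_{\R^2_+} JU\,dxdy \le \int_{\R^2_+}|\nabla U|^2\,dxdy = E(v) = 2\pi k$$
is forced to be an equality, so $JU = \frac12|\nabla U|^2$ a.e. and $U$ is a holomorphic map $\R^2_+ \to \cl{D^2}$ whose boundary values $v$ lie in $\sph^1$ a.e. Composing with the biholomorphism $\Psi:\cl{D^2}\setminus\{i\} \to \cl{\R^2_+}$ of \eqref{Psi}, the map $H := U \circ \Phi$ is a holomorphic self-map of $D^2$ sending $\sph^1$ into $\sph^1$; a standard argument (it is a finite Blaschke product, with the number of factors equal to $\deg = k$) then shows $U$ has the form \eqref{Blas}, i.e. $\tilde v = \tilde h$ for some $\tilde h \in \M{B}_k$.

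The main obstacle is the rigidity step — upgrading ``$E(v)$ attains the bound'' to ``$\tilde v$ is a Blaschke product.'' Two points need care: (i) justifying that equality in $|JU| \le \frac12|\nabla U|^2$ for an $H^{1,2}$ map forces holomorphicity (this is the identity $|\nabla U|^2 - 2JU = |U_x + iU_y|^2$ written out in real coordinates, so equality means $\bar\partial U = 0$ weakly, hence $U$ holomorphic by Weyl's lemma); and (ii) identifying a holomorphic $U:\R^2_+\to\overline{D^2}$ of finite Dirichlet energy with unimodular boundary values as exactly a finite Blaschke product composed with $\Psi$ — here one can either quote the cited references \cite[Thm. 12.10]{BM}, \cite[Lemma 3.1]{BMRS}, or argue that $U$ extends across the boundary by Schwarz reflection and is therefore a rational map of the stated degree. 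Everything else is a bookkeeping of the already-established formulas.
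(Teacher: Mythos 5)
Your proposal is correct and follows essentially the same route as the paper: the lower bound and the rigidity both come from the degree formula $\deg(v)=\frac{1}{\pi}\int J\tilde v$ combined with the pointwise inequality $|J\tilde v|\le\frac12|\nabla\tilde v|^2$, whose equality case forces the harmonic extension to be conformal or anticonformal, hence a Blaschke product (the paper works on $D^2$ after conformal transformation, you on $\R^2_+$ — an immaterial difference). Your added detail on the rigidity step (Weyl's lemma, and the identification of finite-energy holomorphic self-maps with unimodular boundary values as finite Blaschke products via the cited references) only fills in what the paper's sketch leaves implicit.
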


\begin{proof} (Sketch) By conformal invariance of $E$, we can prove the lemma for functions in $H^{\frac12,2}(\sph^1,\sph^1)$. By the degree formula \eqref{degformula2} we have for $v\in \M{E}_k$
$$2\pi |k|=2\left|\int_D J\tilde v dxdy\right|\le \int_D|\nabla \tilde v|^2dxdy=: E(v), $$
where $\tilde v$ is the harmonic extension of $v$ to the whole disk, and the inequality is strict unless $\tilde v$ is everywhere conformal or anticonformal, which corresponds to $v\in \M{B}_k$ or $v\in \M{B}_{-k}$, respectively.
\end{proof}

\noindent\emph{Proof of Theorem \ref{thmex} - Part (i).} Since $g_\lambda$ is the restriction to $\R\times\{0\}$ of $\Phi_\lambda:=\Phi\(\tfrac{\cdot}{\lambda}\)\in\M{B}_1$, the minimality of $g_\lambda$ and its uniqueness follow at once from Lemma \ref{l:Blas}.

\medskip
\noindent\emph{Part (ii).}
We have $E(u_0)=E(g_\lambda)=2\pi$.  Except when $k=1$, none of the Blaschke products in $\M{B}_k$ matches $g_\lambda$ on $\R\setminus I$, hence, by Lemma \ref{l:Blas}, for every $u\in \M{E}_{g_\lambda,k}$, $k\in \mathbb{Z}\setminus \{0,1\}$
$$E(u)>2\pi|k|\ge 2\pi = E(u_0).$$
Still by Lemma \ref{l:Blas}, $E(u)>2\pi =E(u_0)$ for every $u\in \M{E}_{g_\lambda,1}\setminus\{u_0\}$.
It remains to exclude for $\lambda$ sufficiently large the existence of a function $u\in \M{E}_{g_\lambda,0}$ such that $E(u)\le E(g_\lambda)$.

Consider $\lambda_n\to\infty$. We have by dominated convergence
\[
\begin{split}
\lim_{n\to\infty}\frac{1}{2\pi}\int_{\R\setminus I}\int_{\R\setminus I}\frac{|g_{\lambda_n}(x)-g_{\lambda_n}(\xi)|^2}{(x-\xi)^2}dxd\xi&=\lim_{n\to\infty}\frac{1}{2\pi}\int_{\R\setminus \left[-\tfrac{1}{\lambda_n},\tfrac{1}{\lambda_n}\right]}\int_{\R\setminus \left[-\tfrac{1}{\lambda_n},\tfrac{1}{\lambda_n}\right]}\frac{|g_1(x)-g_1(\xi)|^2}{(x-\xi)^2}dxd\xi\\
&=\frac{1}{2\pi}[g_1]_{H^{\frac{1}{2},2}(\R)}^2=E(g_1)=2\pi,
\end{split}
\]
while, for any sequence of functions $(u_n)\subset \M{E}_{g_{\lambda_n},0}$ we have
\begin{equation}\label{unlarge}
\liminf_{n\to\infty}\frac{1}{2\pi}\left(\int_{\R}\int_{I}\frac{|u_n(x)-u_n(\xi)|^2}{(x-\xi)^2}dxd\xi+\int_{I}\int_{\R\setminus I}\frac{|u_n(x)-u_n(\xi)|^2}{(x-\xi)^2}dxd\xi \right)  \ge 2\pi.
\end{equation}
In order to prove \eqref{unlarge}, consider the function
\begin{equation}\label{defvn}
v_n(x)=\begin{cases}
u_n(x)& \text{for }|x|\le 1\\[3mm]
g_{\lambda_n}\(x\)&\text{for }1<|x|\le R\\[3mm]
g_{\lambda_n}\(\tfrac{R^2}{ x}\) &\text{for }|x|>R.
\end{cases}
\end{equation}
For $R\geq2$ we rewrite the functions $u_n$ and $v_n$ as $$u_n=w_1\& w_2,\quad v_n=w_1\#w_2,\quad\text{where }\quad w_1:=u_n|_{[-R,R]},\quad  w_2:=g_{\lambda_n}, $$ and hence, by Proposition \ref{p:degjump2} $$\deg(v_n)=\deg(u_n)-\deg(w_2)=-1.$$
By Lemma \ref{l:Blas} we have
$$E(v_n)\ge 2\pi.$$
On the other hand, from \eqref{defvn} we estimate 
\begin{equation}\label{vno1}\int_{\R\setminus I}\int_{\R\setminus I}\frac{|v_n(x)-v_n(\xi)|^2}{(x-\xi)^2}dxd\xi=o(1),\quad \text{as }n\to\infty.
\end{equation} Indeed, for $R\geq 2$ writing \begin{align*}  & \int_{\R\setminus I}\int_{\R\setminus I}\frac{|v_n(x)-v_n(\xi)|^2}{(x-\xi)^2}dxd\xi\\&=\left(\int_{\R\setminus [-R,R]}\int_{\R\setminus [-R,R]}+2\int_{\R\setminus [-R,R]}\int_{[-R,R]\setminus I}+\int_{[-R,R]\setminus I}\int_{[-R,R]\setminus I}  \right)\frac{|v_n(x)-v_n(\xi)|^2}{(x-\xi)^2}dxd\xi\\&=:(I)+2(II)+(III),\end{align*} one can bound with a change of variables in $x$ and in $\xi$
\begin{align*}(I)&=\int_{[-R,R]}\int_{[-R,R]} \frac{|g_{\lambda_n}(x)-g_{\lambda_n}(\xi)|^2}{(x-\xi)^2}dxd\xi\\
&\leq \|g_1'\|_{L^\infty}\frac{1}{\lambda_n^2} \int_{[-R,R]}\int_{[-R,R]}dxd\xi\leq\frac{C(R)}{\lambda_n^2}, \end{align*}
with a change of variables in $\xi$  \begin{align*} (II)&= \int_{[-R,R]}\int_{[-R,R]\setminus I}\frac{|g_{\lambda_n}(x)-g_{\lambda_n}(\xi)|^2}{(x-\frac{R^2}{\xi})^2}\frac{R^2}{\xi^2}dxd\xi \\&\leq R^2 \frac{\|g_1'\|}{\lambda_n^2}\int_{[-R,R]}\int_{[-R,R]\setminus I}\frac{( x-\xi)^2}{(x\xi- R^2)^2} dxd\xi \leq \frac{C(R)}{\lambda_n^2}, \end{align*}
and finally
\begin{align*} (III)=\int_{[-R,R]\setminus I}\int_{[-R,R]\setminus I} \frac{|g_{\lambda_n}(x)-g_{\lambda_n}(\xi)|^2}{(x-\xi)^2}dxd\xi\leq (I)\leq  \frac{C(R)}{\lambda_n^2}, \end{align*}
so that $\eqref{vno1}$ is proven.

Moreover, since $R\ge 2$, we have $|x-y|\ge \frac{|y|}{2}$ for $x\in I$, $|y|\ge R$, so that
\[\begin{split}
\int_{\R\setminus[-R,R]}\int_{I}\frac{|v_n(x)-v_n(\xi)|^2}{(x-\xi)^2}dxd\xi&\le
\int_{\R\setminus[-R,R]}\int_{I}\frac{16}{\xi^2}dxd\xi= \frac{64}{R}.
\end{split}\]
It then follows
\begin{align*}2\pi \le E(v_n)\le & \frac{1}{2\pi}\int_{[-R,R]}\int_{I}\frac{|v_n(x)-v_n(\xi)|^2}{(x-\xi)^2}dxd\xi+\frac{1}{2\pi}\int_{I}\int_{[-R,R]\setminus I}\frac{|v_n(x)-v_n(\xi)|^2}{(x-\xi)^2}dxdy\\&\quad +\frac{C}{R}+o(1),\quad \text{as }n\to\infty\end{align*}
and, since $u_n=v_n$ in $[-R,R]$,
\[
\begin{split}
\frac{1}{2\pi}\int_{[-R,R]}\int_I\frac{|u_n(x)-u_n(\xi)|^2}{(x-\xi)^2}dxdy+\frac{1}{2\pi}\int_{I}\int_{[-R,R]\setminus I}\frac{|u_n(x)-u_n(\xi)|^2}{(x-\xi)^2}dxd\xi\ge 2\pi-\frac{C}{R}+o(1)
\end{split}
\]
as $n\to\infty$. Since $R>1$ can be taken arbitrarily large, \eqref{unlarge} is proven.


Since 
\begin{align*}E(u)&=\frac{1}{2\pi}\int_{\R\setminus I}\int_{\R\setminus I}\frac{|u(x)-u(\xi)|^2}{(x-\xi)^2}dxd\xi+\frac{1}{2\pi}\int_{\R}\int_{I}\frac{|u(x)-u(\xi)|^2}{(x-\xi)^2}dxd\xi\\&\qquad +\frac{1}{2\pi}\int_{I}\int_{\R\setminus I}\frac{|u(x)-u(\xi)|^2}{(x-\xi)^2}dxd\xi,\end{align*}
we conclude that
$$\liminf_{n\to\infty}E(u_n) \ge 4\pi,$$
hence for $\lambda$ sufficiently large we have $E(u)>2\pi$ for every $u\in \M{E}_{g_\lambda,0}$.

\medskip

\noindent\emph{Part (iii).} Let $u_0$ be an absolute minimizer of $E$ in $\M{E}_{g_\lambda}$, and set $k=\deg(u_0)$. If $k=0$ then we are done. For $|k|\geq1 $,  it follows from  Lemma \ref{l:Blas} that  $$2\pi\leq2\pi|k|=\inf_{\M{E}_k} E\leq E(u_0)\leq E(g_\lambda)=2\pi. $$ Thus, we can simply take $u_0=g_\lambda$,  and  the existence of a minimizer $u^*$ in $\M{E}_{g_\lambda,0}$ follows at once from Theorem \ref{mainthm2}. 

On the other hand, by Proposition \ref{propex} below, there exists $\lambda_0>1$ such that for $\lambda\in (0,\lambda_0)$ there exists $u_0\in \M{E}_{g_\lambda,0}$ absolute mimimizer of $E$ in $\M{E}_{g_\lambda}$ with $E(u_0)<2\pi=E(g_\lambda)$.

\medskip

\noindent\emph{Part (iv).} We claim that for any $k>1$ and $\lambda>0$
\begin{equation}\label{est2pik}
\inf_{\M{E}_{g_\lambda,k}}E\le E(g_\lambda)+ 2\pi|k-\deg(g_\lambda)|= 2\pi k.
\end{equation}
In order to prove \eqref{est2pik}, take $\delta>0$ sufficiently small, $k-1$ distinct points $x_1,\dots, x_{k-1}\in I$ and $v\in \dot H^{\frac{1}{2},2}(\R,\sph^1)$ such that 
\begin{enumerate}
\item $|x_j|<1-\delta$ for $1\le j\le k-1$ and $|x_j-x_\ell|>2\delta$ for $1\le j<\ell\le k-1$,
\item $v=g_\lambda$ in $\R\setminus \cup_{j=1}^{k-1} (x_j-\delta,x_j+\delta)$,
\item $v(x_j)\wedge v'(x_j)<0$,
\item $E(v)\le E(g_\lambda)+\delta.$
\end{enumerate}
To prove that such $v$ exists, consider the map $\Pi_+\circ g_\lambda(x)=\frac{x}{\lambda}$ for $x\in \R$. For a function $\eta\in C^\infty_c((-1,1))$ with $\eta'(0)=-\frac{2}{\lambda}$ and $\rho>0$ set $\eta_\rho(x):=\rho\eta(x/\rho)$ and define
$$v_\rho:= \Pi_+^{-1}\(\Pi_+\circ g_\lambda +\sum_{j=1}^{k-1} \eta_\rho(x-x_j)\).$$
Since for $\rho$ small it holds 
$$(\Pi_+\circ v_\rho)'(x_j)=\frac{1}{\lambda}-\eta'(0)=-\frac{1}{\lambda}<0,$$
we have $v_\rho(x_j)\wedge v'_\rho(x_j)<0$ for $j=1,\dots,k-1$ since for $x$ in a neighborhood of $x_j$ we have that $v_\rho(x)$ moves in the clockwise direction on $\sph^1$ as $x$ increases. Moreover, by scale invariance,
$$[\eta_\rho]_{H^{\frac{1}{2},2}(\R)}=\rho [\eta]_{H^{\frac{1}{2},2}(\R)}=O(\rho), \quad  \text{as }\rho\to 0,$$ hence $E(v_\rho)\to E(g_\lambda)$ as $\rho\to 0$, and it suffices to choose $\rho>0$ small enough.

Then, applying Lemma \ref{strictestimate} to $v$ at $x_{1},\dots,x_{k-1}$ we obtain a new function $w$ with $\deg(w)=\deg(v)+k-1=k$ and
$$E(w)<E(v)+2\pi(k-1)\le E(g_\lambda)+\delta+2\pi(k-1)=2\pi k+\delta.$$
Since $\delta>0$ is arbitrary, \eqref{est2pik} follows.

On the other hand, by Lemma \ref{l:Blas}
$$\inf_{\M{E}_{g_\lambda,k}}E\ge\inf_{\M{E}_{k}}E =2\pi k$$
and can only be attained by Blaschke products in $\M{B}_k$. Since $\M{B}_k\cap \M{E}_{g_\lambda}=\emptyset$ for $k>1$, the infimum of $E$ in $\M{E}_{g_\lambda,k}$ is not attained.
\hfill$\square$

\begin{prop}\label{propex} Consider $g_\lambda(x)=\Pi_+^{-1}(x/\lambda)$. There exists $\lambda_0>1$ such that for $\lambda\in (0,\lambda_0)$
$$2\pi=E(g_\lambda)>\inf_{\M{E}_{g_\lambda}}E = \inf_{\M{E}_{g_\lambda,0}}E.$$
\end{prop}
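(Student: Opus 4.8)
The plan is to reduce the statement to the construction of a single explicit competitor. Since $g_\lambda$ is the restriction of $\Phi_\lambda:=\Phi(\cdot/\lambda)\in\M{B}_1$ we have $E(g_\lambda)=2\pi$, and by Lemma \ref{l:Blas}, $\inf_{\M{E}_{g_\lambda,k}}E\ge\inf_{\M{E}_k}E=2\pi|k|\ge2\pi$ for every $k\in\mathbb{Z}\setminus\{0\}$, with equality attained (by $g_\lambda$) when $k=1$. Hence everything follows once we find, for all $\lambda$ in a neighbourhood of $1$ in $(0,\infty)$, some $u_\lambda\in\M{E}_{g_\lambda,0}$ with $E(u_\lambda)<2\pi$: this gives both the strict inequality and the identity $\inf_{\M{E}_{g_\lambda}}E=\inf_{\M{E}_{g_\lambda,0}}E$. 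I would take the \emph{folded} competitor $u_\lambda$ equal to $g_\lambda$ on $\R\setminus I$ and to $g_\lambda(1/\cdot)$ on $I$, so that $u_\lambda=g_\lambda$ on $\R\setminus I$. Writing $u_\lambda=v\&u$ in the notation of Proposition \ref{p:degjump2} with $\ve=1$, $u=g_\lambda$ and $v=g_\lambda(1/\cdot)$, one computes $v\#u=g_\lambda(1/\cdot)$ on all of $\R$; since $x\mapsto 1/x$ is the trace of a conformal automorphism of $\R^2_+$ post-composed with an orientation-reversing reflection, $\deg\big(g_\lambda(1/\cdot)\big)=-\deg(g_\lambda)=-1$, and Proposition \ref{p:degjump2} yields $\deg(u_\lambda)=\deg(g_\lambda)+(-1)=0$.

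To estimate $E(u_\lambda)$ I would use the explicit (non-optimal) extension $U_\lambda:\R^2_+\to\cl{D^2}$ given by $U_\lambda(z)=\Phi(z/\lambda)$ for $|z|\ge1$ and $U_\lambda(z)=\Phi\big(1/(\lambda\bar z)\big)$ for $0<|z|\le1$; the two formulas agree on $\{|z|=1\}$ (where $1/\bar z=z$), so $U_\lambda$ is continuous on $\R^2_+$ with trace $u_\lambda$ on $\R\times\{0\}$, and in particular $u_\lambda\in\dot H^{\frac{1}{2},2}(\R,\sph^1)$. On $\{|z|\ge1\}$ it is conformal and on $\{|z|\le1\}$ anticonformal, so $|\nabla U_\lambda|^2=2|JU_\lambda|$ on each piece; since $z\mapsto 1/\bar z$ maps $\{|z|\le1\}\cap\R^2_+$ onto $\{|z|\ge1\}\cap\R^2_+$, both pieces are mapped onto $\Phi\big(\{|w|\ge1/\lambda\}\cap\R^2_+\big)$, whence $\int_{\R^2_+}|\nabla U_\lambda|^2=4\big(\pi-A(1/\lambda)\big)$, where $A(r):=\mathrm{Area}\big(\Phi(\{|w|<r\}\cap\R^2_+)\big)$ and $\mathrm{Area}(\Phi(\R^2_+))=\mathrm{Area}(D^2)=\pi$. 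The algebraic identity $\Phi(1/\bar z)=\cl{\Phi(z)}$ shows that $z\mapsto 1/\bar z$ is $\Phi$-conjugate to the area-preserving reflection $w\mapsto\bar w$ of $D^2$, which swaps $\Phi(\{|z|<1\}\cap\R^2_+)$ with $\Phi(\{|z|>1\}\cap\R^2_+)$; therefore $A(1)=\pi/2$. As $r\mapsto A(r)$ is strictly increasing (its integrand $|J\Phi|$ is everywhere positive), for $\lambda\in(0,1)$ one already gets $E(u_\lambda)\le 4\big(\pi-A(1/\lambda)\big)<4\big(\pi-A(1)\big)=2\pi$.

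For $\lambda\ge1$ the bound above degenerates to $E(u_1)\le2\pi$, and the crucial point is that $U_1$ is \emph{not harmonic}: it is conformal on $\{|z|>1\}\cap\R^2_+$ and anticonformal on $\{|z|<1\}\cap\R^2_+$, and across the arc $\gamma=\{|z|=1,\ \mathrm{Im}\,z>0\}$ its outward-radial derivative jumps, because a direct computation gives $e^{i\theta}\Phi'(e^{i\theta})=i/(1+\sin\theta)\notin\R$, so the radial derivatives of the outer and inner pieces at $e^{i\theta}$ are $i/(1+\sin\theta)$ and $-i/(1+\sin\theta)$, nonzero and opposite. Since the Poisson extension $\tilde u_1$ is the \emph{unique} minimizer of the Dirichlet integral among $H^{1,2}(\R^2_+)$-extensions of its trace, $E(u_1)=\int_{\R^2_+}|\nabla\tilde u_1|^2<\int_{\R^2_+}|\nabla U_1|^2=2\pi$. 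Finally, $\lambda\mapsto u_\lambda$ is continuous into $\dot H^{\frac{1}{2},2}(\R,\mathbb{C})$ at $\lambda=1$: a change of variables $x\mapsto 1/x$ on $I$, together with $|xt-1|\ge\frac{1}{2}|x-t|$ for $|x|,|t|\ge1$, gives $[u_\lambda-u_1]_{H^{\frac{1}{2},2}(\R)}\le C\,[g_\lambda-g_1]_{H^{\frac{1}{2},2}(\R)}\to0$, hence $E(u_\lambda)\to E(u_1)<2\pi$ and there is $\lambda_0>1$ with $E(u_\lambda)<2\pi$ for all $\lambda\in[1,\lambda_0)$. Combined with the case $\lambda\in(0,1)$, this gives $2\pi=E(g_\lambda)>E(u_\lambda)\ge\inf_{\M{E}_{g_\lambda,0}}E=\inf_{\M{E}_{g_\lambda}}E$ for every $\lambda\in(0,\lambda_0)$.

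The hard part is precisely this last step at $\lambda=1$: the explicit extension $U_\lambda$ has Dirichlet energy exactly $2\pi$ when $\lambda=1$, so no ``competitor extension'' argument by itself can yield the strict inequality there. One genuinely has to exploit the non-harmonicity of $U_1$ — that is, the fold of $U_1$ along $\gamma$, which reflects the nonlocal, ``double covering'' nature of the minimizer discussed after Theorem \ref{thmex} — in order to gain strictness, and then transport that gain to $\lambda$ slightly above $1$ by the continuity estimate. The remaining pieces (well-posedness of the surgery $u_\lambda=v\&u$, the degree count, and the change-of-variables bounds) are routine, the second being modelled on the computations in the proof of Theorem \ref{thmex}(ii).
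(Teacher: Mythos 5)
Your proof is correct and follows essentially the same route as the paper: your folded competitor $u_\lambda$ (equal to $g_\lambda$ outside $I$ and to $g_\lambda(1/\cdot)$ on $I$) is exactly the trace of the paper's reflected extension $V_\lambda$ transported from $D^2$ to $\R^2_+$, and you use the same three ingredients — the area/conformality count for $\lambda<1$, the non-harmonicity of the folded extension to get strictness at $\lambda=1$, and continuity in $\lambda$ to reach some $\lambda_0>1$. The only differences are presentational (working on the half-plane rather than the disk, and computing the degree via Proposition \ref{p:degjump2} where the paper reads it off directly), and your continuity step is in fact spelled out in more detail than the paper's.
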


\begin{proof} As in the proof of Theorem \ref{thmex}, Part (i), we have
$$E(g_\lambda)=2\pi =\min_{\M{E}_{g_\lambda,1}}E.$$
It suffices to prove that for $\lambda_0>1$ sufficiently close to $1$, and for $\lambda\in (0,\lambda_0]$, we have
\begin{equation}\label{eqgdeg0}
\inf_{\M{E}_{g_\lambda,0}} E<2\pi,
\end{equation}
i.e. there exists $u\in \M{E}_{g_\lambda}$ with $\deg(u)=0$ and $E(u)<2\pi$. Then, since a minimizer $u_0$ of $E$ in $\M{E}_{g_\lambda}$ exists by direct methods, we must have $u_0\in \M{E}_{g_\lambda,0}$ by \eqref{eqgdeg0} and Lemma \ref{l:Blas}.

We start with the case $\lambda=1$. It is convenient to work on $\sph^1$ via stereographic projection. Clearly $g_1\circ \Pi_+(e^{i\theta})=e^{i\theta}$. Now we define
\[
v(e^{i\theta})=
\begin{cases}
e^{i\theta} &\text{for }\theta\in [0,\pi]\\
e^{-i\theta} & \text{for }\theta\in [\pi,2\pi].
\end{cases}
\]
Clearly $\deg(v)=0$. We extend $v$ to the function on $\cl{D^2}$
\begin{equation}\label{defV}
V(z)=
\begin{cases}
z &\text{for }\mathrm{Im}z\ge 0\\
\bar z & \text{for }\mathrm{Im}z< 0
\end{cases}
\end{equation}
Let $\tilde v$ be the harmonic extension of $v$ to $D^2$. Then, since $V$ is not harmonic, we have
$$E(v)=\int_{D^2}|\nabla \tilde v|^2dxdy<\int_{D^2}|\nabla V|^2dxdy=2\pi.$$
Then, by conformal invariance, $E(v\circ\Pi_+^{-1})<2\pi$, and since $v\circ\Pi_+^{-1}\in \M{E}_{g_1,0}$, \eqref{eqgdeg0} is proven for $\lambda=1$.

By continuity, \eqref{eqgdeg0} continues to hold for $\lambda$ in a neighborhood of $1$, in particular in $[1,\lambda_0]$ for some $\lambda_0>1$.

For $\lambda \in (0,1)$ we still have \eqref{eqgdeg0}. Indeed, similar to \eqref{defV} we can construct
\begin{equation*} 
V_\lambda(z)=
\begin{cases}
\Phi_\lambda\circ \Psi (z) &\text{for }\mathrm{Im}z\ge 0\\
\Phi_\lambda\circ \Psi (\bar z) & \text{for }\mathrm{Im}z< 0,
\end{cases}
\end{equation*}
where $\Phi_\lambda:= \Phi\big(\frac{\cdot}{\lambda}\big)$, and $\Phi$ and $\Psi$ are as in \eqref{Phi} and \eqref{Psi}.

By symmetry and conformality we have, for $D^2_+=\{z\in D^2:\mathrm{Im}z> 0\}$,
$$\int_{D^2} |\nabla V_\lambda|^2 dxdy=2\int_{D^2_+} |\nabla V_\lambda|^2 dxdy=4 \mathrm{Area}(V_\lambda(D^2_+)) <2\pi,$$
since $V_\lambda$ sends the half disk $D^2_+$ into a strict subset of itself\footnote{To verify this, observe that $\Phi\circ\Psi=Id$, which of course sends $D^2_+$ onto itself, while $\Phi_\lambda\circ \Psi=\Phi\circ M_\lambda\circ\Psi$, where $M_\lambda (z)=\tfrac{z}{\lambda}$. On the other hand, $\Psi(D^2_+)=\R^2_+\setminus D^2_+$, and $M_\lambda$ for $\lambda\in (0,1)$, being a dilation, sends $\R^2_+\setminus D^2_+$ into a strict subset of itself, which is sent by $\Phi$ into a strict subset of $D^2_+$.} 
Setting $v_\lambda= V_\lambda|_{\sph^1}$, as before we have
$$E(v_\lambda\circ \Pi_+^{-1}) \le  \int_{D^2} |\nabla V_\lambda|^2 dxdy<2\pi,$$
and since $\deg(v_\lambda)=0$ and $v_\lambda\circ\Pi_+^{-1}=g_\lambda$ on $\R\setminus [-1,1]$, we obtain \eqref{eqgdeg0}.
\end{proof}

\section{Proof of Theorem \ref{thmconst}}

We will divide the proof in several steps.

\medskip

\noindent\textbf{Step 1: Definition and extension of the Hopf differential in $\mathbb{C}\setminus\{\pm 1\}$.}  We will use the Hopf differential, in a way similar to  \cite[Lemma 4.27]{milsir} and \cite[Sec. 4.2]{DLR3}. 

Let $\tilde u$ be the Poisson harmonic extension of $u$ to the upper half plane. Consider the Hopf differential
$$H(x,y)=\left(|\partial_x \tilde u|^2-|\partial _y \tilde u|^2\right)-2i\left(\partial _x \tilde u\cdot \partial_y \tilde u\right).$$
Notice that $E(u)=\|\nabla \tilde u\|_{L^2(\R^2_+)}^2<\infty$ implies $H\in L^1(\R^2_+,\mathbb{C})$.

Set $g(x,y)=\partial_x \tilde u\cdot \partial_y \tilde u$. Since $\tilde u (x,0)\equiv P$ for $x\in \R\setminus I$, we see that $g(x,0)=0$ for $|x|>1$. For $|x|<1$, first recall that $u$ is smooth in $I$ by \cite{DLR2}, so is $\tilde u$ in $\overline{R^2_+}\setminus \{(\pm 1,0)\}$. Then for $|x|<1$, $\partial_x \tilde u(x,0)=\partial_x u(x)\in T_{u(x)}\mathcal{N}$ and $\partial_y \tilde u (x,0)\perp T_{u(x)}\mathcal{N}$ by \eqref{eq12harmN}, hence $g(x,0)=0$ also for $|x|<1$. 


As $\tilde u$ is harmonic, it follows by a direct computation that $H$ is holomorphic on $\overline{\R^2_+}\setminus \{(\pm 1,0)\}$ 
Since $g=0$ on $\R\setminus \{(\pm 1,0)\}$, by the Schwarz  reflection principle we can extend $H$ to $\R^2\setminus\{(\pm 1,0)\}=\mathbb{C}\setminus \{\pm 1\}$ as $H(z)= \overline{H(\overline{z})}$ for $\Im z<0$. Of course we still have $H\in L^1(\mathbb{C}\setminus \{\pm 1\})$.

\medskip

\noindent\textbf{Step 2: Singularity removability and vanishing of $H$.} We start by claiming that $H$ has either removable singularities or simple poles at $\pm 1$. This follows at once from the following lemma.

\begin{lem}\label{lemsing} Let $h: D^2\setminus \{0\}$ be holomorphic and integrable. Then either
\begin{enumerate}
\item $h$ can be extended to a holomorphic function in $D^2$, or
\item $h$ has a simple pole at $0$, i.e. there is a holomorphic function $h_1: D^2\to \mathbb{C}$ with $h_1(0)\ne 0$ such that $h(z)=\frac{h_1(z)}{z}$ for $z\in D^2\setminus\{0\}$.
\end{enumerate}
\end{lem}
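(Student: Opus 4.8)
The plan is to prove Lemma \ref{lemsing} using the integrability hypothesis to rule out essential singularities and poles of order $\ge 2$, then to classify the two remaining possibilities. The natural tool is the Laurent expansion of $h$ around $0$, together with an area/$L^1$ estimate near the origin.

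\medskip

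\noindent\textbf{Step 1: Laurent expansion and the structure of the singularity.} Since $h$ is holomorphic in $D^2\setminus\{0\}$, it has a Laurent expansion $h(z)=\sum_{n\in\mathbb{Z}} c_n z^n$ converging on $0<|z|<1$. I claim first that $c_n=0$ for all $n\le -2$, i.e.\ the singularity is at worst a simple pole. Suppose not, and let $m:=\min\{n: c_n\ne 0\}\le -2$. On a circle $|z|=r$ with $r$ small the coefficient formula gives $c_m = \frac{1}{2\pi i}\int_{|z|=r} h(z) z^{-m-1}\,dz$, so $|c_m|\le r^{-m-1}\max_{|z|=r}|h(z)|$. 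This pointwise bound is not directly available, so instead I integrate in polar coordinates: for $0<\rho<1$,
\[
\int_{|z|<\rho}|h(z)|\,dx\,dy \ge \int_0^\rho\!\!\int_0^{2\pi} \Big|\sum_{n\ge m} c_n r^n e^{in\theta}\Big|\, r\,d\theta\,dr.
\]
Using $\int_0^{2\pi}|\sum_n c_n r^n e^{in\theta}|^2 d\theta = 2\pi\sum_n |c_n|^2 r^{2n}$ and the elementary inequality $\int_0^{2\pi}|f|\,d\theta \ge \frac{1}{\sqrt{2\pi}}(\int_0^{2\pi}|f|^2\,d\theta)^{1/2}$, this lower bound dominates (up to a constant) $\int_0^\rho (\sum_n |c_n|^2 r^{2n})^{1/2} r\,dr \ge |c_m|\int_0^\rho r^{m+1}\,dr$, which diverges as $\rho\to 0$ when $m\le -2$ (since $m+1\le -1$). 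This contradicts $h\in L^1(D^2)$. Hence $c_n=0$ for $n\le -2$.

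\medskip

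\noindent\textbf{Step 2: Dichotomy.} From Step 1, $h(z)=\frac{c_{-1}}{z}+\sum_{n\ge 0} c_n z^n$ on $0<|z|<1$. If $c_{-1}=0$, then $h$ extends holomorphically across $0$ by the power series $\sum_{n\ge 0}c_n z^n$, which is case (1). If $c_{-1}\ne 0$, set $h_1(z):= z h(z)= c_{-1}+\sum_{n\ge 0}c_n z^{n+1}$; this power series converges on $D^2$ (same radius of convergence as that of $h$ minus its principal part), defines a holomorphic function $h_1:D^2\to\mathbb{C}$ with $h_1(0)=c_{-1}\ne 0$, and satisfies $h(z)=h_1(z)/z$ on $D^2\setminus\{0\}$, which is case (2). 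Note one should also check $c_{-1}\in L^1$ near $0$ forces nothing extra since $1/z$ is indeed locally integrable in $\R^2$; so case (2) is genuinely compatible with the hypothesis, consistent with the statement.

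\medskip

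\noindent\textbf{Main obstacle.} The only real subtlety is justifying the lower bound on $\int_{|z|<\rho}|h|$ in Step 1: one must be careful that the Laurent series, while convergent pointwise on annuli, also controls the $L^1$ norm near $0$, which is why passing through the $L^2$ norm on each circle (where Parseval applies cleanly) and then to $L^1$ via Cauchy–Schwarz is the cleanest route. An alternative, perhaps shorter, is to invoke the classification of isolated singularities directly: an essential singularity is excluded because near it $|h|$ is unbounded on a dense set in a way incompatible with $L^1$ (e.g.\ via Casorati–Weierstrass combined with the mean-value-type estimate $|c_n| r^n \le \frac{1}{2\pi}\int_0^{2\pi}|h(re^{i\theta})|\,d\theta$ and integrating in $r$), and a pole of order $\ge 2$ is excluded by the same area computation; then (1) and (2) are exactly the removable-singularity and simple-pole cases. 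I will present the Laurent-series argument as it is self-contained.
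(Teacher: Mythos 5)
Your overall strategy (Laurent expansion, integrability kills all coefficients of order $\le -2$, then the dichotomy on $c_{-1}$) is exactly the paper's, and Step 2 is fine. But the key estimate in your Step 1, as written, rests on a false inequality. You pass from the $L^2(d\theta)$ norm on each circle down to the $L^1(d\theta)$ norm via the claimed ``elementary inequality'' $\int_0^{2\pi}|f|\,d\theta \ge \frac{1}{\sqrt{2\pi}}\bigl(\int_0^{2\pi}|f|^2\,d\theta\bigr)^{1/2}$. This is the \emph{reverse} of Cauchy--Schwarz and does not hold in general: for $f=M\chi_A$ with $|A|=\delta$ small, the left side is $M\delta$ while the right side is $M\sqrt{\delta/2\pi}$, so the inequality fails for $\delta<1/(2\pi)$. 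There is no reverse H\"older inequality of this kind, even for boundary values of holomorphic functions, so the chain of estimates in Step 1 breaks at this point.

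The repair is immediate and is in fact the route you relegate to your closing ``alternative'' remark, which is precisely what the paper does: apply the coefficient formula directly, $|c_n|\,r^{n} = \bigl|\frac{1}{2\pi}\int_0^{2\pi} h(re^{i\theta})e^{-in\theta}\,d\theta\bigr| \le \frac{1}{2\pi}\int_0^{2\pi}|h(re^{i\theta})|\,d\theta$, multiply by $r$ and integrate in $r$ over $(\ve,1)$; integrability of $h$ bounds the right side uniformly in $\ve$, while $\int_0^1 r^{n+1}\,dr=\infty$ for $n\le -2$, forcing $c_n=0$. This bypasses Parseval and the $L^2$ norm entirely. A second, cosmetic, point: you set $m:=\min\{n:c_n\ne 0\}$, but if the singularity were essential this minimum need not exist; the argument should be phrased for an arbitrary index $n_0\le -2$ with $c_{n_0}\ne 0$ (which is all the single-coefficient estimate needs), after which the conclusion $c_n=0$ for all $n\le-2$ follows coefficient by coefficient.
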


\begin{proof} We consider the Laurent series
$$h(z)=\sum_{k=-\infty}^\infty a_kz^k,\quad a_k:=\frac{1}{2\pi i}\int_{\partial B_r(0)}\frac{h(z)}{z^{k+1}}{dz}= \frac{1}{{2\pi r^k}}\int_0^{2\pi} h(r e^{i\theta})e^{-ik\theta}d\theta.$$
We estimate
$$2\pi |a_k| r^{k+1}\le r \int_0^{2\pi}| h(r e^{i\theta})|d\theta,$$
and integrate
$$\lim_{\ve\to 0}\int_{\ve}^1 2\pi |a_k| r^{k+1} dr \le \int_{D^2\setminus \{0\}}|h(z)||dz|^2<+\infty.$$
This implies at once $a_k=0$ for $k\le -2$. In the case $a_{-1}=0$, $h$ has a removable singularity, otherwise $h(z)=\frac{h_1(z)}{z}$ with
$$h_1(z)=\sum_{k=0}^\infty a_{k-1}z^k.$$
\end{proof}

Applying Lemma \ref{lemsing} to $H$ at $\pm 1$, we can write
$$H(z)=\frac{H_1(z)}{(z+1)(z-1)}$$
where $H_1:\mathbb{C}\to \mathbb{C}$ is holomorphic (it could be that $H_1(-1)=0$ and/or $H_1(1)=0$).
Using Cauchy's formula, for every $z_0\in \mathbb{C}$
\begin{equation}\label{H1'}
H_1'(z_0)=\frac{1}{2\pi i}\int_{\partial B_R(0)}\frac{H_1(z)}{(z-z_0)^2}dz=O\left(\int_{\partial B_R}\frac{|H_1(z)|}{|z|^2}|dz|^2\right),\quad \text{as }R\to\infty.
\end{equation}
Since $H\in L^1(\mathbb{C},\mathbb{C})$, we have
\begin{equation}\label{intH1}
\int_{\mathbb{C}\setminus D^2}\frac{|H_1(z)|}{|z|^2}|dz|^2<\infty,
\end{equation}
hence for a sequence of radii $R_k\to \infty$
$$\int_{\partial B_{R_k}(0)}\frac{|H_1(z)|}{|z|^2}|dz|\to 0,$$
which, together with \eqref{H1'} implies $H_1'(z_0)=0$. Then $H_1$ is constant, and  \eqref{intH1} implies $H_1\equiv 0$.
Then also $H\equiv 0$ in all of $\mathbb{C}$. In particular $\tilde u$ is conformal.

\medskip

\noindent\textbf{Step 3: Conformality implies constancy.} From conformality and $u\equiv P$ on $\R\setminus I$ we infer
\begin{equation}\label{tildeuconf}
0=\frac{\de \tilde u}{\de x}(x,0)=\frac{\de \tilde u}{\de y}(x,0),\quad \text{for }x\in \R\setminus [-1,1].
\end{equation} As $\tilde u$ is smooth on $\overline{\R^2_+}\setminus \{(\pm1,0)\}$,  we obtain 
$$\frac{\de^2 \tilde u}{\de y^2}(x,0)=-\frac{\de^2 \tilde u}{\de x^2}(x,0) =0\quad \text{for }x\in \R\setminus [-1,1].$$
It is then possible to extend $\tilde u(x,y)\equiv P$ for $y<0$, so that $\tilde u$ is harmonic on $\R^2\setminus([-1,1]\times \{0\})$ and constant on $\R^2_-$, hence $\tilde u\equiv P$ by analiticity. Therefore $u\equiv P$ in $\R$, and the theorem is proved. \hfill$\square$

\appendix

\section{Appendix}


\subsection{Fractional Laplacians and the spaces $\dot H^{\frac{1}{2},2}(\R,\mathbb{C})$, $H^{\frac{1}{2},2}(\sph^1,\mathbb{C})$}\label{a:fl}
For the proofs of the following statements, which are all classic, we refer to the Appendix of \cite{ManMar} and the references therein.

We start working on the real line.
Given $u\in \M{S}(\R,\mathbb{C})$ (the Schwartz space of rapidly decreasing complex-valued functions) and $s>0$, one can define $(-\Delta)^s u$ via Fourier transform as
\begin{equation}\label{Deltas}
(-\Delta)^s u =\M{F}^{-1}\(|\xi|^{2s}\hat u\),
\end{equation}
where
$$\M{F}(u)(\xi)=\hat u(\xi):=\frac{1}{\sqrt{2\pi}}\int_\R u(x)e^{-ix\xi}dx,$$
so that
$$u(x)=\M{F}^{-1}(\hat u)(x)=\frac{1}{\sqrt{2\pi}}\int_{\R} \hat u(\xi)e^{ix\xi}d\xi.$$
This definition can be extended by duality to every function in
$$L_s(\R,\mathbb{C}):=\left\{u\in L^1_{\loc}(\R,\mathbb{C}):\|u\|_{L_s}:=\int_{\R}\frac{|u(x)|}{1+|x|^{1+2s}}dx<\infty   \right\}, $$
in the sense of tempered distributions:
\begin{equation}\label{fraclapl2}
\langle (-\Delta)^s u,\varphi\rangle :=\int_{\R} u\overline{(-\Delta)^s \varphi} dx =\int_{\R}u\,\overline{\mathcal{F}^{-1}(|\xi|^{2s}\hat \varphi(\xi))}\,dx,\quad\text{for every }\varphi \in\mathcal{S}.
\end{equation}
Then we can define
\begin{equation}\label{defH12R}
\begin{split}
\dot H^{\frac{1}{2},2}(\R,\mathbb{C})&=\{u\in L_{\frac{1}{4}}(\R,\mathbb{C}): (-\Delta)^\frac14 u \in L^2(\R,\mathbb{C})\}\\
&=\{u\in L_{\frac{1}{4}}(\R,\mathbb{C}): |\xi|^\frac{1}{2} \hat u \in L^2(\R,\mathbb{C})\}.
\end{split}
\end{equation}
For $u\in\mathcal{S}(\R,\mathbb{C})$ one also has (see e.g. \cite[Section 3]{DPV})  that 
\begin{equation}\label{fraclapl}
(-\Delta)^s u(x)=K_s P.V.\int_{\R}\frac{u(x)-u(\xi)}{|x-\xi|^{1+2s}}d\xi:=K_s \lim_{\varepsilon\to 0}\int_{\R\setminus [-\ve,\ve]}\frac{u(x)-u(\xi)}{|x-\xi|^{1+2s}}d\xi,
\end{equation}
with $K_{\frac{1}{2}}=\frac{1}{\pi}$. In particular
\begin{equation}\label{FL}
(-\Delta)^\frac{1}{2}u(x)=\frac{1}{\pi} P.V.\int_{\R}\frac{u(x)-u(\xi)}{(x-\xi)^2}dy.
\end{equation}
Considering now the Poisson extension $\tilde u$ of $u$ given by
$$\tilde u(x,y)=\frac{1}{\pi}\int_\R\frac{yu(\xi)}{y^2+(x-\xi)^2}d\xi,\quad x\in \R, \,y>0,$$
we have (see e.g. \cite[Prop. A.1]{ManMar})
\begin{equation}\label{equivnorms}
\|(-\Delta)^\frac14 u \|_{L^2} =\|\nabla\tilde u\|_{L^2}.
\end{equation}
Indeed, for $u\in \M{S}(\R,\mathbb{C})$) we have
$$(-\Delta)^\frac12 u=- \frac{\de \tilde u}{\partial y}\bigg|_{y=0}, $$
hence with \eqref{Deltas} and integration by parts
\begin{equation}\label{FL2}
\|(-\Delta)^\frac14 u\|_{L^2}^2= \int_{\R} \bar u(-\Delta)^\frac12 u dx =-\int_{\R}\bar u \frac{\de \tilde u}{\de y}\bigg|_{y=0}dx =\int_{\R^2_+}|\nabla \tilde u|^2 dxdy.
\end{equation}
Then \eqref{equivnorms} follows by density.

We also have
\begin{equation}\label{equivnorms2}
\|(-\Delta)^\frac14 u \|_{L^2}^2= \frac{1}{2\pi}\int_\R\int_\R\frac{(u(x)-u(\xi))^2}{(x-\xi)^2}dxd\xi,
\end{equation}
which can be proven using \eqref{FL} to get
$$\int_\R \bar u(-\Delta)^\frac12 u dx=\frac{1}{\pi}\int_{\R}  \bar u(x)  P.V.\int_{\R}\frac{u(x)-u(\xi)}{(x-\xi)^2}d\xi dx$$
and exchanging the roles of $x$ and $\xi$ and summing to obtain
\begin{align*}
2\int_\R \bar u(-\Delta)^\frac12 u dx&=\frac{1}{\pi}\int_{\R}  \bar u(x)  P.V.\int_{\R}\frac{u(x)-u(\xi)}{(x-\xi)^2}d\xi dx  +\frac{1}{\pi}\int_{\R}  \bar u(y)  P.V.\int_{\R}\frac{u(\xi)-u(x)}{(x-\xi)^2}dx d\xi\\
&=\frac{1}{\pi}\int_{\R}\int_\R\frac{|u(x)-u(\xi)|^2}{(x-\xi)^2}dxd\xi.
\end{align*}
Then, using the first identity in \eqref{FL2}, \eqref{equivnorms2} follows.

\medskip

We can do similar computations for functions defined on $\sph^1$. We start by recalling that any map $u\in L^2(\sph^1,\sph^1)$ can be written in terms of its Fourier series
$$u(e^{i\theta})=\frac{1}{\sqrt{2\pi}}\sum_{k\in\mathbb{Z}}a_ke^{ik\theta},$$
where
\begin{equation}\label{deffoucoeff}
a_k=\frac{1}{\sqrt{2\pi}}\int_{\sph^1}u(e^{i\theta})e^{-ik\theta}d\theta.
\end{equation}
Defining the fractional Laplacian (in this case a half-derivative)
\begin{equation}\label{lapl14}
(-\Delta)^\frac14 u(e^{i\theta}):= \frac{1}{\sqrt{2\pi}}\sum_{k\in\mathbb{Z}}\sqrt{|k|}a_k e^{ik\theta},
\end{equation}
we can set in analogy with \eqref{defH12R}
\begin{equation}\label{defH12S1}
\begin{split}
H^{\frac{1}{2},2}(\sph^1,\mathbb{C})&=\left\{u\in L^2(\sph^1,\mathbb{C}): (-\Delta)^\frac14 u\in L^2(\sph^1,\mathbb{C})\right\}\\
&=\left\{u\in L^2(\sph^1,\mathbb{C}):\sum_{k\in\mathbb{Z}}|k||a_k|^2<\infty\right\},
\end{split}
\end{equation}
and
$$[u]_{H^{\frac{1}{2},2}(\sph^1)}^2=\sum_{k\in\mathbb{Z}}|k||a_k|^2.$$
Considering in polar coordinates $(r,\theta)$ the harmonic extension $\tilde u$ of $u$ to $D^2$
\begin{equation}\label{uPoisson}
\tilde u(re^{i\theta})=\frac{1}{\sqrt{2\pi}}\sum_{k\in\mathbb{Z}}a_kr^{|k|}e^{ik\theta},
\end{equation}
we can easily compute
\begin{equation}\label{equivnormsS1}
\|\nabla \tilde u\|_{L^2(D^2)}=[u]_{H^{\frac{1}{2},2}(\sph^1)}.
\end{equation}
On the other hand, we can also compute (see e.g. \cite[Proposition A.2]{DLMR}) 
\begin{equation}\label{FLS1}
(-\Delta)^\frac12 u(e^{i\theta})=\frac{1}{\pi}P.V. \int_{0}^{2\pi}\frac{u(e^{i\theta})-u(e^{it})}{|e^{i\theta}-e^{it}|^2}dt,
\end{equation}
and, similar to \eqref{equivnorms2} obtain
\begin{equation}\label{equivnorms3}
[u]_{H^{\frac{1}{2},2}(\sph^1)}^2=\int_{\sph^1}\bar u (-\Delta)^{\frac12}u\, d\theta= \frac{1}{\pi}\int_0^{2\pi}\int_0^{2\pi}  \frac{|u(e^{i\theta})-u(e^{it})|^2}{|e^{i\theta}-e^{it}|^2}dtd\theta.
\end{equation}
We also recall that the harmonic extension of $u$ can be expressed using the Poisson kernel:
\begin{equation}\label{uPoisson2}
\tilde u(z)=\frac{1}{2\pi}\int_{\sph^1}\frac{1-|z|^2}{|z- e^{i\theta}|}u(e^{i\theta})d\theta, \quad z\in D^2.
\end{equation}

Finally, we mention a characterisation of fractional Sobolev spaces in terms of traces of Sobolev maps on arbitrary bounded Lipschitz domains. For such a domain $\Omega\subset\R^2\simeq \mathbb{C}$ with $\Gamma:=\partial\Omega$, we define
$$[u]_{H^{\frac12,2}(\Gamma)}=\frac{1}{\pi}\int_\Gamma\int_\Gamma\frac{|u(z)-u(\zeta)|^2}{|z-\zeta|^2}d\sigma(z)d\sigma(\zeta),$$
where $d\sigma$ is the arc-length on $\Gamma$, and set
$$H^{\frac12,2}(\Gamma):=\left\{u\in L^2(\Gamma):[u]_{H^{\frac12,2}(\Gamma)}<\infty\right\}.$$ 
Then we have the following trace theorem, see e.g. \cite{Gagliardo,Prodi}:
\begin{prop}\label{trace}
For any bounded Lipschitz domain $\Omega$ with $\Gamma:=\partial\Omega$, the trace maps $u\mapsto u|_\Gamma$ is surjective from $H^{1,2}(\Omega)$ to $H^{\frac12,2}(\Gamma)$.
\end{prop}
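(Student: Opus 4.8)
The plan is to establish the two facts contained in the statement: that the trace operator $T\colon u\mapsto u|_\Gamma$ is bounded from $H^{1,2}(\Omega)$ to $H^{\frac12,2}(\Gamma)$ (so that the target space makes sense), and that it admits a bounded linear right inverse $\mathcal E\colon H^{\frac12,2}(\Gamma)\to H^{1,2}(\Omega)$, which in particular gives surjectivity. Both statements are local on $\Gamma$ and stable under bi-Lipschitz changes of variables, so I would reduce everything to the model situation of the half-plane $\R^2_+$ with $\Gamma=\R\times\{0\}$, where the required estimates are exactly \eqref{equiv1}--\eqref{equivnorms} from the Appendix.

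First I would set up the reduction. Since $\Gamma$ is compact, cover it by finitely many open sets $V_1,\dots,V_m\subset\R^2$ in each of which, after a rotation, $\Omega\cap V_i$ is the subgraph of a Lipschitz function $\gamma_i\colon\R\to\R$ and $\Gamma\cap V_i$ is its graph; then $\Phi_i(x,y)=(x,\,y-\gamma_i(x))$ is bi-Lipschitz and carries $\Omega\cap V_i$ into $\R^2_+$ and $\Gamma\cap V_i$ into $\R\times\{0\}$. Because $|\Phi_i(z)-\Phi_i(\zeta)|\simeq|z-\zeta|$ uniformly and $\Phi_i,\Phi_i^{-1}$ have bounded derivatives, the $H^{1,2}$-norm on the domain side and the Gagliardo seminorm $[\,\cdot\,]_{H^{\frac12,2}}$ on the boundary side are each comparable, with constants depending only on $\Lip(\gamma_i)$, to their push-forwards under $\Phi_i$. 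Taking a Lipschitz partition of unity $\{\chi_i\}$ near $\Gamma$ subordinate to $\{V_i\}$, and using that multiplication by a Lipschitz cutoff supported in a bounded set changes the $H^{1,2}$ and $H^{\frac12,2}$ norms by a bounded amount (a fractional Leibniz bound $[\chi f]_{H^{\frac12,2}}\le\|\chi\|_\infty[f]_{H^{\frac12,2}}+C\,\Lip(\chi)\,\|f\|_{L^2}$ on bounded sets), I would assemble the global statements from the model ones: for boundedness, $[Tu]_{H^{\frac12,2}(\Gamma)}\lesssim\sum_i[\chi_i\,Tu]_{H^{\frac12,2}(\Gamma)}$ and each term is transported to $\R$; for surjectivity, write $g=\sum_i\chi_i g$, extend each $\chi_i g$ from $\R$ to $\R^2_+$, transport back to $\Omega\cap V_i$, extend by zero (legitimate since $\chi_i g$ vanishes near $\partial V_i$), and sum.

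It remains to treat the model half-plane. For boundedness I would take $u\in C^\infty_c(\overline{\R^2_+})$ (dense in $H^{1,2}(\R^2_+)$); then $g:=u(\cdot,0)$ is compactly supported and smooth, hence $g\in H^{\frac12,2}(\R)$, and its harmonic extension $\tilde g$ minimizes the Dirichlet energy among $H^{1,2}(\R^2_+)$-competitors with the same trace, so by \eqref{equiv1}
$$[g]_{H^{\frac12,2}(\R)}^2 \;=\; 2\pi\,\|\nabla\tilde g\|_{L^2(\R^2_+)}^2 \;\le\; 2\pi\,\|\nabla u\|_{L^2(\R^2_+)}^2,$$
while $\|g\|_{L^2(\R)}\le C\|u\|_{H^{1,2}(\R^2_+)}$ follows from the slicing identity $|u(x,0)|^2=-\int_0^\infty\partial_y\big(|u(x,y)|^2e^{-y}\big)\,dy$ together with $2|u\cdot\partial_y u|\le|u|^2+|\partial_y u|^2$; one then extends $T$ by density. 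For surjectivity, given $g\in H^{\frac12,2}(\R)$ supported in a bounded set, I would take its Poisson extension $\tilde g$ (finite Dirichlet energy $\tfrac{1}{2\pi}[g]_{H^{\frac12,2}(\R)}^2$ by \eqref{equiv1}, and $\|\tilde g(\cdot,y)\|_{L^2(\R)}\le\|g\|_{L^2(\R)}$ for every $y>0$), cut off by $\zeta\in C_c^\infty(\R^2)$ equal to $1$ near $\supp g$, and set $U=\zeta\tilde g$; then $U\in H^{1,2}(\R^2_+)$ with $\|U\|_{H^{1,2}(\R^2_+)}\lesssim\|g\|_{H^{\frac12,2}(\R)}$, and $U|_{\R\times\{0\}}=g$ because the Poisson kernel is an approximate identity and $\zeta(\cdot,0)\equiv1$ on $\supp g$.

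I expect the main obstacle to be organisational rather than deep. The delicate points are (i) verifying that the Gagliardo seminorm genuinely transforms well under the bi-Lipschitz graph maps $\Phi_i$ — this is where the comparison $|\Phi_i(z)-\Phi_i(\zeta)|\simeq|z-\zeta|$ is essential — and (ii) making sure the cutting and pasting with partitions of unity does not manufacture the pathological non-local singularities illustrated in Propositions \ref{example1} and \ref{example2}; this is guaranteed precisely because all cutoffs are Lipschitz (not merely bounded) and compactly supported, so the double-integral part of the $H^{\frac12,2}$ seminorm stays under control. Once these two points are settled, the model-case identities from the Appendix finish the argument. (Alternatively, one may simply invoke the classical theorems of Gagliardo and Prodi cited above.)
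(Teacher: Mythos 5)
The paper does not actually prove this proposition: it is stated as a classical fact and attributed to Gagliardo and Prodi, so there is no internal argument to compare against. Your sketch is the standard modern proof of that classical result --- localization by a Lipschitz partition of unity, flattening of the boundary by the bi-Lipschitz graph maps $\Phi_i$, and then the half-plane model case where the Poisson extension simultaneously gives the boundedness of the trace (via the Dirichlet-minimality of the harmonic extension together with \eqref{equiv1}) and a bounded right inverse --- and it is essentially correct. This route has the advantage of being self-contained within the paper's own toolbox, since \eqref{equiv1} and \eqref{equivnorms} are exactly the model-case identities you need; Gagliardo's original argument instead builds the extension by an explicit averaging formula directly on the Lipschitz graph, without passing through harmonic extensions. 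Two points in your outline deserve the extra care you partly flag: in the boundedness direction, after writing $Tu=\sum_i\chi_i Tu$ you must still account for the off-diagonal part of the double integral where the two points lie in different charts (controlled by the $L^2$ bound on the trace, since there $|z-\zeta|$ is bounded below once the covering is refined so that non-adjacent charts are separated); and in the surjectivity direction, the cutoff $\zeta$ in the model case must be chosen so that the extension of $\chi_i g$, once transported back, is supported compactly inside $\Omega\cap V_i$, otherwise the extension by zero would create a jump across $\Omega\cap\partial V_i$ and leave $H^{1,2}(\Omega)$. Both are routine, and neither affects the validity of the argument.
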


\subsection{Density results}

\begin{prop}\label{l:approx}
The space of functions $u\in C^\infty(\R,\sph^1)$ which are constant outside a compact set is dense in $\dot H^{\frac{1}{2},2}(\R,\sph^1)$. Equivalently, the space of functions $u\in C^\infty(\sph^1,\sph^1) $  which are constant in a  neighborhood  (depending on $u$) of any fixed point $z_0\in \sph^1$ are dense in $H^{\frac{1}{2},2}(\sph^1,\sph^1)$.  
\end{prop}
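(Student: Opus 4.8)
The plan is to reduce everything to $\sph^1$ and split the argument in two parts. By the identification of $\dot H^{\frac12,2}(\R,\sph^1)$ with $H^{\frac12,2}(\sph^1,\sph^1)$ via $u\mapsto u\circ\Pi_+$ recalled in the introduction, a map on $\R$ constant outside a compact set corresponds to a map on $\sph^1$ constant near $i$; and since a rotation of $\sph^1$ preserves the seminorm, smoothness, and the property of being constant near a point, it suffices to prove that the smooth maps $\sph^1\to\sph^1$ which are constant near a fixed point $z_0$ are dense in $H^{\frac12,2}(\sph^1,\sph^1)$, and we may take $z_0=1$. I would do this in two steps: (A) density of arbitrary smooth maps $C^\infty(\sph^1,\sph^1)$ in $H^{\frac12,2}(\sph^1,\sph^1)$, and (B) approximation of a fixed smooth map by smooth maps constant near $z_0$; the proposition then follows by the triangle inequality.

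For step (A), given $u\in H^{\frac12,2}(\sph^1,\sph^1)$ I would mollify on $\sph^1\cong\R/2\pi\mathbb{Z}$: with a nonnegative $\rho_\ve\in C^\infty_c$ with $\int\rho_\ve=1$, set $u_\ve:=\rho_\ve*u\in C^\infty(\sph^1,\mathbb C)$, so that $u_\ve\to u$ in $H^{\frac12,2}(\sph^1,\mathbb C)$ and a.e., with $\|u_\ve\|_{L^\infty}\le1$ (a convex combination of unit vectors). These take values close to $\sph^1$: since $|u|\equiv1$ and $\int\rho_\ve=1$, the variance identity gives $1-|u_\ve(x)|^2=\int_{\sph^1}\rho_\ve(x-y)|u(y)-u_\ve(x)|^2\,dy$, whose right-hand side is controlled by the mean oscillation of $u$ over balls of radius $2\ve$; since $H^{\frac12,2}(\sph^1)\hookrightarrow VMO(\sph^1)$, this tends to $0$ uniformly in $x$, so that $\||u_\ve|-1\|_{L^\infty}\to0$. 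Hence for $\ve$ small $u_\ve$ takes values in $\{\tfrac{3}{4}\le|z|\le1\}$, and fixing a map $P\in C^\infty(\mathbb C,\mathbb C)$ with bounded first and second derivatives which equals $z\mapsto z/|z|$ on a neighborhood of $\sph^1$, the function $v_\ve:=P\circ u_\ve\in C^\infty(\sph^1,\sph^1)$ is well defined and $u=P\circ u$.

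It then remains to show $v_\ve\to u$ in $H^{\frac12,2}$, and this is the delicate point, since $u$ need not be continuous, so $u_\ve\not\to u$ in $L^\infty$ and post-composition with the projection is not \emph{a priori} continuous at $u$ for the seminorm. I would argue as follows. Writing $w_\ve:=u_\ve-u$ (so $\|w_\ve\|_{L^\infty}\le2$, $w_\ve\to0$ a.e.\ and in $L^2$), the smoothness of $P$ gives the globally valid expansion $v_\ve-u=DP(u)w_\ve+R_\ve$ with $|R_\ve|\le C|w_\ve|^2$, and moreover $R_\ve=S_\ve w_\ve$ with $|S_\ve(x)|\le C|w_\ve(x)|$ and $|S_\ve(x)-S_\ve(y)|\le C(|u(x)-u(y)|+|w_\ve(x)-w_\ve(y)|)$. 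Inserting this into the difference quotients of $v_\ve-u$, and using that $DP$ is bounded and Lipschitz, every resulting double integral over $\sph^1\times\sph^1$ is bounded by a constant times one of the three quantities $[w_\ve]_{H^{\frac12,2}}^2$, $\|w_\ve\|_{L^\infty}^2\,[w_\ve]_{H^{\frac12,2}}^2$, or $\int_{\sph^1}|w_\ve(y)|^2\Big(\int_{\sph^1}\frac{|u(x)-u(y)|^2}{|x-y|^2}\,dx\Big)\,dy$. The first two tend to $0$ because $[w_\ve]_{H^{\frac12,2}}=[u_\ve-u]_{H^{\frac12,2}}\to0$ and $\|w_\ve\|_{L^\infty}$ is bounded; the third tends to $0$ by dominated convergence, since $|w_\ve|^2\le4$, $w_\ve\to0$ a.e., and $y\mapsto\int_{\sph^1}\frac{|u(x)-u(y)|^2}{|x-y|^2}\,dx$ lies in $L^1(\sph^1)$ (its integral being a multiple of $[u]_{H^{\frac12,2}}^2<\infty$). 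Together with $\|v_\ve-u\|_{L^2}\le C\|w_\ve\|_{L^2}\to0$, this yields $v_\ve\to u$ in $H^{\frac12,2}(\sph^1,\sph^1)$, completing (A); the only genuinely subtle ingredients are thus the $VMO$-embedding, which makes the projection legitimate, and this dominated-convergence estimate, which substitutes for the missing $L^\infty$-convergence of $u_\ve$.

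For step (B), write a given $v\in C^\infty(\sph^1,\sph^1)$ near $z_0=1$ as $v(e^{i\theta})=e^{i\varphi(\theta)}$ with $\varphi\in C^\infty((-\eta,\eta))$, choose cutoffs $\chi_\delta\in C^\infty(\R,[0,1])$ with $\chi_\delta\equiv1$ on $(-\tfrac{\delta}{2},\tfrac{\delta}{2})$, $\supp\chi_\delta\subset(-\delta,\delta)$, $|\chi_\delta'|\le C/\delta$, and set $v_\delta:=e^{i\varphi_\delta}$, $\varphi_\delta:=\varphi-\chi_\delta(\varphi-\varphi(0))$, on $\{|\theta|<\eta\}$ and $v_\delta:=v$ elsewhere (the two definitions agree on $\{\delta\le|\theta|<\eta\}$, so $v_\delta\in C^\infty(\sph^1,\sph^1)$). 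Then $v_\delta\equiv v(1)$ on $\{|\theta|<\tfrac{\delta}{2}\}$, $v_\delta=v$ for $|\theta|\ge\delta$, and $f_\delta:=v_\delta-v$ is supported in $\{|\theta|\le\delta\}$ with $\|f_\delta\|_{L^\infty}\le C\delta$, $|f_\delta'|\le C$ and $f_\delta(e^{\pm i\delta})=0$ (hence $|f_\delta(e^{i\theta})|\le C(\delta-|\theta|)$), the constants independent of $\delta$. Splitting $[f_\delta]_{H^{\frac12,2}(\sph^1)}^2$ into the region where both points lie in $\{|\theta|\le\delta\}$ (bounded by $C\delta^2$ via $|f_\delta'|\le C$) and the region where one lies outside (also bounded by $C\delta^2$, via $|f_\delta(e^{i\theta})|\le C(\delta-|\theta|)$ and $\int_{|\psi|\ge\delta}|\theta-\psi|^{-2}\,d\psi\le C/(\delta-|\theta|)$), one finds $[f_\delta]_{H^{\frac12,2}}^2\le C\delta^2\to0$, while $\|f_\delta\|_{L^2}\to0$ trivially; hence $v_\delta\to v$ in $H^{\frac12,2}(\sph^1,\sph^1)$ as $\delta\to0$, which together with (A) proves the proposition.
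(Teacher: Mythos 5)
Your proposal is correct, but it follows a genuinely different route from the paper's. The paper first replaces $u$ by the sliding averages $u_{\av,\ve}$ (Lemma \ref{l:approx0}), whose modulus tends to $1$ uniformly by a Jensen-type estimate, thereby reducing to $u\in C^{0,1}(\sph^1,\sph^1)$; it then passes to the harmonic extension $\tilde u$ on $D^2$, approximates it by smooth maps $U_k$ with $|\nabla U_k|\le C$, performs the cutoff \emph{in the disk} near the point $i$ (where the $H^{1,2}$ cost of replacing $U_k$ by the constant $U_k(i)$ on $B_\ve(i)$ is immediate to estimate), restricts to the boundary, and renormalizes onto $\sph^1$ via the Vitali-type Lemma \ref{lem-Vitali}. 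You instead stay entirely on $\sph^1$: your step (A) is the same mollify-and-project idea (your variance identity plus the $VMO$ embedding is exactly the content of the first part of Lemma \ref{l:approx0}), but you prove the stability of the nearest-point projection under $H^{\frac12,2}$-convergence by an explicit Taylor expansion $v_\ve-u=DP(u)w_\ve+S_\ve w_\ve$ and dominated convergence against the $L^1$ function $y\mapsto\int_{\sph^1}|u(x)-u(y)|^2|x-y|^{-2}dx$, where the paper invokes uniform integrability and Vitali's theorem; and your step (B) replaces the paper's cutoff of the extension by a direct phase cutoff of the smooth boundary map, with the quantitative bound $[v_\delta-v]_{H^{\frac12,2}}^2\le C\delta^2$ (which I checked: the Lipschitz bound $|f_\delta'|\le C$ handles the diagonal block, and $|f_\delta(e^{i\theta})|\le C(\delta-|\theta|)$ together with $\int_{|\psi|\ge\delta}|\theta-\psi|^{-2}d\psi\le C(\delta-|\theta|)^{-1}$ handles the off-diagonal one). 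Your version avoids the extension/trace machinery altogether and makes explicit why the local-replacement pathology of Proposition \ref{example1} cannot occur for smooth maps; the paper's detour through $H^{1,2}(D^2)$ buys a one-line cutoff estimate and reuses the trace theorem already present in the appendix.
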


In the proof of Proposition \ref{l:approx} we will borrow ideas from \cite{BN}. We first need two lemmas.

\begin{lem} \label{lem-Vitali} Let $(u_k)\subset H^{\frac12,2}(\sph^1,\mathbb{C})$ be such that $u_k\to u$ a.e. in $\sph^1$ for some $u\in H^{\frac12,2}(\sph^1,\sph^1)$. If $u_k\to u$ in $H^{\frac12,2}(\sph^1,\mathbb{C})$ and  $|u_k|\geq\delta$ a.e. in $\sph^1$ for some $\delta>0$ independent of $k$, then  $\frac{u_k}{|u_k|}\to u$ in $H^{\frac12,2}(\sph^1,\sph^1)$.  \end{lem}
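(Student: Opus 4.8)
The plan is to exploit the fact that the normalization $v\mapsto v/|v|$ is a locally Lipschitz map on $\{|v|\ge\delta\}\subset\mathbb{C}$, combined with a Vitali-type convergence argument for the Gagliardo double integral. First I would write, for $w,w'\in\mathbb{C}$ with $|w|,|w'|\ge\delta$, the elementary pointwise bound
$$\left|\frac{w}{|w|}-\frac{w'}{|w'|}\right|\le \frac{C}{\delta}|w-w'|,$$
which follows from the chain rule / mean value inequality applied to the smooth map $v\mapsto v/|v|$ on the region $|v|\ge\delta$ (one can make $C$ explicit, e.g.\ $C=2$). Applying this with $w=u_k(e^{i\theta})$, $w'=u_k(e^{it})$ gives
$$\frac{\left|\frac{u_k}{|u_k|}(e^{i\theta})-\frac{u_k}{|u_k|}(e^{it})\right|^2}{|e^{i\theta}-e^{it}|^2}\le \frac{C^2}{\delta^2}\,\frac{|u_k(e^{i\theta})-u_k(e^{it})|^2}{|e^{i\theta}-e^{it}|^2},$$
so that $u_k/|u_k|\in H^{\frac12,2}(\sph^1,\sph^1)$ and, more importantly, the integrands defining $[u_k/|u_k|]^2_{H^{\frac12,2}(\sph^1)}$ are dominated, uniformly in $k$, by $\frac{C^2}{\delta^2}$ times the integrands of $[u_k]^2_{H^{\frac12,2}(\sph^1)}$.

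Next I would set up the convergence. Since $u_k\to u$ a.e.\ and $|u_k|\ge\delta$ with $|u|=1$, we have $u_k/|u_k|\to u$ a.e.\ on $\sph^1$, hence the integrand
$$f_k(\theta,t):=\frac{\left|\frac{u_k}{|u_k|}(e^{i\theta})-\frac{u_k}{|u_k|}(e^{it})\right|^2}{|e^{i\theta}-e^{it}|^2}$$
converges a.e.\ on $\sph^1\times\sph^1$ to $f(\theta,t):=\frac{|u(e^{i\theta})-u(e^{it})|^2}{|e^{i\theta}-e^{it}|^2}$. It is dominated by $g_k:=\frac{C^2}{\delta^2}\,\frac{|u_k(e^{i\theta})-u_k(e^{it})|^2}{|e^{i\theta}-e^{it}|^2}$, and by hypothesis $u_k\to u$ in $H^{\frac12,2}(\sph^1,\mathbb{C})$, so $g_k\to g:=\frac{C^2}{\delta^2}f$ in $L^1(\sph^1\times\sph^1)$ (indeed $\int g_k=\frac{C^2}{\delta^2}[u_k]^2_{H^{\frac12,2}}\to \frac{C^2}{\delta^2}[u]^2_{H^{\frac12,2}}=\int g$, and $g_k\to g$ a.e.). Therefore the generalized dominated convergence theorem (Vitali's theorem in the form: if $f_k\to f$ a.e., $|f_k|\le g_k$, $g_k\to g$ a.e.\ in $L^1$ with $\int g_k\to\int g$, then $\int f_k\to\int f$) gives
$$[u_k/|u_k|]^2_{H^{\frac12,2}(\sph^1)}=\int_{\sph^1}\int_{\sph^1}f_k\,d\theta\,dt\longrightarrow \int_{\sph^1}\int_{\sph^1}f\,d\theta\,dt=[u]^2_{H^{\frac12,2}(\sph^1)}.$$

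Finally I would upgrade convergence of seminorms to convergence in $H^{\frac12,2}(\sph^1,\mathbb{C})$. The $L^2$ part is immediate: $u_k/|u_k|\to u$ a.e.\ and $|u_k/|u_k||=1=|u|$ are uniformly bounded, so $u_k/|u_k|\to u$ in $L^2(\sph^1,\mathbb{C})$ by dominated convergence. For the seminorm, the Hilbert space structure on $\dot H^{\frac12,2}$ means that a.e.\ (hence weak) convergence together with convergence of norms implies strong convergence: writing $v_k:=u_k/|u_k|$, we have $v_k\rightharpoonup u$ and $[v_k]_{H^{\frac12,2}}\to[u]_{H^{\frac12,2}}$, so $[v_k-u]^2_{H^{\frac12,2}}=[v_k]^2-2\langle v_k,u\rangle+[u]^2\to 0$, where $\langle\cdot,\cdot\rangle$ is the bilinear form associated with the seminorm. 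This completes the proof. I do not expect any serious obstacle here; the only mild care needed is the passage to the limit in the double integral, where one should invoke Vitali rather than plain dominated convergence since the dominating functions $g_k$ themselves vary with $k$ — but that is precisely what the $L^1$ convergence $g_k\to g$ (equivalently, convergence of the seminorms $[u_k]_{H^{\frac12,2}}$) provides.
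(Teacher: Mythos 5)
Your proof is correct and follows essentially the same route as the paper: a Lipschitz bound for the normalization map on $\{|v|\ge\delta\}$ dominates the Gagliardo integrand of $u_k/|u_k|$ by that of $u_k$, and a Vitali/generalized dominated convergence argument then passes to the limit. The paper streamlines the final step by applying this scheme to $\varphi(z)=z/|z|-z$, so that the target function is identically $0$ and no upgrade from weak convergence plus convergence of seminorms to strong convergence is needed, but your extra Hilbert-space step is also valid.
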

\begin{proof} Setting  $\vp(z)=\frac{z}{|z|}-z$ we get that  $\vp$ is smooth in $\mathbb{C}^2\setminus\{0\}$, and $|\nabla \vp|\leq C(\delta)$ on $\mathbb{C}\setminus B_\delta$. The lemma follows if  we show that $\vp(u_k)\to \vp(u)\equiv0$ in  $H^{\frac12,2}(\sph^1,\mathbb{C})$.


We set $$g_k(z,\zeta):=\frac{|\vp( u_k(z))-\vp(u_k(\zeta))|^2}{|z-\zeta|^2},\quad z,\zeta\in\sph^1.$$
Then \begin{align*}
|g_ k(z,\zeta)|&\leq  C\frac{|u_k(z)- u_k(\zeta)|^2}{|z-\zeta|^2}\leq  C \frac{|(u_k(z)-u(z))-( u_k(\zeta)-u(\zeta))|^2}{|z-\zeta|^2}+ C\frac{|u(z)- u(\zeta)|^2}{|z-\zeta|^2}.\end{align*} This shows that the family $(g_k)$ is uniformly integrable in $\sph^1\times\sph^1$. In particular, as $g_k\to 0$ a.e. in $\sph^1\times\sph^1$, by Vitali's convergence theorem we get that $g_k\to0$ in $L^1 (\sph^1\times\sph^1)$, which gives $\vp(u_k)\to 0$ in  $H^{\frac12,2}(\sph^1,\mathbb{C})$. 
\end{proof}

\begin{lem}\label{l:approx0} Given  $u\in H^{\frac{1}{2},2}(\sph^1,\sph^1)$ and $\ve>0$ we set $$u_{\av,\ve}(e^{i\theta}):=\fint_{\theta-\ve}^{\theta+\ve} u(e^{it})\, dt.$$
Then $|u_{\av,\ve}|=1+o_\ve(1)$,  $u_{\av,\ve}\in C^{0,1}(\sph^1,\mathbb C)$, $u_{\av,\ve}\to u$ in  $H^{\frac12,2}(\sph^1,\mathbb{C})$ and $\frac{u_{\av,\ve}}{|u_{\av,\ve}|}\to u$ in  $H^{\frac12,2}(\sph^1,\sph^1)$. 
\end{lem}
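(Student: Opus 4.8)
\emph{Proof proposal.} The plan is to prove the four assertions in the stated order, the common engine being the uniform local oscillation estimate
$$F(\ve):=\sup_{|I|=2\ve}\ \int_I\int_I\frac{|u(e^{is})-u(e^{it})|^2}{|e^{is}-e^{it}|^2}\,ds\,dt\ \longrightarrow\ 0\qquad\text{as }\ve\to0^+,$$
the supremum being over all intervals $I\subset\R$ of length $2\ve$. By \eqref{equivnorms3} the kernel $(s,t)\mapsto\frac{|u(e^{is})-u(e^{it})|^2}{|e^{is}-e^{it}|^2}$ belongs to $L^1(\sph^1\times\sph^1)$, so $F(\ve)\to0$ follows from the absolute continuity of the Lebesgue integral: given $\delta>0$ pick $\eta>0$ with $\int_A(\cdots)<\delta$ whenever $|A|<\eta$, and note $|I\times I|=4\ve^2<\eta$ for $\ve$ small, uniformly in the position of $I$.

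\medskip

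Next I would handle $(2)$. Writing $v(\theta):=u(e^{i\theta})$, the function $\theta\mapsto u_{\av,\ve}(e^{i\theta})=\frac1{2\ve}\int_{\theta-\ve}^{\theta+\ve}v(t)\,dt$ is absolutely continuous with $\frac{d}{d\theta}u_{\av,\ve}(e^{i\theta})=\frac1{2\ve}\big(v(\theta+\ve)-v(\theta-\ve)\big)$, hence $|\frac{d}{d\theta}u_{\av,\ve}|\le\frac1\ve$ a.e.\ since $|v|=1$ a.e.; thus $u_{\av,\ve}\in C^{0,1}(\sph^1,\mathbb C)$ with Lipschitz constant $\le1/\ve$. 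For $(1)$, expanding the square and using $|v|\equiv1$ (so that $v(t)\cdot v(s)=1-\tfrac12|v(t)-v(s)|^2$, with $\cdot$ the Euclidean inner product on $\R^2$) gives
$$|u_{\av,\ve}(e^{i\theta})|^2=\fint_{\theta-\ve}^{\theta+\ve}\!\fint_{\theta-\ve}^{\theta+\ve}v(t)\cdot v(s)\,dt\,ds=1-\tfrac12\fint_{\theta-\ve}^{\theta+\ve}\!\fint_{\theta-\ve}^{\theta+\ve}|v(t)-v(s)|^2\,dt\,ds.$$
Since $|v(t)-v(s)|^2\le|t-s|^2\,\frac{|v(t)-v(s)|^2}{|e^{it}-e^{is}|^2}\le(2\ve)^2\,\frac{|v(t)-v(s)|^2}{|e^{it}-e^{is}|^2}$ for $s,t$ in an interval of length $2\ve$, the last double average is bounded by $F(\ve)$, whence $1-\tfrac12F(\ve)\le|u_{\av,\ve}(e^{i\theta})|^2\le1$ uniformly in $\theta$. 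This yields $|u_{\av,\ve}|=1+o_\ve(1)$ and, in particular, $|u_{\av,\ve}|\ge\tfrac12$ on $\sph^1$ for all sufficiently small $\ve$.

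\medskip

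For $(3)$ I would argue in Fourier series: if $u=\frac1{\sqrt{2\pi}}\sum_{k}a_ke^{ik\theta}$, then, since $\fint_{\theta-\ve}^{\theta+\ve}e^{ikt}\,dt=\mathrm{sinc}(k\ve)\,e^{ik\theta}$ with $\mathrm{sinc}(x)=\frac{\sin x}{x}$ (and $\mathrm{sinc}(0)=1$), we get $u_{\av,\ve}=\frac1{\sqrt{2\pi}}\sum_k a_k\,\mathrm{sinc}(k\ve)\,e^{ik\theta}$, hence
$$\|u_{\av,\ve}-u\|_{L^2(\sph^1)}^2+[u_{\av,\ve}-u]_{H^{\frac12,2}(\sph^1)}^2=\sum_{k\in\mathbb Z}(1+|k|)\big(1-\mathrm{sinc}(k\ve)\big)^2|a_k|^2.$$
Each summand tends to $0$ as $\ve\to0$ and is dominated by $4(1+|k|)|a_k|^2$, which is summable because $u\in H^{\frac12,2}$; dominated convergence then gives $u_{\av,\ve}\to u$ in $H^{\frac12,2}(\sph^1,\mathbb C)$. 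Finally, for $(4)$: the Lebesgue differentiation theorem gives $u_{\av,\ve}\to u$ a.e.\ on $\sph^1$, and combining this with $(3)$ and the uniform lower bound $|u_{\av,\ve}|\ge\tfrac12$ from $(1)$, Lemma \ref{lem-Vitali} applied along an arbitrary sequence $\ve_n\to0$ yields $\frac{u_{\av,\ve}}{|u_{\av,\ve}|}\to u$ in $H^{\frac12,2}(\sph^1,\sph^1)$.

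\medskip

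The only point requiring a (small) idea is the estimate $F(\ve)\to0$; everything else reduces to Fubini, the Lebesgue differentiation theorem, dominated convergence, and the already established Lemma \ref{lem-Vitali}. Even $F(\ve)\to0$ is soft — it is merely uniform absolute continuity of an $L^1$ integral over the shrinking squares $I\times I$ — the one point of care being that $|I\times I|$ depends on $\ve$ alone, so the bound is automatically uniform in the location of $I$.
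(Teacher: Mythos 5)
Your argument is correct, and for most of the lemma it runs parallel to the paper's: the uniform estimate $|u_{\av,\ve}|=1+o_\ve(1)$ is obtained in both cases from the absolute continuity of the integral $\int\int\frac{|u(e^{is})-u(e^{it})|^2}{|e^{is}-e^{it}|^2}\,ds\,dt$ over shrinking squares (the paper phrases it via Jensen applied to $(1-|u_{\av,\ve}|)^2$, you via the exact identity $|u_{\av,\ve}|^2=1-\tfrac12\fint\fint|v(t)-v(s)|^2$, which is if anything a little cleaner and also delivers the uniform lower bound $|u_{\av,\ve}|\ge\tfrac12$ needed for Lemma \ref{lem-Vitali}), and the final step $\frac{u_{\av,\ve}}{|u_{\av,\ve}|}\to u$ is handled identically through Lemma \ref{lem-Vitali}. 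The genuine divergence is in the proof that $u_{\av,\ve}\to u$ in $H^{\frac12,2}(\sph^1,\mathbb{C})$: the paper shows that the family of difference quotients $\frac{|u_{\av,\ve}(e^{i\theta_1})-u_{\av,\ve}(e^{i\theta_2})|^2}{|e^{i\theta_1}-e^{i\theta_2}|^2}$ is uniformly integrable (by averaging translates of the kernel of $u$) and concludes with Vitali's convergence theorem, whereas you diagonalize the averaging operator in Fourier series, identify it as the multiplier $\mathrm{sinc}(k\ve)$, and conclude by dominated convergence on $\sum_k(1+|k|)(1-\mathrm{sinc}(k\ve))^2|a_k|^2$. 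Your route is shorter and exploits the exact translation invariance of the symmetric average on $\sph^1$; the paper's Vitali argument is more robust (it would survive non-convolution-type mollifications and does not rely on the Fourier characterization of the seminorm). You also supply the Lipschitz bound $|\frac{d}{d\theta}u_{\av,\ve}|\le\ve^{-1}$ explicitly, which the paper leaves implicit. No gaps.
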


\begin{proof} 
By the  Jensen's inequality
\[\begin{split}
(1-|u_{\av,\ve}(e^{i\theta})|)^2&\le\fint_{\theta-\ve}^{\theta+\ve}|u(e^{it})-u_{\av,\ve}(e^{i\theta})|^2dt\\
&=\fint_{\theta-\ve}^{\theta+\ve}\left|\fint_{\theta-\ve}^{\theta+\ve}u(e^{it})-u(e^{is})\,ds\right|^2 dt\\
&\le \fint_{\theta-\ve}^{\theta+\ve}\fint_{\theta-\ve}^{\theta+\ve}|u(e^{it})-u(e^{is})|^2\,ds dt\\
&\le \int_{\theta-\ve}^{\theta+\ve}\int_{\theta-\ve}^{\theta+\ve}\frac{|u(e^{it})-u(e^{is})|^2}{|e^{it}-e^{is}|^2}\,ds dt.\\
\end{split}\]
where in the last inequality we used that $|e^{it}-e^{is}|\le 2\ve$ for $s,t\in [\theta-\ve,\theta+\ve]$.
By \eqref{equivnorms3} and the absolute continuity of the Lebesgue integral, the last integral goes to $0$ as $\ve \to 0$ uniformly with respect to $\theta$, hence $|u_{\av,\ve}|\to 1$ uniformly as $\ve\to 0$. 

We now prove that $u_{\av,\ve} \to u$ in $H^{\frac12,2}(\sph^1,\sph^1)$. To this end we write \begin{align*} u_{\av,\ve}(e^{i\theta_1})- u_{\av,\ve}(e^{i\theta_2})=\fint_{-\ve}^\ve  \left(  u (e^{i(\theta_1+t)})-  u (e^{i(\theta_2+t)}) \right) dt.\end{align*} Then for any $A_1,A_2\subset (0,2\pi)$, \begin{align*} \int_{A_1}\int_{A_2} \frac{| u_{\av,\ve}(e^{i\theta_1})- u_{\av,\ve}(e^{i\theta_2})|^2}{|  e^{i\theta_1}-  e^{i\theta_2}|^2}d\theta_2 d\theta_1 & \leq \fint_{-\ve}^\ve \int_{A_1}\int_{A_2} \frac{|   u (e^{i(\theta_1+t)})-  u (e^{(i(\theta_2+t)})|^2}{|  e^{i(\theta_1+t)}-  e^{i(\theta_2+t)}|^2}d\theta_2 d\theta_1 dt\\ &= \fint_{-\ve}^\ve \int_{A_1+t}\int_{A_2+t} \frac{|   u (e^{i\theta_1 })-  u (e^{(i\theta_2 })|^2}{|  e^{i\theta_1 }-  e^{i\theta_2}|^2}d\theta_2 d\theta_1 dt.\end{align*} This shows that the family $ \frac{| u_{\av,\ve}(e^{i\theta_1})- u_{\av,\ve}(e^{i\theta_2})|^2}{|  e^{i\theta_1}-  e^{i\theta_2}|^2}$ is uniformly integrable in $(0,2\pi)\times(0,2\pi)$. Therefore, as $u_{\av,\ve}\to u$ a.e.,   by the Vitali's convergence theorem we get that $u_{\av,\ve}\to u$ in  $H^{\frac12,2}(\sph^1,\mathbb{C})$. 

Now the convergence $\frac{u_{\av,\ve}}{|u_{\av,\ve}|}\to u$ in  $H^{\frac12,2}(\sph^1,\sph^1)$ follows from Lemma \ref{lem-Vitali}.
\end{proof}

\noindent\emph{Proof of Proposition \ref{l:approx}} We first prove that the space of functions $u\in C^\infty(\sph^1,\sph^1) $  which are constant in a neighborhood of a fixed point, which we can assume to be $i$, is dense in $H^{\frac{1}{2},2}(\sph^1,\sph^1)$.  

Let $u\in H^{\frac{1}{2},2}(\sph^1,\sph^1)$. It follows from Lemma \ref{l:approx0} that it suffices to consider $u\in C^{0,1}(\sph^1,\sph^1)$. Let $\tilde u\in H^{1,2}(D^2)$ be the harmonic extension of $u$ in $D^2$. Writing the Poisson kernel as 
$$K(z,e^{i\theta}):=\frac{1-|z|^2}{2\pi |z-e^{i\theta}|^2}=\frac{1-r^2}{2\pi |re^{it}-e^{i\theta}|^2},\quad z=re^{it}\in D^2,$$
from \eqref{uPoisson2}, and using that $u\in C^{0,1}(\sph^1,\sph^1)$, we have
\[\begin{split}
\tilde u(re^{it})&=u(e^{it})+\int_{\sph^1}K(re^{it},e^{i\theta})\left(u(e^{i\theta})-u(e^{it})\right)d\theta\\
&=u(e^{it})+O\left(\int_{\partial D^2}K(re^{it},e^{i\theta})|e^{it}-e^{i\theta}|d\theta\right)\\
&=u(e^{it})+o(1)\quad \text{as }r\to 1.
\end{split}\]
In particular, $|\tilde u|\to 1$ in $D^2\setminus B_{1-\ve}$ uniformly as $\ve\to 0$, and $|\nabla \tilde u|\in L^\infty(D^2)$. Hence, one can find $U_k\in C^\infty(\bar D^2)$ such that $U_k\to U$ in $H^{1,2}(D^2)$, $|\nabla U_k|\leq C$, and $|U_k|\to 1$ as $k\to \infty$ uniformly on $\partial D^2$. 

For $\ve>0$ let $\eta_\ve\in C^\infty(\mathbb{C})$ be such that $\eta_\ve\equiv 0$ in $B_\ve (i)$, $\eta_\ve\equiv 1$ in $B^c_{2\ve}(i)$, $0\leq\eta_\ve\leq1$, and $|\nabla \eta_\ve|\leq\frac{4}{\ve}$. We now define
$$U_{k,\ve}\in C^\infty(\overline{D^2},\mathbb{C}), \quad U_{k,\ve}:=U_k\eta_\ve+U_k(i)(1-\eta_\ve).$$ Clearly each $U_{k,\ve}$ is constant in $B_\ve(i)$. Notice that
$$\nabla (U_{k,\ve}-U)=(U_k(i)-U_k)\nabla \eta_\ve+\nabla U_k(1-\eta_\ve),$$ which shows that $U_{k,\ve}\to U_k$ in $H^{1,2}(D^2)$ as $\ve\to0$. Moreover, as $|\nabla U_k|\leq C$, one easily obtains that $|U_{k,\ve}|=1+o_{k,\ve}(1)$ on $\partial D^2$. Thus, for some suitable choice of $\ve=\ve_k$, setting $u_k(z):=\frac{U_{k,\ve_k}(z)}{|U_{k,\ve_k}(z)|}$ on $\sph^1=\partial D^2$ we see that $u_k\to u$ in $H^{\frac{1}{2},2}(\sph^1,\sph^1)$, thanks to Lemma \ref{lem-Vitali}.

\medskip

The second part of the proposition follows from the equivalence of the Dirichlet energies
$$\int_{0}^{2\pi}\int_{0}^{2\pi}\frac{|f\circ\Pi_+(e^{it})-f\circ\Pi_+(e^{is})|^2}{|e^{it}-e^{is}|}dtds=\int_\R\int_\R\frac{|f(x)-f(y)|^2}{|x-y|^2}dxdy,$$ for any   $f\in \dot H^{\frac12,2}(\R,\sph^1)$,
which is turn follows from \eqref{equivnorms}, \eqref{equivnormsS1} and the fact that $\Pi_+$ can be extended to the conformal map $\Psi: \cl{D^2}\setminus\{i\}\to \R^2_+$ given in \eqref{Psi}.
\hfill$\square$

\medskip

Similar to the proof of Proposition \ref{l:approx}, one can approzimate functions in $H^{\frac12,2}(\sph^1,\sph^1)$ through  their extension (recalling that $H^{\frac{1}{2},2}(\sph^1,\mathbb{C})$ is the space of traces of functions in $H^{1,2}(D^2,\mathbb{C})$), as in the following lemma, which is a special case of \cite[Lemma 8 and Lemma A.13]{BN}.

\begin{prop}[\cite{BN}]\label{l:approx2} Given $U\in H^{1,2}(D^2,\mathbb{C})$ such that $U(\partial D^2)\subset\sph^1$, there exists a sequence $(U_j)\subset C^\infty(\overline{D^2},\mathbb{C})$ such that $U_j(\partial D^2)\subset \sph^1$ and $U_j\to U$ in $H^{1,2}(D^2,\sph^1)$.
\end{prop}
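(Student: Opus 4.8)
The final statement to prove is Proposition \ref{l:approx2}, which asserts that smooth maps $U_j \in C^\infty(\overline{D^2},\mathbb{C})$ with $U_j(\partial D^2)\subset\sph^1$ are dense (in $H^{1,2}$) among maps $U\in H^{1,2}(D^2,\mathbb{C})$ with $U(\partial D^2)\subset\sph^1$. This is attributed to \cite{BN}, so the proof should be a streamlined adaptation of that argument.

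\bigskip

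The plan is as follows. First I would reduce to the case where the trace $u:=U|_{\partial D^2}\in H^{1/2,2}(\sph^1,\sph^1)$ is itself smooth (or at least Lipschitz) with values in $\sph^1$: by Proposition \ref{l:approx}, pick smooth maps $v_k\in C^\infty(\sph^1,\sph^1)$ with $v_k\to u$ in $H^{1/2,2}(\sph^1,\sph^1)$, and let $V_k\in H^{1,2}(D^2,\mathbb{C})$ be (say) the harmonic extensions of $v_k$; these converge to the harmonic extension $\tilde u$ of $u$ in $H^{1,2}(D^2,\mathbb{C})$. The interior values of $V_k$ need not lie in $\sph^1$, but that is not required. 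Since we may then replace $U$ by $V_k$ up to an arbitrarily small $H^{1,2}$-error, and since the difference $U-\tilde u$ has zero trace and can be approximated in $H^{1,2}$ by maps in $C^\infty_c(D^2,\mathbb{C})$ (which do not affect the boundary), it suffices to handle the case where $U$ has a smooth $\sph^1$-valued trace $v\in C^\infty(\sph^1,\sph^1)$ and $U\in H^{1,2}(D^2,\mathbb{C})$ is arbitrary with that trace.

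\bigskip

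Next, the main step. Given such $U$ with smooth trace $v$, first mollify in the interior: for small $\delta>0$, the rescaled map $U_\delta(z):=U((1-\delta)z)$ is defined and $H^{1,2}$ on a neighborhood of $\overline{D^2}$, converges to $U$ in $H^{1,2}(D^2)$ as $\delta\to0$, but has trace $v_\delta:=U((1-\delta)\,\cdot\,)|_{\sph^1}$ which is smooth but no longer $\sph^1$-valued. Then convolve $U_\delta$ with a standard mollifier to get a genuinely smooth $\widetilde U\in C^\infty(\overline{D^2},\mathbb{C})$, still close to $U$ in $H^{1,2}$ and with smooth trace $\widetilde v$ close to $v$ in $C^0(\sph^1)$, in particular with $|\widetilde v|\ge \tfrac12$ everywhere. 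Now correct the boundary: using that $v\in C^\infty$ is $\sph^1$-valued while $\widetilde v$ is merely $C^0$-close to it, interpolate in a collar $\{1-\eta<|z|\le1\}$ between $\widetilde U$ (on the inner circle) and $v$ (on $\sph^1$) by a smooth homotopy, e.g. $z\mapsto \chi(|z|)\,\widetilde U\big(\tfrac{z}{|z|}(1-\eta)\big) + (1-\chi(|z|))\,v\big(\tfrac{z}{|z|}\big)$ suitably radially reparametrized, where $\chi$ is a cutoff. Since $\|\widetilde v - v\|_{C^0}$ is small and $|v|\equiv1$, the resulting map stays bounded away from $0$ on the collar, and projecting it onto $\sph^1$ via $w\mapsto w/|w|$ on the collar produces a map $\widehat U$ that is smooth on $\overline{D^2}$, agrees with a small perturbation of $\widetilde U$ inside, and has trace exactly $v\in\sph^1$. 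The energy of the collar piece is $O(\eta^{-1}\|\widetilde v-v\|_{C^0}^2 + \eta)$ (the first term from the $\nabla\chi$ contribution, the second from the tangential derivatives), which can be made arbitrarily small by first fixing $\eta$ small, then $\delta$ and the mollification scale small; the projection $w\mapsto w/|w|$ has bounded gradient where $|w|\ge\tfrac12$ and so does not spoil the estimate. This yields the desired $\sph^1$-boundary-valued smooth approximant.

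\bigskip

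The main obstacle is the boundary correction: one must control the Dirichlet energy of the interpolation collar while simultaneously arranging that the interpolant never vanishes (so that the radial projection is admissible and smooth). The balancing of the two scales $\eta$ (collar width) and $\|\widetilde v-v\|_{C^0}$ (mollification error) is the crux — choosing $\eta$ too small makes the $\nabla\chi$ term blow up, too large makes the tangential term too big — but since $\|\widetilde v-v\|_{C^0}\to0$ faster than any power once $\eta$ is fixed, a standard diagonal choice works. Everything else (interior mollification, reduction to smooth trace via Proposition \ref{l:approx}, the zero-trace correction) is routine. One could alternatively invoke \cite[Lemma 8 and Lemma A.13]{BN} directly, which is what the statement of Proposition \ref{l:approx2} in fact does; I would present the self-contained argument above for completeness, or simply cite \cite{BN} as the excerpt already does.
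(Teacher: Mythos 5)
The paper does not actually prove this proposition: it is stated with the citation [BN] and justified only by reference to Lemmas 8 and A.13 there, so the only "official" route is that citation. Measured against a self-contained argument, your proposal has two issues, one of omission and one of substance. First, your reduction step already contains a complete proof, and you do not seem to notice it: writing $U=\tilde u+W$ with $\tilde u$ the harmonic extension of the trace $u$ and $W\in H^{1,2}(D^2,\mathbb{C})$ of zero trace, you have $W\in H^{1,2}_0(D^2,\mathbb{C})$ (zero trace on a Lipschitz domain), hence approximable by $W_m\in C^\infty_c(D^2,\mathbb{C})$; and the harmonic extensions $V_k$ of the smooth approximants $v_k\in C^\infty(\sph^1,\sph^1)$ given by Proposition \ref{l:approx} are smooth on all of $\overline{D^2}$ (smooth boundary data on the disk), converge to $\tilde u$ in $H^{1,2}$ by \eqref{equivnormsS1} and the $L^2$-boundedness of the Poisson extension, and have $\sph^1$-valued traces $v_k$. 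So $U_j:=V_{k_j}+W_{m_j}$ finishes the proof with no boundary surgery at all; the problem is easy precisely because the interior values are unconstrained in $\mathbb{C}$.

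Second, the "main step" you then carry out instead is not only superfluous but has a genuine gap where you assert that the mollified map $\widetilde U$ has trace $\widetilde v$ close to $v$ in $C^0(\sph^1)$, "in particular $|\widetilde v|\ge\frac12$". A general $U\in H^{1,2}(D^2,\mathbb{C})$ is unbounded (even with smooth trace), so averages of $U$ over small balls near $\partial D^2$ need not converge uniformly to the trace; uniform closeness holds only if the mollification radius is coupled to the distance $\delta$ to the boundary and one invokes a scaled Poincar\'e--trace inequality controlling $\fint_{B_\epsilon(x)\cap D^2}|U-\fint U|$ and $\fint_{B_\epsilon(x)\cap\partial D^2}|u-\fint U|$ by $\bigl(\int_{B_{2\epsilon}(x)}|\nabla U|^2\,dxdy\bigr)^{1/2}$, which tends to $0$ uniformly in $x$ by absolute continuity of the Dirichlet integral. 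This VMO-type estimate is exactly the nontrivial content of the [BN] lemmas being cited (and of Lemma \ref{l:approx0} in the one-dimensional setting); without it the collar interpolant is not known to stay away from $0$ and the radial projection step collapses. If you keep the collar construction you must supply this estimate and fix the order of the scales; otherwise, simply complete your first step as above.
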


Via a conformal map from $D$ to $\R^2_+$, one obtains

\begin{prop}\label{l:approx3} Given $U\in H^{1,2}(\R^2_+,\mathbb{C})$ such that $U(\R\times\{0\})\subset\sph^1$, there exists a sequence $(U_j)\subset C^\infty(\overline{R^2_+},\mathbb{C})$ such that $U_j(\R\times\{0\})\subset \sph^1$ and $\nabla U_j\to \nabla U$ in $L^2(\R^2_+,\sph^1)$.
\end{prop}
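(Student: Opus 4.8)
The plan is to deduce Proposition~\ref{l:approx3} from Proposition~\ref{l:approx2} by conjugating with the conformal map $\Psi:\overline{D^2}\setminus\{i\}\to\overline{\R^2_+}$ of \eqref{Psi} and its inverse $\Phi$ in \eqref{Phi}. First I would pull $U$ back to the disk: set $\hat U:=U\circ\Psi$ on $D^2\setminus\{i\}$. Since $\Psi$ is conformal, the chain rule together with the conformal invariance of the Dirichlet integral gives $\nabla\hat U\in L^2(D^2\setminus\{i\},\mathbb{C})$ with
$$\|\nabla\hat U\|_{L^2(D^2)}=\|\nabla U\|_{L^2(\R^2_+)}<\infty,$$
and, because $\Psi$ maps $\partial D^2\setminus\{i\}=\sph^1\setminus\{i\}$ onto $\R\times\{0\}$, the trace of $\hat U$ on $\partial D^2$ takes values in $\sph^1$.

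Before applying Proposition~\ref{l:approx2} I would check that the isolated singularity at $i$ is harmless, i.e. $\hat U\in H^{1,2}(D^2,\mathbb{C})$. As $\{i\}$ has zero $H^{1,2}$-capacity, it suffices to verify $\hat U\in L^2(D^2)$. Near $i$ one has $|\Psi(z)|\sim c|z-i|^{-1}$, so after the change of variables $w=\Psi(z)$ (note $|\Phi'(w)|\sim c'|w|^{-2}$ as $|w|\to\infty$) this reduces to $\int_{\{|w|>R\}}|U(w)|^2|w|^{-4}\,dw<\infty$. The latter follows from a logarithmic growth bound for $U\in H^{1,2}(\R^2_+)$ on dyadic annuli $A_k:=\{2^k<|w|<2^{k+1}\}$, namely $\|U\|_{L^2(A_k)}\lesssim 2^k\big(\|\nabla U\|_{L^2(A_{k-1}\cup A_k\cup A_{k+1})}+\sqrt{k}\big)$, obtained by iterating the Poincar\'e inequality on annuli; summing $2^{-4k}\|U\|_{L^2(A_k)}^2$ over $k$ then gives the claim. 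This is the only point that is not a plain change of variables; alternatively one may note that the proof of Proposition~\ref{l:approx2} in \cite{BN} only uses $\nabla U\in L^2$ and the boundary condition.

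Having $\hat U\in H^{1,2}(D^2,\mathbb{C})$ with $\hat U(\partial D^2)\subset\sph^1$, Proposition~\ref{l:approx2} yields $\hat U_j\in C^\infty(\overline{D^2},\mathbb{C})$ with $\hat U_j(\partial D^2)\subset\sph^1$ and $\hat U_j\to\hat U$ in $H^{1,2}(D^2,\mathbb{C})$. I would then push the approximants forward: $U_j:=\hat U_j\circ\Phi$ on $\overline{\R^2_+}$. Since $\Phi$ is a M\"obius transformation, hence smooth on all of $\overline{\R^2_+}$ with $\Phi(\overline{\R^2_+})\subset\overline{D^2}$ (and $\Phi(\infty)=i$), and $\hat U_j\in C^\infty(\overline{D^2},\mathbb{C})$, one gets $U_j\in C^\infty(\overline{\R^2_+},\mathbb{C})$ with $U_j(\R\times\{0\})=\hat U_j(\sph^1\setminus\{i\})\subset\sph^1$. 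Finally, conformal invariance of the Dirichlet integral gives
$$\|\nabla U_j-\nabla U\|_{L^2(\R^2_+)}=\|\nabla\hat U_j-\nabla\hat U\|_{L^2(D^2)}\longrightarrow 0,$$
which is the assertion. The main obstacle is precisely the square-integrability of $U\circ\Psi$ near $i$ discussed in the second step; once that is in place, everything else is a routine change of variables combined with conformal invariance.
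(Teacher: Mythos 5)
Your proposal is correct and follows exactly the route the paper takes, which is simply the one-line remark ``via a conformal map from $D^2$ to $\R^2_+$'' applied to Proposition~\ref{l:approx2}. In fact you supply the one genuinely nontrivial detail the paper leaves implicit, namely that $U\circ\Psi$ is square-integrable near $i$ so that it really lies in $H^{1,2}(D^2,\mathbb{C})$, and your dyadic-annulus Poincar\'e argument for this is sound.
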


\subsection{Two examples}

\begin{lem}\label{l:example1}
Let $\vp\in C^0_c(\R)\cap C^\infty(\R\setminus \{0\})$  be such that $\varphi(x)=0$ for $|x|\ge \frac12$ and for $x=0$,  and  
 $$\vp(x)=\frac{1}{\sqrt{\log\frac{1}{|x|}}},\quad \text{for } 0<|x|\leq\frac14.$$
 Then $\varphi\in H^{\frac12,2}(\R)$,
but $\vp\chi_{[0,\infty)}\not\in H^{\frac12,2}(\R)$.
\end{lem}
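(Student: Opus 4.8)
The plan is to verify the two claims separately. For the first claim, that $\varphi\in H^{\frac12,2}(\R)$, I would use the Gagliardo seminorm and the logarithmic scaling of $\varphi$. The key observation is that $\varphi$ is supported in $[-\frac12,\frac12]$, smooth away from $0$, and near $0$ behaves like $(\log\frac1{|x|})^{-1/2}$. I would split the double integral $\int\!\int \frac{|\varphi(x)-\varphi(\xi)|^2}{(x-\xi)^2}\,dx\,d\xi$ into the region away from the origin (where $\varphi$ is Lipschitz with compact support, hence the contribution is finite by standard estimates) and a neighborhood of $0$. On the diagonal near $0$, using $|\varphi'(x)|\sim \frac{1}{|x|(\log\frac1{|x|})^{3/2}}$ and the elementary bound $|\varphi(x)-\varphi(\xi)|\le |x-\xi|\sup_{[\xi,x]}|\varphi'|$ for $|x-\xi|$ small compared to $|x|$, one estimates the near-diagonal part; the off-diagonal part near $0$ is handled by $|\varphi(x)-\varphi(\xi)|\le |\varphi(x)|+|\varphi(\xi)|$ and the convergence of $\int_0^{1/4}\frac{\varphi(x)^2}{x}\,dx=\int_0^{1/4}\frac{dx}{x\log\frac1x}$ — wait, this last integral \emph{diverges}, so one must be more careful and exploit cancellation. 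The correct approach is to compute directly: for $0<|x|,|\xi|\le \frac14$ with, say, $|\xi|\le |x|$, write $|\varphi(x)-\varphi(\xi)|^2 \le C\frac{(\log|x/\xi|)^2}{(\log\frac1{|x|})(\log\frac1{|\xi|})^2}$ using the mean value theorem applied to $t\mapsto (\log\frac1t)^{-1/2}$ between $|x|$ and $|\xi|$, which gives a gain that makes $\iint$ converge. I expect this estimate, tracking the logarithmic factors carefully, to be the main technical step.

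For the second claim, that $\psi:=\varphi\chi_{[0,\infty)}\notin H^{\frac12,2}(\R)$, the idea is that cutting off at $0$ destroys the cancellation between the two sides that made $\varphi$ admissible. Concretely, I would estimate $[\psi]_{H^{\frac12,2}(\R)}^2$ from below by restricting the double integral to $x\in(0,\frac14)$ and $\xi\in(-\frac14,0)$, where $\psi(x)=\varphi(x)=(\log\frac1x)^{-1/2}$ and $\psi(\xi)=0$. This gives
$$[\psi]_{H^{\frac12,2}}^2 \ge 2\int_0^{1/4}\int_{-1/4}^{0}\frac{\varphi(x)^2}{(x-\xi)^2}\,d\xi\,dx = 2\int_0^{1/4}\varphi(x)^2\left(\frac1x - \frac{1}{x+\frac14}\right)dx \ge c\int_0^{1/4}\frac{\varphi(x)^2}{x}\,dx = c\int_0^{1/4}\frac{dx}{x\log\frac1x},$$
and the last integral diverges (its antiderivative is $-\log\log\frac1x$). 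Hence $\psi\notin H^{\frac12,2}(\R)$.

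The contrast between the two parts is the whole point: in $[\varphi]_{H^{\frac12,2}}^2$ the contribution from $x>0>\xi$ is matched and partly cancelled by the symmetric contribution, so one only sees the \emph{difference} $|\varphi(x)-\varphi(\xi)|$, which is logarithmically small when $x$ and $\xi$ are comparable in size; but once we multiply by $\chi_{[0,\infty)}$, the function jumps from $\varphi(x)$ to $0$ across the origin, and the full size $|\varphi(x)|$ (not just an increment) enters the integral, producing the divergent $\int \frac{dx}{x\log\frac1x}$. The main obstacle is the careful bookkeeping of logarithms in the positive part (the first claim); the negative part is a short lower-bound computation.
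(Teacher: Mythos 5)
Your proof of the second claim is exactly the paper's: both restrict the double integral to $x>0>\xi$ near the origin, integrate out $\xi$ to get $\frac1x-\frac{1}{x+1/4}\ge\frac{1}{2x}$, and land on the divergent $\int_0^{1/4}\frac{dx}{x\log\frac1x}$. For the first claim you take a genuinely different route. The paper never touches the Gagliardo double integral: it extends $\varphi$ to a function $\Phi$ on $\overline{\R^2_+}$ equal to $\big(\log\frac{1}{|(x,y)|}\big)^{-1/2}$ near the origin, notes that $|\nabla\Phi(z)|\sim\frac{1}{|z|(\log\frac{1}{|z|})^{3/2}}$, so that $\int_{B_{1/4}\cap\R^2_+}|\nabla\Phi|^2\,dxdy\sim\int_0^{1/4}\frac{dr}{r(\log\frac1r)^{3}}<\infty$, and then invokes \eqref{equivnorms} together with the minimality of the Dirichlet energy of the Poisson extension to conclude $\varphi\in H^{\frac12,2}(\R)$ in three lines. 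Your direct estimate also works, and the pointwise bound you propose is correct and essentially sharp: for $0<s\le t\le\frac14$, writing $a=\log\frac1t\le b=\log\frac1s$, one has
\[
\varphi(t)-\varphi(s)=a^{-1/2}-b^{-1/2}=\frac{b-a}{\sqrt{ab}\,\left(\sqrt a+\sqrt b\right)}\le\frac{\log(t/s)}{(\log\frac1t)^{1/2}\log\frac1s}.
\]
What you leave unverified is that the resulting double integral actually converges, and this is the only delicate point of your route: the substitution $t=e^{-a}$, $s=e^{-a-c}$ turns the integral over $0<s\le t\le\frac14$ into $\int_{\log 4}^\infty\int_0^\infty\frac{c^2}{a(a+c)^2}\cdot\frac{e^{-c}}{(1-e^{-c})^2}\,dc\,da$, whose inner integral is $O(a^{-3})$ (split at $c=1$ and $c=a$; near $c=0$ the kernel $\sim c^{-2}$ is cancelled by the $c^2$ in the numerator), so the whole expression is finite. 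Note that a cruder bound such as $\frac{c^2}{(a+c)^2}\le1$ for $c\ge1$ would only give $O(a^{-1})$ and a divergent outer integral, so the logarithmic bookkeeping you flag genuinely matters. The extension argument buys you freedom from all of this at the price of invoking the trace characterization of $H^{\frac12,2}$; your argument is self-contained on the line but needs the convergence check carried out to be complete.
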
 
 
\begin{proof}
To prove that $\vp\in H^{\frac12,2}(\R)$ we consider an extension $\Phi\in C^\infty_c(\bar\R^2_+\setminus\{0\} )$ with
$$\Phi(x,y)=\frac{1}{\sqrt{\log\frac{1}{|(x,y)|}}},\quad \text{for } 0<|(x,y)|\leq\frac14,\ y\ge 0.$$
Then $\nabla\Phi\in L^2(B_\frac14\cap \R^2_+)$, and $\Phi\in H^{1,2}(\R^2_+)$ follows by smoothness and compact support. Then $\vp\in H^{\frac12,2}(\R)$ by \eqref{equivnorms}, and $\|\nabla \tilde \varphi\|_{L^2}\le \|\nabla \Phi\|_{L^2}$, where $\tilde \varphi$ is the Poisson harmonic extension of $\varphi$.

To show that $\vp\chi_{[0,\infty)}\not\in H^{\frac12,2}(\R)$ we compute for any $0<\ve\le \frac{1}{4}$
\begin{equation}\label{stimavp}
\begin{split}
\int_{x=0}^\ve\int_{y=-\ve}^0\frac{(\vp(x)-\vp(y))^2}{(x-y)^2}dydx&=\int_0^\ve\left(\log\frac1x\right)^{-1} \int_{-\ve}^0\frac{dy}{(y-x)^2}dx \\ &=\int_0^\ve\left(\log\frac1x\right)^{-1}\left(\frac{1}{x}-\frac{1}{x+\ve}\right)dx=\infty,
\end{split}
\end{equation}
so that $[\vp\chi_{[0,\infty)}]_{H^{\frac12,2}}=+\infty$.
\end{proof}

\begin{prop}\label{example1} Let $\varphi\in H^{\frac12,2}(\R)$ be defined as in Lemma \ref{l:example1}. Then $u:= e^{i\varphi\circ \Pi_-}\in H^{\frac12,2}(\sph^1,\sph^1)\cap C^0(\sph^1,\sph^1)$, while the function
\[
w(e^{i\theta}):=
\begin{cases}
u(e^{i\theta}) &\text{for }\frac{\pi}{2}\le \theta\le \frac{3\pi}{2}\\
1 & \text{for} -\frac{\pi}{2}< \theta < \frac{\pi}{2}
\end{cases}
\]
belongs to $C^0(\sph^1,\sph^1)$ but not to $H^{\frac12,2}(\sph^1,\sph^1)$.
\end{prop}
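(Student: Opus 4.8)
The plan is to verify the three claimed memberships separately. First I would show $u = e^{i\varphi\circ\Pi_-} \in C^0(\sph^1,\sph^1)$: this is immediate since $\varphi\in C^0(\R)$, $\Pi_-$ is continuous on $\sph^1\setminus\{-i\}$, and $\varphi$ is compactly supported so $\varphi\circ\Pi_-$ extends continuously by $0$ at the South pole $-i$; composing with $e^{i\cdot}$ gives a continuous map into $\sph^1$. For $u\in H^{\frac12,2}(\sph^1,\sph^1)$, I would use that the map $x\mapsto e^{i\varphi(x)} - 1$ differs from $\varphi$ by a term that is quadratically controlled ($|e^{it}-1-it|\le Ct^2$), so that $e^{i\varphi}\in \dot H^{\frac12,2}(\R,\mathbb C)$ follows from $\varphi\in H^{\frac12,2}(\R)$ together with a chain-rule/Gagliardo estimate $[e^{i\varphi}]_{H^{\frac12,2}}\le [\varphi]_{H^{\frac12,2}}$ (valid since $x\mapsto e^{ix}$ is $1$-Lipschitz); then conformal invariance \eqref{equivnorms}, \eqref{equivnormsS1} transfers this to $\sph^1$ via $\Pi_-$ (the analogue of the energy identity used at the end of the proof of Proposition \ref{l:approx}).

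Next, continuity of $w$: on the closed arc $\{\frac\pi2\le\theta\le\frac{3\pi}{2}\}$, $w$ agrees with the continuous map $u$, and at the two endpoints $e^{\pm i\pi/2}=\pm i$ we have $\Pi_-(\pm i)$ finite with $|\Pi_-(\pm i)| = 1 \ge \frac14$ wait — more carefully, $\Pi_-$ maps $\pm i$ to $\pm 1$, and $\varphi(\pm1)=0$ since $\varphi(x)=0$ for $|x|\ge\frac12$; hence $u(\pm i)=e^{i\cdot 0}=1$, which matches the constant $1$ on the complementary arc. So $w$ is continuous.

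The main obstacle — and the heart of the proposition — is showing $w\notin H^{\frac12,2}(\sph^1,\sph^1)$. The point is that near $\theta=\frac\pi2$ (i.e. near the point $i\in\sph^1$, which corresponds under $\Pi_-$ to $x=1$) the function $w$ behaves like $e^{i\varphi\chi_{[0,\infty)}}$ composed with a smooth diffeomorphism, up to a translation putting the singular point at $0$. Here is where one invokes Lemma \ref{l:example1}: the cut-off at $\theta=\frac\pi2$ creates precisely the one-sided truncation $\varphi\chi_{[0,\infty)}$ at the corresponding parameter value. Concretely, I would pick a small arc neighborhood $J$ of $e^{i\pi/2}$ on which the relevant coordinate change is bi-Lipschitz, write out the Gagliardo double integral $[w]^2_{H^{\frac12,2}(J\times J)}$, change variables to the real line to reduce to a one-sided integral of the form $\int_0^\varepsilon\int_{-\varepsilon}^0 \frac{|e^{i\varphi(x)}-1|^2}{(x-y)^2}\,dy\,dx$ (the values of $w$ on the two sides of $\frac\pi2$ being $e^{i\varphi}$ on one side and the constant $1$ on the other), and observe $|e^{i\varphi(x)}-1|^2 \asymp \varphi(x)^2 = (\log\frac1x)^{-1}$ for small $x>0$. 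Then the exact computation \eqref{stimavp} from Lemma \ref{l:example1} gives divergence: $\int_0^\varepsilon (\log\frac1x)^{-1}(\frac1x - \frac1{x+\varepsilon})\,dx = \infty$. Care is needed to ensure the bi-Lipschitz change of variables only distorts the kernel $|e^{i\theta}-e^{it}|^{-2}$ by bounded factors (as in the estimates \eqref{claimgamma0}–\eqref{claimgamma} of Proposition \ref{p:degjump}), and that the contributions away from the singular point are finite — but the latter is clear since $w$ is Lipschitz there and $u\in H^{\frac12,2}$ handles the rest. Thus $[w]_{H^{\frac12,2}(\sph^1)}=+\infty$, completing the proof.
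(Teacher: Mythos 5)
Your overall strategy coincides with the paper's: transfer $[\varphi]_{H^{\frac12,2}(\R)}<\infty$ to $u$ through the change of variables $\Pi_-$ (using that $z\mapsto e^{iz}$ is $1$-Lipschitz), and reduce the divergence of $[w]_{H^{\frac12,2}(\sph^1)}$ to the one-sided integral \eqref{stimavp} of Lemma \ref{l:example1}. However, you miscompute the stereographic projection: since $\Pi_-(x+iy)=\frac{x}{1+y}$, one has $\Pi_-(i)=0$, while $\Pi_-$ is undefined at the projection centre $-i$; it is \emph{not} true that $\Pi_-(\pm i)=\pm1$. This matters, because the whole point of the example is that the cut at $\theta=\frac{\pi}{2}$ sits exactly at $i=\Pi_-^{-1}(0)$, i.e.\ at the point where $\varphi\circ\Pi_-$ carries the $(\log\frac{1}{|x|})^{-1/2}$ singularity: in the coordinate $x=\Pi_-(e^{i\theta})$ the arc $\theta\in(\frac{\pi}{2},\frac{3\pi}{2})$ corresponds to $x<0$, so $w\circ\Pi_-^{-1}=e^{i\varphi\chi_{(-\infty,0]}}$ and \eqref{stimavp} applies verbatim after the reflection $x\mapsto -x$, with no translation needed. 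Had the cut occurred at a point corresponding to $x=1$, where $\varphi$ vanishes identically in a neighbourhood, there would be no divergence at all. Correspondingly, continuity of $w$ at $i$ follows from $\varphi(0)=0$ (explicitly imposed in Lemma \ref{l:example1}), and at $-i$ from the compact support of $\varphi$, which forces $u\equiv1$ near the South pole; the values $\varphi(\pm1)=0$ are not the relevant ones. Once these coordinates are straightened out, your argument --- the lower bound $|e^{i\varphi(x)}-1|^2\ge c\,\varphi(x)^2=c\,(\log\frac{1}{x})^{-1}$, the bounded distortion of the kernel under the bi-Lipschitz change of variables, and the finiteness of the contributions away from the singular point --- is exactly the paper's proof.
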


\begin{proof} To prove that $u\in H^{\frac12,2}(\sph^1,\sph^1)$, it suffices to verify that
$$[u]_{H^{\frac{1}{2},2}(\sph^1)}^2= \frac{1}{\pi}\int_0^{2\pi}\int_0^{2\pi}  \frac{|u(e^{i\theta})-u(e^{it})|^2}{|e^{i\theta}-e^{it}|^2}dtd\theta<\infty,$$
which in turn, considering that $u$ is smooth away from $i\in \sph^1$, it reduces to verify that 
$$\int_{\frac{\pi}{2}-\ve}^{\frac{\pi}{2}+\ve}\int_{\frac{\pi}{2}-\ve}^{\frac{\pi}{2}+\ve}  \frac{|u(e^{i\theta})-u(e^{it})|^2}{|e^{i\theta}-e^{it}|^2}dtd\theta<+\infty,$$
for some $\ve>0$, which easily follows from
$$\int_{-1}^1\int_{-1}^1\frac{(\varphi(x)-\varphi(\xi))^2}{(x-\xi)^2}dxd\xi<+\infty$$
(Lemma \ref{l:example1}) and the fact that $\Pi_-$ is bi-Lipschitz from an arc of $\sph^1$ around $i$ and an interval in $\R$ around $0$.

Similarly, from \eqref{stimavp} it follows that $[w]_{H^{\frac12,2}(\sph^1,\sph^1)}=+\infty$, hence $w\not\in H^{\frac12,2}(\sph^1,\sph^1)$. The continuity of $u$ and $w$ follow easily by construction.
\end{proof}

\begin{lem}\label{l:example2} Define $\psi\in C^\infty(\R\setminus \{0\})$ such that
$$\psi(x)=\log\log\frac{1}{|x|}\quad \text{for }0<|x|<\frac14,$$
and $\psi(x)=0$ for $|x|\ge 1$. Then $\psi\in H^{\frac12,2}(\R)$.
\end{lem}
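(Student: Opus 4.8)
The plan is to mimic the proof of Lemma \ref{l:example1}, i.e.\ to exhibit an extension of $\psi$ lying in $H^{1,2}(\R^2_+)$ and then apply the equivalence \eqref{equivnorms}. First I would fix a function $\Psi\in C^\infty(\overline{\R^2_+}\setminus\{0\})$ with compact support such that $\Psi(x,0)=\psi(x)$ for all $x\in\R$ and
$$\Psi(x,y)=\log\log\frac{1}{|(x,y)|}\quad\text{for }0<|(x,y)|\le\tfrac14,\ y\ge 0;$$
this is possible since $\log\log\frac{1}{|(x,y)|}$ is a well-defined, positive smooth function on the punctured ball $B_{1/4}(0)\setminus\{0\}$ which can be glued smoothly to $0$ outside $B_1(0)$. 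Away from the origin $\Psi$ is smooth and compactly supported, hence in $H^{1,2}$; so the only point to verify is that $\nabla\Psi\in L^2(B_{1/4}^+)$, where $B_\rho^+:=B_\rho(0)\cap\R^2_+$.

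For this, writing $r:=|(x,y)|$, a direct computation gives $|\nabla\Psi(x,y)|=\frac{1}{r\log\frac1r}$ for $0<r\le\frac14$ (note $\log\frac1r>0$ there). Passing to polar coordinates and substituting $t=\log\frac1r$,
$$\int_{B_{1/4}^+}|\nabla\Psi|^2\,dx\,dy\le C\int_0^{1/4}\frac{dr}{r\left(\log\frac1r\right)^2}=C\int_{\log 4}^{\infty}\frac{dt}{t^2}<\infty.$$
Thus $\Psi\in H^{1,2}(\R^2_+)$, and by \eqref{equivnorms} its trace $\psi$ satisfies $\|(-\Delta)^{\frac14}\psi\|_{L^2(\R)}\le\|\nabla\Psi\|_{L^2(\R^2_+)}<\infty$. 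Since $\log\log\frac{1}{|x|}$ grows more slowly than any negative power of $|x|$ as $|x|\to 0$ and $\psi$ has compact support, we also have $\psi\in L^2(\R)$, so $\psi\in H^{\frac12,2}(\R)$.

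The computation is routine; the only thing worth noting is that the extra logarithm is precisely what makes the borderline integral $\int_0^{1/4}\frac{dr}{r(\log\frac1r)^2}$ convergent — after the natural substitution it behaves like $\int^{\infty}t^{-2}\,dt$ — which is the same mechanism that made $\varphi$ in Lemma \ref{l:example1} belong to $H^{\frac12,2}(\R)$ despite the singular behaviour of its extension's gradient. Hence I do not expect any real obstacle beyond keeping track of this convergence.
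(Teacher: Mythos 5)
Your proof is correct and follows essentially the same route as the paper: construct a smooth compactly supported extension equal to $\log\log\frac{1}{|(x,y)|}$ near the origin, verify $\nabla\Psi\in L^2(\R^2_+)$ (the paper leaves the computation $|\nabla\Psi|=\frac{1}{r\log\frac1r}$ and the convergence of $\int_0^{1/4}\frac{dr}{r(\log\frac1r)^2}$ implicit, which you supply), and conclude via \eqref{equivnorms} and the Dirichlet principle. No gaps.
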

\begin{proof}
Similar to Lemma \ref{l:example1} it suffices to construct a function $\Psi\in C^\infty(\bar\R^2)$ with $\Psi(x,y)\equiv 0$ for $|(x,y)|\ge 1$ and
$$\Psi(x,y)=\log\log\frac{1}{|(x,y)|},\quad \text{for }|(x,y)|\le\frac14, \ y\ge 0.$$
Then $\nabla \Psi\in L^2(\R^2_+)$, hence $\psi=\Psi|_{\R\times \{0\}}\in H^{\frac12,2}(\R)$.
\end{proof}

With the same proof as for Proposition \ref{example1} we obtain:

\begin{prop}\label{example2} Let $\psi$ be as in Lemma \ref{l:example2}. Then $v:=e^{i\psi\circ\Pi_-}\in H^{\frac12,2}(\sph^1,\sph^1)$.
\end{prop}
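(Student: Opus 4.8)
The plan is to follow the proof of Proposition~\ref{example1} almost verbatim, with $\varphi$ replaced by $\psi$ and $u=e^{i\varphi\circ\Pi_-}$ replaced by $v=e^{i\psi\circ\Pi_-}$. First I would observe that $v$ takes values in $\sph^1$ by construction, that $v$ is smooth on $\sph^1\setminus\{i\}$ (since $\psi\in C^\infty(\R\setminus\{0\})$ and $\Pi_-^{-1}(0)=i$), and in fact $v\equiv 1$ on a neighborhood of $-i$, where $\Pi_-$ is unbounded and $\psi$ vanishes. In view of the expression \eqref{equivnorms3} for the seminorm, proving $v\in H^{\frac12,2}(\sph^1,\sph^1)$ amounts to showing
$$\int_0^{2\pi}\int_0^{2\pi}\frac{|v(e^{i\theta})-v(e^{it})|^2}{|e^{i\theta}-e^{it}|^2}\,dt\,d\theta<\infty,$$
and since $v$ is bounded and smooth on the complement of any fixed arc around $i$, the part of this double integral over the region where $\theta$ or $t$ stays away from $\tfrac{\pi}{2}$ is finite; so it suffices to bound
$$\int_{\frac{\pi}{2}-\ve}^{\frac{\pi}{2}+\ve}\int_{\frac{\pi}{2}-\ve}^{\frac{\pi}{2}+\ve}\frac{|v(e^{i\theta})-v(e^{it})|^2}{|e^{i\theta}-e^{it}|^2}\,dt\,d\theta$$
for some small $\ve>0$.

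Next I would use that $\Pi_-$ restricts to a bi-Lipschitz map from an arc of $\sph^1$ around $i$ onto an interval $(-\delta,\delta)\subset\R$ around $0$, so that, after a change of variables, the integral above is controlled by a constant times $\int_{-\delta}^{\delta}\int_{-\delta}^{\delta}\frac{|e^{i\psi(x)}-e^{i\psi(\xi)}|^2}{(x-\xi)^2}\,dx\,d\xi$. Using the elementary bound $|e^{ia}-e^{ib}|\le|a-b|$ for $a,b\in\R$, this is at most $\int_{-\delta}^{\delta}\int_{-\delta}^{\delta}\frac{(\psi(x)-\psi(\xi))^2}{(x-\xi)^2}\,dx\,d\xi$, which is finite because $\psi\in H^{\frac12,2}(\R)$ by Lemma~\ref{l:example2} (indeed, by \eqref{equivnorms2} this double integral equals $2\pi\|(-\Delta)^{\frac14}\psi\|_{L^2}^2<\infty$). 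This completes the argument.

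I do not expect any genuine obstacle here, once Lemma~\ref{l:example2} is taken for granted. The only step requiring a little care is the reduction to a neighborhood of $i$, namely checking that the contribution of the double integral from pairs $(\theta,t)$ with at least one of the two bounded away from $\tfrac{\pi}{2}$ is finite; this is immediate since the integrand is then bounded on a set of finite measure, exactly as in the proof of Proposition~\ref{example1}. Note that, in contrast with Proposition~\ref{example1}, there is here nothing like the negative assertion ``$\varphi\chi_{[0,\infty)}\notin H^{\frac12,2}(\R)$'' to establish, since Proposition~\ref{example2} only asserts membership in $H^{\frac12,2}(\sph^1,\sph^1)$.
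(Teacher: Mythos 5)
Your argument is correct and is essentially identical to the paper's, which simply states that Proposition \ref{example2} follows ``with the same proof as for Proposition \ref{example1}'': reduce to a neighborhood of $i$ using boundedness and smoothness of $v$ away from $i$, transfer to the line via the bi-Lipschitz property of $\Pi_-$, bound $|e^{i\psi(x)}-e^{i\psi(\xi)}|$ by $|\psi(x)-\psi(\xi)|$, and invoke Lemma \ref{l:example2}. Your closing remark that continuity of $v$ plays no role here (unlike in Proposition \ref{example1}, where it is part of the statement) is also accurate.
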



\begin{thebibliography}{10}
\small

\bibitem{BMRS} \textsc{L. Berlyand, P. Mironescu, V. Rybalko, E. Sandier}, \emph{Minimax critical points in Ginzburg–Landau problems with semi-stiff boundary conditions: existence and bubbling}, Comm. Partial Differential Equations, \textbf{39}(5) (2014), 946-1005.



\bibitem{BGP} \textsc{A. Boutet de Monvel-Berthier, V. Georgescu, R. Purice}, \emph{A boundary value problem related to the Ginzburg–Landau model}, Comm. Math. Phys. \textbf{142} (1991) 1-23.

\bibitem{bre} \textsc{H. Brezis}, \emph{Degree theory: old and new}, in Topological Nonlinear Analysis II: Degree, Singularity and Variations, Frascati, 1995, (M. Matzeu and A. Vignoli ed.), Birkh\"auser, 1997, p. 87-108.

\bibitem{bc} \textsc{H. Brezis, J-M. Coron,} \emph{Large solutions for harmonic maps in two dimensions},  Comm. Math. Phys., \textbf{92} (1983), 203-215.

\bibitem{bc2} \textsc{H. Brezis, J-M. Coron,} \emph{Multiple solutions of H-systems and Rellich's conjecture},  Comm. Pure Appl. Math. \textbf{37} (1984), 149-187.


\bibitem{BM} \textsc{H. Brezis, P. Mironescu}, \emph{Sobolev maps to the circle}, Birkh\"auser, New York, 2021.

\bibitem{BN} \textsc{H. Brezis, L. Nirenberg}, \emph{Degree Theory and BMO; part I: Compact manifolds without boundaries}, Selecta Math. \textbf{1} (1995), 197-263.

\bibitem{DLMR} \textsc{F. Da Lio, L. Martinazzi, T. Rivi\`ere}, \emph{Blow-up analysis of a nonlocal Liouville-type equation}, Analysis \& PDE \textbf{8} (2015), 1757-1805.

\bibitem{DLR} \textsc{F. Da Lio, T. Rivi\`ere}, \emph{Three-commutator estimates and the regularity of $\frac{1}{2}$-harmonic maps into spheres}, Anal. PDE \textbf{4} (2011), 149-190.

\bibitem{DLR2} \textsc{F. Da Lio, T. Rivi\`ere}, \emph{Sub-criticality of non-local Schr\"odinger systems with antisymmetric potentials and applications to half-harmonic maps}, Adv. Math. \textbf{227} (2011), 1300-1348.


\bibitem{DLR3} \textsc{F. Da Lio, T. Rivi\`ere}, \emph{Horizontal $\alpha$-harmonic maps}, preprint (2016), \href{https://arxiv.org/pdf/1604.05461}{\texttt{arXiv:1604.05461}}. 

\bibitem{DLS} \textsc{F. Da Lio, A. Schikorra}, \emph{$n/p$-harmonic maps: Regularity for the sphere case}, Adv. Calc. Var. \textbf{7} (2014), no. 1, 1-26.


\bibitem{DPV} \textsc{E. Di Nezza, G. Palatucci, E. Valdinoci}, \emph{Hitchhiker’s guide to the fractional Sobolev spaces}, Bull. Sci. Math. \textbf{136} (2012), no. 5, 521-573.

\bibitem{Gagliardo} \textsc{E. Gagliardo}, \emph{Caratterizzazioni delle tracce sulla frontiera relative ad alcune classi di funzioni in $n$ variabili}, Rend. Sem. Mat. Univ. Padova \textbf{27} (1957), 284-305.

\bibitem{GSU} \textsc{T. Ghosh, M. Salo, G. Uhlmann}, \emph{The Calder\'on problem for the fractional Schr\"odinger equation}, Analysis \& PDE \textbf{13} (2020), no. 2, 455-475.

\bibitem{GH} \textsc{M. Giaquinta, S. Hildebrandt}, \emph{A priori estimates for harmonic mappings}, J. Reine Angew. Math. \textbf{336} (1982), 124-164.









\bibitem{IsoMar} \textsc{T. Isobe, A. Marini}, \emph{On topologically distinct solutions of the Dirichlet problem for Yang-Mills connections}, Calc. Var. \textbf{5} (1997), 345-358.


\bibitem{jost} \textsc{J. Jost}, \emph{The Dirichlet problem for harmonic maps from a surface with boundary into a $2$-sphere with nonconstant boundary values}, J. Diff. Geom. \textbf{19} (1984), 393-401.

\bibitem{kuw} \textsc{E. Kuwert}, \emph{Minimizing the energy of maps from a surface into a $2$-sphere with prescribed degree and boundary values}, Manuscripta Math. \textbf{83} (1994), 31-38.



\bibitem{Lem} \textsc{L. Lemaire}, \emph{Applications harmoniques de surfaces riemanniennes}, J. Diff. Geom., \textbf{13} (1978), 51-78.

\bibitem{ManMar} \textsc{G. Mancini, L. Martinazzi}, \emph{Extremals for fractional Moser-Trudinger inequalities in dimension $1$ via harmonic extensions and commutator estimates}, Adv. Nonlinear Stud. \textbf{20} (2020), 599-632.

\bibitem{MarRiv} \textsc{L. Martinazzi, T. Rivi\`ere}, \emph{Gluing instantons a` la Brezis-Coron in dimension four and the dipole construction}, preprint (2024), \href{https://arxiv.org/abs/2404.16426}{\texttt{arXiv:2404.16426}}

\bibitem{MazSch0} \textsc{K. Mazowiecka, A. Schikorra}, \emph{Fractional div-curl quantities and applications to nonlocal geometric equations}, J. Funct. Anal. \textbf{275} (2018), no. 1, 1-44.

\bibitem{MazSch} \textsc{K. Mazowiecka, A. Schikorra}, \emph{Minimal $W^{s,\frac{n}{s}}$-harmonic maps in homotopy classes}, J. Lond. Math. Soc. (2) \textbf{108} (2023), no. 2, 742-836.

\bibitem{MazSch2} \textsc{K. Mazowiecka, A. Schikorra}, \emph{s-stability for $W^{s,n/s}$-harmonic maps in homotopy groups}, preprint (2023), \texttt{arXiv:2308.14620}.


\bibitem{milpeg} \textsc{V. Millot, M. Pegon}, \emph{Minimizing $1/2$-harmonic maps into spheres}, Calc. Var. and Partial Differential Equations \textbf{59}, 55 (2020).

\bibitem{milpegsch} \textsc{V. Millot, M. Pegon, A. Schikorra}, \emph{Partial regularity for fractional harmonic maps into spheres}, Arch. Rational Mech. Anal. \textbf{242} (2021), 747-825.

\bibitem{milsir} \textsc{V. Millot, Y. Sire}, \emph{On a fractional Ginzburg-Landau equation and 1/2-harmonic maps into spheres}, Arch. Ration. Mech. Anal. \textbf{215} (2015), 125-210.

\bibitem{milsiryu} \textsc{V. Millot, Y. Sire, H. Yu}, \emph{Minimizing fractional harmonic maps on the real line in the supercritical regime}, Discrete Continuous Dyn. Syst. \textbf{38} (2018), 6195-6214.

\bibitem{mirpis} \textsc{P. Mironescu, A. Pisante}, \emph{A variational problem with lack of compactness for $H^{\frac12,2}(\sph^1;\sph^1)$ maps of prescribed degree}, J. Funct. Anal. \textbf{217} (2004), 249-279.

\bibitem{Prodi} \textsc{G. Prodi}, \emph{Tracce sulla frontiera delle funzioni di Beppo Levi}, Rend. Sem. Mat. di Padova \textbf{26} (1956), 36-60. 




\bibitem{Tau} \textsc{C. H. Taubes}, \emph{Path-connected Yang-Mills moduli spaces}, J. Differential Geom. \textbf{19} (1984), no.2, 337-392.


\end{thebibliography}
\end{document}